\documentclass[11pt]{amsart}
\usepackage{geometry}               
\geometry{a4paper}                   
\usepackage{amssymb,amsmath,latexsym}
\usepackage[english]{babel}
\usepackage{graphicx}
\usepackage[latin1]{inputenc}
\usepackage{float}
\usepackage[caption = false]{subfig}
\usepackage{xypic}
\usepackage{multirow}
\usepackage{verbatim}
\usepackage{tikz-cd}
\usetikzlibrary{cd}

\newtheorem{theorem}{Theorem}[section]
\newtheorem{lemma}[theorem]{Lemma}
\newtheorem{corollary}[theorem]{Corollary}
\newtheorem{proposition}[theorem]{Proposition}
\newtheorem{definition}[theorem]{Definition}

\usepackage[all]{xy}
\numberwithin{equation}{section}

\title{Fenchel-Nielsen coordinates for ${\rm SL}(3,{\mathbb C})$ representations}
\author{Rodrigo D\'avila Figueroa \& John R.\,Parker}
\date{\today}                                           

\begin{document}
\maketitle

\begin{abstract}
We define Fenchel-Nielsen coordinates for representations of surface groups to ${\rm SL}(3,{\mathbb C})$. We also show how
these coordinates relate to the classical Fenchel-Nielsen coordinates and to their generalisations by Kourouniotis, Tan, Goldman,
Zhang and Parker-Platis. 
\end{abstract}

\section{Introduction} 

The Teichm\"uller space of a closed, orientable Riemann surface $S_g$ of genus $g\geq2$ is the space of marked 
hyperbolic structures on $S_g$ 
up to isotopy. Fenchel and Nielsen constructed global coordinates on this space. The coordinates depend on a choice 
${\mathcal L}$ of $3g-3$ distinct, non-trivial homotopy classes of simple closed curves on the surface with disjoint 
representatives.
The coordinates are the set 
of hyperbolic lengths of geodesics in each homotopy class in ${\mathcal L}$, together with twists around these geodesics. 
An alternative description of the Teichm\"uller space of $S_g$ is the space of irreducible, discrete, totally loxodromic 
representations of $\pi_1(S_g)$ to 
${\rm PSL}(2,{\mathbb R})={\rm Isom}_+(S_g)$ up to conjugation. In this context, Fenchel-Nielsen length coordinates are 
equivalent to 
the $3g-3$ traces of the elements of ${\rm PSL}(2,{\mathbb R})$ representing the curves in ${\mathcal L}$ and the twist
coordinates are traces (or eigenvalues) of elements in their centralisers. 

The second definition of Techm\"uller space can be generalised to representations of $\pi_1(S_g)$ to other groups $G$ and
so Fenchel-Nielsen coordinates can be generalised as well. Particular cases where Fenchel-Nielsen coordinates have been 
constructed are when $G$ is one of ${\rm SL}(2,{\mathbb C})$, Kourouniotis \cite{Kou}, Tan \cite{Tan}; 
${\rm PSL}(3,{\mathbb R})$, Goldman \cite{Gol} or ${\rm SU}(2,1)$, Parker and Platis \cite{PP}. In the case of 
${\rm SL}(2,{\mathbb C})$ and ${\rm SU}(2,1)$ there is an important change from the case of ${\rm SL}(2,{\mathbb R})$, namely
the space of totally loxodromic representations is a proper subset of the corresponding compoenent of the representation variety.
Indeed, in the ${\rm SL}(2,{\mathbb C})$ case, the space of discrete, faithful, totally loxodromic representations has 
fractal boundary, and this boundary can intersect itself in complicated ways \cite{AndCan}. 
Therefore the allowable values of these coordinates describe a larger space containing the space of discrete, faithful,
totally loxodromic representations as a proper subspace. 

In this paper we construct Fenchel-Nielsen coordinates to the case where $G={\rm SL}(3,{\mathbb C})$. 
Both ${\rm SL}(3,{\mathbb R})$ and ${\rm SU}(2,1)$ are subgroups of ${\rm SL}(3,{\mathbb C})$ and, after taking the 
irreducible representation, we can embed ${\rm SL}(2,{\mathbb R})$ and ${\rm SL}(2,{\mathbb C})$ as subgroups of 
${\rm SL}(3,{\mathbb C})$ 
as well. We show how our coordinates are related to those constructed in \cite{Kou,Tan,Gol,PP}. Our motivation for considering
${\rm SL}(3,{\mathbb C})$ representations of surface groups is the study of complex Kleinian groups, which are discrete 
subgroups 
of ${\rm SL}(3,{\mathbb C})$; see the book \cite{C-N-S} by Cano, Navarrete, Seade. Much of the focus of \cite{C-N-S} and 
related papers has been
the case of elementary or Fuchsian groups. We hope that defining Fenchel-Nielsen coordinates for 
${\rm SL}(3,{\mathbb C})$ representations of surface groups will facilitate the study of their action on 
${\mathbb C}{\mathbb P}^2$, as studied in \cite{C-N-S}. 

\noindent
{\bf Acknowledgements.}
This paper is part of the PhD thesis of the first author. Both authors are grateful to Sean Lawton for helpful conversations.

\section{Statement of main results}

The Teichm\"uller space of a surface $S_g$ is defined as follows.

\begin{definition}\label{def-Teich}
	Let $S_g$ be a closed, compact surface of genus $g\geq2$. The Teichm\"uller space of $S_g$ is the quotient 
	$\mathcal{T}(S_g)=\{(X,f)\}/\sim$, where
	\begin{itemize}
		\item $X$ is $S_g$ together with a hyperbolic structure.
		\item $f:S_g\rightarrow X$ is a homeomorphism called a marking.
		\item $(X,f)\sim(Y,g)$ if and only if there exists an isometry $\phi:X\rightarrow Y$ such that $\phi\circ f$ is isotopic to $g$.
	\end{itemize}
\end{definition}

Our results depend on a choice of a curve system on $S_g$.

\begin{definition}\label{def-curve-system}
Let $S_g$ be a closed, oriented surface of genus $g\ge 2$. A {\sl curve system} on $S_g$ denoted
$\mathcal{L}=\{[\gamma_1],\,\ldots,\,[\gamma_{3g-3}]\}$ is a maximal set of distinct, non-trivial homotopy 
classes of simple closed curves in $S_g$ with simple, disjoint representatives $\gamma_j$.
The set $S_g\backslash\cup_{j=1}^{3g-3}\gamma_j$ is a decomposition of the 
surface into $2g-2$ pairs of pants (three holed spheres) $Y_1,\ldots,Y_{2g-2}$. Each curve 
$\gamma_j$ is the boundary of precisely two pairs of pants (including the case when a curve corresponds two different 
boundary curves of the same pair of pants).

When we consider $X=f(S_g)$, that is $S_g$ together with a hyperbolic 
structure, we always assume that $f$ sends $\gamma_j$ to the geodesic on $X$ in the homotopy class $[\gamma_j]$. 
By a mild abuse of notation, we call this image $\gamma_j$ as well. 
\end{definition}

%
%In this section we gather the main results together. Their statements depend on definitions which we will give in later sections.
%In each case $S_g$ is a closed oriented surface of genus $g\ge 2$ and 
%${\mathcal L}=\bigl\{[\gamma_1],\,\ldots,\,[\gamma_{3g-3}]\bigr\}$ is a curve system on $S_g$, as described in 
%Section~\ref{sec:geo-FN} FIX THIS. In particular, $S_g-{\mathcal L}$ is a disjoint union of three holed spheres
%$\{Y_1,\,\ldots,\,Y_{2g-2}\}$. 

We consider representations to ${\rm SL}(3,{\mathbb C})$ where $A_j=\rho([\gamma_j])$ is strongly loxodromic, that
is the eigenvalues of $A_j$ have pairwise different absolute values, see Theorem~\ref{thm-SL3C-class} below.
We are now in a position to give our main definition, which should be thought of as an extension to
${\rm SL}(3,{\mathbb C})$ of the classical definition of quasi-Fuchsian representations of surface groups.

\begin{definition}\label{def-FN-space}
%Let $G$ be a subgroup of ${\rm SL}(3,{\mathbb C})$ so that ${\rm SO}_0(2,1)$ is a subgroup of $G$. 
Let $S_g$ be a closed, oriented surface of genus $g\ge 2$ and let ${\mathcal L}$ be a curve system on $S_g$,
Let $[\gamma_j]$ for $j=1,\,\ldots,\, 3g-3$ and $Y_k$ for $k=1,\,\ldots,\,2g-2$ be as above.
Let ${\mathcal Y}(S_g,{\mathcal L})$ be the space of conjugacy classes of representations
$\rho:\pi_1(S_g)\longrightarrow {\rm SL}(3,{\mathbb C})$ so that
\begin{enumerate}
\item[(1)] for each $j=1,\,\ldots,\,3g-3$ the curve $\gamma_j$ is represented by a strongly loxodromic map 
$A_j=\rho([\gamma_j])$; and
\item[(2)] for each $k=1,\,\ldots,\,2g-2$ the restriction of $\rho$ to $\pi_1(Y_k)$ is irreducible.
\end{enumerate}
\end{definition}

Our first result gives coordinates on the representations in ${\mathcal Y}(S_g,{\mathcal L})$.
The relevant definitions may be found later in the paper, as follows: shape invariants in
Definition~\ref{def-shape-invariants}; commutator equations \eqref{eqn:Q-y} from 
Theorem~\ref{thm-lawton-new}; twist-bend and bulge-turn  parameters in Definition~\ref{def-t-b-b-t}.

\begin{theorem}\label{thm:main-SL(3,C)}
Let $S_g$ and ${\mathcal L}$ be as above. Let $\rho\in{\mathcal Y}(S_g,{\mathcal L})$ and write
%$\rho\in{\mathcal Q}\bigl(S_g,{\mathcal L}, {\rm SL}(3,{\mathbb C})\bigr)$ and write 
$\Gamma=\rho\bigl(\pi_1(S_g)\bigr)$.
For $j=1,\,\ldots,\,3g-3$ write $A_j=\rho([\gamma_j])$, write $(t+i\theta)_\Gamma(\gamma_j)$ 
for the complex twist-bend along $\gamma_j$ and write $(s+i\phi)_\Gamma(\gamma_j)$ 
for the complex bulge-turn along $\gamma_j$. For $k=1,\,\ldots,\,2g-2$ write 
$\sigma_+(Y_k)$ and $\sigma_-(Y_k)$ for the shape invariants of $Y_k$. 
Then $\Gamma$ is determined up to conjugation by
\begin{enumerate}
\item[(1)] the traces ${\rm tr}(A_1),\,\ldots,\,{\rm tr}(A_{3g-3})$ and ${\rm tr}(A_1^{-1}),\,\ldots,\,{\rm tr}(A_{3g-3}^{-1})$;
\item[(2)] the shape invariants $\sigma_+(Y_1),\,\ldots,\, \sigma_+(Y_{2g-2})$ and 
$\sigma_-(Y_1),\,\ldots,\, \sigma_-(Y_{2g-2})$; 
\item[(3)] the choice of a root of the commutator equations $Q(Y_1),\,\ldots,\,Q(Y_{2g-2})$; 
\item[(4)] the twist-bend parameters $(t+i\theta)_\Gamma(\gamma_1),\,\ldots,\,(t+i\theta)_\Gamma(\gamma_{3g-3})$
and the bulge-turn parameters $(s+i\phi)_\Gamma(\gamma_1),\,\ldots,\,(s+i\phi)_\Gamma(\gamma_{3g-3})$.
\end{enumerate}
\end{theorem}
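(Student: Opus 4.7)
The plan is to mirror the inductive construction of Section~\ref{sec:alg-FN} with ${\rm SL}(2,{\mathbb R})$ replaced by ${\rm SL}(3,{\mathbb C})$, tracking the additional parameters that appear at each step. Following the pants decomposition $S_g-{\mathcal L}=\bigsqcup_{k=1}^{2g-2}Y_k$, the argument splits into two parts: (a) for each $Y_k$ show that $\rho|_{\pi_1(Y_k)}$ is determined up to conjugation by the six boundary traces of its three boundary elements together with $\sigma_\pm(Y_k)$ and the choice of root of $Q(Y_k)$; and (b) once the pants representations are fixed, show that the ambiguity in amalgamating two pants across $\gamma_j$, or in closing a handle along $\gamma_j$, is exactly a two-complex-parameter family parametrised by $(t+i\theta)_\Gamma(\gamma_j)$ and $(s+i\phi)_\Gamma(\gamma_j)$. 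Once (a) and (b) are established, inducting over the gluing and handle-closing moves recovers $\Gamma$ up to conjugation exactly as in the ${\rm SL}(2,{\mathbb R})$ setting.

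For step (a) I would appeal to the ${\rm SL}(3,{\mathbb C})$ analogue of the Fricke--Vogt theorem: the character variety of $F_2=\langle A,B\rangle$ has complex dimension eight, and by Lawton's description it is cut out by a single polynomial relation in nine trace coordinates. Six of these coordinates are supplied by the boundary traces ${\rm tr}(A^{\pm 1})$, ${\rm tr}(B^{\pm 1})$, ${\rm tr}(C^{\pm 1})$ with $C=(BA)^{-1}$, the remaining two are the shape invariants $\sigma_\pm(Y_k)$, and the root of the resulting quadratic $Q(Y_k)$ selects one of the two sheets of the character variety above this data. Clause (2) of the definition of ${\mathcal D}(S_g,{\mathcal L},{\rm SL}(3,{\mathbb C}))$ guarantees that $\rho|_{\pi_1(Y_k)}$ is irreducible, so by Procesi's theorem the character determines the representation up to conjugation. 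For step (b), the key observation is that a strongly loxodromic $A\in{\rm SL}(3,{\mathbb C})$ has three pairwise distinct eigenvalues, so its centraliser $Z(A)\cong({\mathbb C}^*)^2$ is the maximal torus diagonal in the eigenbasis of $A$. After normalising $A_j'=A_j^{-1}$, the gluing ambiguity is conjugation of $\rho|_{\pi_1(Y')}$ by an arbitrary $K_j\in Z(A_j)$, and after imposing $\det K_j=1$ the two remaining eigenvalue ratios---those attached to ${\bf v}_+(A_j)$ and to the intermediate eigenvector---are packaged as $e^{(t+i\theta)_\Gamma(\gamma_j)/2}$ and $e^{(s+i\phi)_\Gamma(\gamma_j)/2}$. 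Independence of the choice of side of $\gamma_j$, and the parallel handle-closing case in which the conjugating map $D_j$ is defined modulo right multiplication by $Z(A_j)$, both go through by the same centraliser manipulation used in Section~\ref{sec:alg-FN}.

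The main obstacle is step (a). For ${\rm SL}(2,{\mathbb R})$ the three boundary traces alone determine an irreducible pants representation up to conjugation, but in ${\rm SL}(3,{\mathbb C})$ the pants character variety is eight dimensional and there is a genuine two-to-one ambiguity even after fixing the boundary traces and the shape invariants. Producing $Q(Y_k)$ explicitly as a quadratic whose coefficients are polynomial in the boundary traces and the shape invariants, verifying that its two roots give non-conjugate pants representations inside ${\mathcal D}(S_g,{\mathcal L},{\rm SL}(3,{\mathbb C}))$, and checking that the correct root can be selected by continuous deformation from a representative Fuchsian representation in ${\mathcal T}(S_g,{\rm SO}_0(2,1))$ using clause (3) of the definition of ${\mathcal D}(S_g,{\mathcal L},{\rm SL}(3,{\mathbb C}))$, will be the technically delicate heart of the proof. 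Once this is in hand, the centraliser analysis of step (b) is routine and the inductive assembly over the amalgamation and HNN moves yields the theorem.
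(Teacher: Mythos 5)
Your plan is essentially the paper's proof: step (a) is Lawton's theorem, which the paper symmetrises by trading $\mathrm{tr}(A^{-1}B)$ and $\mathrm{tr}(B^{-1}A)$ for the shape invariants $\sigma_\pm$ so that the quadratic $Q(Y_k)$ has coefficients polynomial in the six boundary traces and $\sigma_\pm$ (Theorem~\ref{thm-lawton-new}), and step (b) is the paper's parametrisation of the centraliser $Z(A_j)\cong(\mathbb{C}^*)^2$ of a strongly loxodromic element by the twist-bend and bulge-turn eigenvalue data, with the same amalgamation/HNN assembly. The ``delicate heart'' you flag is already supplied by Lawton's explicit polynomials $S$ and $P$ together with the symmetrisation via Lemma~\ref{Lem2.5}, and the determination statement only needs a choice of root of $Q$, not that the two roots give non-conjugate representations.
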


Our main interest is in the space of conjugacy classes of discrete, faithful, totally strongly loxodromic representations
of $\pi_1(S_g)$ to ${\rm SL}(3,{\mathbb C})$, or to Lie subgroups $G$ of ${\rm SL}(3,{\mathbb C})$. Clearly, such
representations form a proper subset of ${\mathcal Y}(S_g,{\mathcal L})$. There are several reasons for this. 
The first reason is that, because in that definition, we have only assumed $3g-3$ conjugacy classes (and hence their powers) 
are strongly loxodromic. It may be that other elements in the group are represented by non-loxodromic maps. Secondly,
even if the group is totally strongly loxodromic, it may be non-discrete. To see this, we observe that in 
${\rm SL}(2,{\mathbb C})$, and hence in its image in ${\rm SL}(3,{\mathbb C})$ under the irreducible representation,
there are non-discrete totally (strongly) loxodromic representations of surface groups. 

Furthermore, we chose to consider representations to ${\rm SL}(3,{\mathbb C})$, or to a Lie subgroup, 
that may be continuously deformed through discrete, faithful, totally strongly loxodromic representations from the space of
Fuchsian representations to ${\rm SO}_0(2,1)$. We call such representations quasi-Fuchsian. 
It would also be possible to consider other components
of the space of discrete, faithful representations of $\pi_1(S_g)$, see for example Goldman, Kapovich, Leeb \cite{G-K-L},
but in this case, care would be needed when considering ${\mathbb C}$-Fuchsian pairs of pants. 

\begin{definition}\label{def-Fuchs-space}
The Fuchsian space, denoted ${\mathcal F}(S_g)$, for the surface $S_g$
%${\rm SO}_0(2,1)$-Teichm\"uller space of $S_g$, written ${\mathcal T}\bigl(S_g,{\rm SO}_0(2,1)\bigr)$, 
is the subspace of ${\rm Hom}\bigl(\pi_1(S_g),{\rm SO}_0(2,1)\bigr)//{\rm SO}_0(2,1)$ of irreducible, discrete, faithful, 
totally loxodromic representations $\rho:\pi_1(S_g)\longrightarrow {\rm SO}_0(2,1)$ up to conjugation. 
\end{definition}

We remark that ${\rm SO}_0(2,1)$ and 
${\rm PSL}(2,{\mathbb R})$ are isomorphic (for a concrete isomorphism, see Section~\ref{sec:SL(2,K)} below), 
and both are the orientation preserving isometry groups of the hyperbolic plane.
Note that ${\mathcal F}(S_g)$ is a copy of Teichm\"uller space ${\mathcal T}(S_g)$ from Definition~\ref{def-Teich}.
We now give an ${\rm SO}_0(2,1)$ version of the theorem of Fenchel and Nielsen, see Section~\ref{sec:alg-FN} below.

\begin{theorem}[Fenchel-Nielsen]\label{thm:main-SO(2,1)}
Let $S_g$ and ${\mathcal L}$ be as above. Let $\rho\in{\mathcal F}(S_g)$. 
For $j=1,\,\ldots,\,3g-3$ write $A_j=\rho([\gamma_j])$ and write $t_\Gamma(\gamma_j)$ for the twist along $\gamma_j$. 
Then $\Gamma$ is determined up to conjugation by
\begin{enumerate}
\item[(1)] the traces ${\rm tr}(A_1),\,\ldots,\,{\rm tr}(A_{3g-3})$, where each trace lies in $[3,\infty)$,
\item[(2)] the twists $t_\Gamma(\gamma_1),\,\ldots,\,t_\Gamma(\gamma_{3g-3})$, where each twist lies in ${\mathbb R}$. 
\end{enumerate}
\end{theorem}

In order to relate this result to Theorem~\ref{thm:main-SL(3,C)} we observe that for such representations:
the traces are real and greater than $3$ and satisfy ${\rm tr}(A_j)={\rm tr}(A_j^{-1})$; the 
bend, bulge and turn parameters $\theta_\Gamma(\gamma_j)$, $s_\Gamma(\gamma_j)$, $\phi_\Gamma(\gamma_j)$
are all zero, the commutator equations have repeated roots and the shape invariants satisfy
\begin{eqnarray}
\sigma_+(Y_k) = \sigma_-(Y_k)  & = & {\rm tr}(A_{k_1})+{\rm tr}(A_{k_2})+{\rm tr}(A_{k_3})+1 \label{eq:shape-SL} \\
&& \quad +2\sqrt{\bigl({\rm tr}(A_{k_1})+1\bigr)\bigl({\rm tr}(A_{k_2})+1\bigr)\bigl({\rm tr}(A_{k_3})+1\bigr)}, \notag
\end{eqnarray}

We can now define quasi-Fuchsian space for ${\rm SL}(3,{\mathbb C})$ and its Lie subgroups.

\begin{definition}
Let $G$ be a group satisfying ${\rm SO}_0(2,1)<G<{\rm SL}(3,{\mathbb C})$. Let $S_g$ be a closed, oriented
surface of genus $g\ge 2$ and let ${\mathcal L}$ be a curve system on $S_g$. Define 
the $G$-quasi-Fuchsian space of $S_g$ with respect to ${\mathcal L}$, written ${\mathcal Q}(S_g,{\mathcal L},G)$, 
to be the collection of ${\rm SL}(3,{\mathbb C})$-conjugacy 
classes of representations $\rho:\pi_1(S_g)\longrightarrow G$ satisfying
\begin{enumerate}
\item[(1)] each representation in ${\mathcal Q}(S_g,{\mathcal L},G)$ is discrete, faithful and totally strongly loxodromic.
\item[(2)] ${\mathcal Q}(S_g,{\mathcal L},G)$ contains the Fuchsian space ${\mathcal F}(S_g)$. That is, 
the ${\rm SL}(3,{\mathbb C})$-conjugacy class of each Fuchsian representation 
$\rho_0:\pi_1(S_g)\longrightarrow {\rm SO}_0(2,1)<G$ lies in ${\mathcal Q}(S_g,{\mathcal L},G)$. 
\item[(3)] ${\mathcal Q}(S_g,{\mathcal L},G)$ is path connected. That is, each 
representation $\rho\in {\mathcal Q}(S_g,{\mathcal L},G)$ may be connected to a Fuchsian representation
by a path of representations lying in ${\mathcal Q}(S_g,{\mathcal L},G)$.
\end{enumerate}
\end{definition}
%
%****
%Given a curve system ${\mathcal L}$ on $S_g$, we define 
%${\mathcal L}$
%
%
%as the path-connected component of the space of
%conjugacy classes of representations 
%so that
%\begin{enumerate}
%\item[(1)] for $j=1,\,\ldots,\,3g-3$ the curve $\gamma_j$ is represented by a strongly loxodromic map 
%$A_j=\rho([\gamma_j])$;
%\item[(2)] for $k=1,\,\ldots,\,2g-2$ the restriction of $\rho$ to $\pi_1(Y_k)$ is irreducible; and 
%\item[(3)] ${\mathcal T}\bigl(S_g,{\rm SO}_0(2,1)\bigr)$ is a subset of
%${\mathcal Q}(S_g,{\mathcal L},G)$ arising from representations whose image factors through 
%the subgroup ${\rm SO}_0(2,1)$ of $G$.
%\end{enumerate}
%\end{definition}

We remark that in the case of $G={\rm SO}(3,{\mathbb C})$ and ${\rm SU}(2,1)$ the space 
${\mathcal Q}(S_g,{\mathcal L},G)$ is not a component of the representation variety. In these cases, we may move
towards the boundary of the space of discrete and faithful representations by pinching a simple closed curve on $S_g$ so that
in the limit is represented by a parabolic map and then beyond to representations where it becomes elliptic of infinite order,
and so the representation ceases to be discrete.

%In general the space ${\mathcal Q}(S_g,{\mathcal L},G)$ is larger than the corresponding component of the space of discrete, 
%faithful, totally loxodromic representations containing Teichm\"uller space. This was already the case for quasi-Fuchsian
%groups as considered by Kourouniotis and Tan. 

Moreover,  the requirement (3) means that a representation in ${\mathcal Q}(S_g,{\mathcal L}, G)$ should be connected
by a path of representations in ${\mathcal Q}(S_g,{\mathcal L},G)$ to a Fuchsian representation in
${\mathcal F}(S_g)$.
%${\mathcal T}\bigl(S_g,{\rm SO}_0(2,1)\bigr)$. 
This is more restrictive than simply requiring a representation whose image lies in $G$. In particular, 
when $G={\rm SL}(3,{\mathbb R})$ then each loxodromic map must have positive eigenvalues. This is
because all eigenvalues of loxodromic maps in ${\rm SO}_0(2,1)$ are positive, and when continuously deforming
through loxodromic maps we cannot have eigenvalue $0$. Similarly, when $G={\rm SU}(2,1)$ we need to be
in the component of the deformation space containing ${\rm SO}_0(2,1)$ representations, and hence the
Toledo invariant must be zero.

%See later sections for the definitions of many of the objects in the following sections. Our main theorem is:
%
%\begin{theorem}\label{thm:main-SL(3,C)}
%Let $S_g$ and ${\mathcal L}$ be as above. Let $\rho\in{\mathcal Q}\bigl(S_g,{\mathcal L}, {\rm SL}(3,{\mathbb C})\bigr)$ and write 
%$\Gamma=\rho\bigl(\pi_1(S_g)\bigr)$.
%For $j=1,\,\ldots,\,3g-3$ write $A_j=\rho([\gamma_j])$, write $(t+i\theta)_\Gamma(\gamma_j)$ 
%for the complex twist-bend along $\gamma_j$ and write $(s+i\phi)_\Gamma(\gamma_j)$ 
%for the complex bulge-turn along $\gamma_j$. For $k=1,\,\ldots,\,2g-2$ write 
%$\sigma_+(Y_k)$ and $\sigma_-(Y_k)$ for the shape invariants of $Y_k$. 
%Then $\Gamma$ is determined up to conjugation by
%\begin{enumerate}
%\item[(1)] the traces ${\rm tr}(A_1),\,\ldots,\,{\rm tr}(A_{3g-3})$ and ${\rm tr}(A_1^{-1}),\,\ldots,\,{\rm tr}(A_{3g-3}^{-1})$;
%\item[(2)] the shape invariants $\sigma_+(Y_1),\,\ldots,\, \sigma_+(Y_{2g-2})$ and 
%$\sigma_-(Y_1),\,\ldots,\, \sigma_-(Y_{2g-2})$; 
%\item[(3)] the choice of a root of the commutator equations $Q(Y_1),\,\ldots,\,Q(Y_{2g-2})$; 
%\item[(4)] the twist-bend parameters $(t+i\theta)_\Gamma(\gamma_1),\,\ldots,\,(t+i\theta)_\Gamma(\gamma_{3g-3})$
%and the bulge-turn parameters $(s+i\phi)_\Gamma(\gamma_1),\,\ldots,\,(s+i\phi)_\Gamma(\gamma_{3g-3})$.
%\end{enumerate}
%\end{theorem}

%Observe that in the case where $G={\rm SO}_0(2,1)$ the space ${\mathcal Q}\bigl(S_g,{\rm SO}_0,(2,1)\bigr)$
%is simply ${\mathcal T}\bigl(S_g,{\rm SO}_0(2,1)\bigr)$. 

Since ${\mathcal Q}\bigl(S_g,{\mathcal L},{\rm SL}(3,{\mathbb C})\bigr)$ is a subset of ${\mathcal Y}(S_g,{\mathcal L})$
we can repeat Theorem~\ref{thm:main-SL(3,C)} word for word. 
We are interested in ${\mathcal Q}(S_g,{\mathcal L},G)$ in the cases where 
$G$ is one of ${\rm SO}(3;{\mathbb C})$ (that is the irreducible representation of 
${\rm PSL}(2,{\mathbb C})$), ${\rm SL}(3,{\mathbb R})$ or ${\rm SU}(2,1)$. 
We now give Fenchel-Nielsen coordinates for ${\mathcal Q}(S_g,{\mathcal L},G)$ in each of these cases. 
In our observations following Theorem~\ref{thm:main-SO(2,1)} we observed that a representation being 
Fuchsian placed certain restrictions on the parameters from Theorem~\ref{thm:main-SL(3,C)}. Similar
restrictions apply for each of the other groups $G$. These restrictions cut out subspaces of 
${\mathcal Q}\bigl(S_g,{\mathcal L},{\rm SL}(3,{\mathbb C})\bigr)$ which are all linear, apart from the condition
\eqref{eq:shape-SL} for the shape invariant when $G={\rm SO}_0(2,1)$ or ${\rm SO}(3,{\mathbb C})$.

%The construction outlined in Section~\ref{sec:alg-FN} can be repeated word for word but with
%${\rm SO}_0(2,1)$ in place of ${\rm PSL}(2,{\mathbb R})$ to yield:

%
%
%We remark that ${\mathcal T}\bigl(S_g,{\rm SO}_0(2,1)\bigr)$ is (contained in) the subset of
%${\mathcal Q}\bigl(S_g,{\mathcal L},{\rm SL}(3,{\mathbb C})\bigr)$ where
%\begin{enumerate}
%\item[(1)] ${\rm tr}(A_j)={\rm tr}(A_j^{-1})\in{\mathbb R}$ for $j=1,\,\ldots,\,3g-3$;
%\item[(2)] if $\gamma_{k_1},\,\gamma_{k_2},\,\gamma_{k_3}$ are the boundary curves of $Y_k$ then the
%shape invariants satisfy
%\begin{eqnarray}
%\sigma_+(Y_k) = \sigma_-(Y_k)  & = & {\rm tr}(A_{k_1})+{\rm tr}(A_{k_2})+{\rm tr}(A_{k_3})+1 \\%\label{eq:shape-SL} \\
%&& \quad +2\sqrt{\bigl({\rm tr}(A_{k_1})+1\bigr)\bigl({\rm tr}(A_{k_2})+1\bigr)\bigl({\rm tr}(A_{k_3})+1\bigr)}, \notag
%\end{eqnarray}
%\item[(3)] the commutator equations $Q_k$ all have repeated roots;
%\item[(4)] the bend, bulge and turn parameters are all zero.
%\end{enumerate}

Our next result concerns the irreducible representation of ${\rm SL}(2,{\mathbb C})$ to ${\rm SL}(3,{\mathbb C})$. The
image of this representation is ${\rm SO}(3;{\mathbb C})$, see below. The next three results are re-interpretations
of the theorems of Kourouniotis \cite{Kou}, Tan \cite{Tan}, of Goldman \cite{Gol} and of Parker-Platis \cite{PP}
respectively.

\begin{theorem}\label{thm:main-SO(3,C)} 
Let $S_g$ and ${\mathcal L}$ be as above. Let $\rho\in{\mathcal Q}\bigl(S_g,{\mathcal L}, {\rm SO}(3;{\mathbb C})\bigr)$ and write 
$\Gamma=\rho\bigl(\pi_1(S_g)\bigr)$.
For $j=1,\,\ldots,\,3g-3$ write $A_j=\rho([\gamma_j])$, write $(t+i\theta)_\Gamma(\gamma_j)$ 
for the complex twist bend along $\gamma_j$. For $k=1,\,\ldots,\,2g-2$ write 
$\sigma_+(Y_k)$ and $\sigma_-(Y_k)$ for the shape invariants of $Y_k$. 
Then $\Gamma$ is determined up to conjugation by
\begin{enumerate}
\item[(1)] the traces ${\rm tr}(A_1),\,\ldots,\,{\rm tr}(A_{3g-3})$; 
\item[(2)] the twist-bends $(t+i\theta)_\Gamma(\gamma_1),\,\ldots,\,(t+i\theta)_\Gamma(\gamma_{3g-3})$.
\end{enumerate}
\end{theorem}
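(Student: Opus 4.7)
The plan is to derive Theorem \ref{thm:main-SO(3,C)} from Theorem \ref{thm:main-SL(3,C)} by showing that every additional piece of data in the ${\rm SL}(3,{\mathbb C})$ classification (the second set of traces, the shape invariants, the choice of root of the commutator equation, and the bulge-turn parameters) is automatically determined by the traces ${\rm tr}(A_j)$ and the twist-bends $(t+i\theta)_\Gamma(\gamma_j)$ once $\rho$ is constrained to take values in ${\rm SO}(3;{\mathbb C})$. The key structural input is that ${\rm SO}(3;{\mathbb C})$ is the image of the irreducible (symmetric-square) representation $\pi\colon{\rm SL}(2,{\mathbb C})\to{\rm SL}(3,{\mathbb C})$, which is a two-to-one covering. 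Because ${\rm SO}_0(2,1)$-Fuchsian representations lift to ${\rm SL}(2,{\mathbb R})$ and the lifting obstruction in $H^2\bigl(\pi_1(S_g);{\mathbb Z}/2\bigr)$ is locally constant on the deformation space, every $\rho\in{\mathcal D}\bigl(S_g,{\mathcal L},{\rm SO}(3;{\mathbb C})\bigr)$ admits a lift $\tilde\rho\colon\pi_1(S_g)\to{\rm SL}(2,{\mathbb C})$, and I would use this lift throughout.

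First I would record that if $M\in{\rm SL}(2,{\mathbb C})$ has eigenvalues $\lambda,\lambda^{-1}$, then $\pi(M)$ has eigenvalues $\lambda^2,1,\lambda^{-2}$, so ${\rm tr}\bigl(\pi(M)\bigr)={\rm tr}(M)^2-1$ and in particular ${\rm tr}\bigl(\pi(M)\bigr)={\rm tr}\bigl(\pi(M^{-1})\bigr)$. This identity alone eliminates the second set of traces from Theorem \ref{thm:main-SL(3,C)}(1) as independent invariants on ${\rm SO}(3;{\mathbb C})$.

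Next I would analyse each pair of pants $Y_k$ separately. The restriction $\tilde\rho|_{\pi_1(Y_k)}$ is an irreducible ${\rm SL}(2,{\mathbb C})$-representation of a rank-two free group and, by the Fricke-Vogt theorem (Theorem \ref{thm-fv}), is determined up to conjugation by the three boundary traces. Composing with $\pi$, the restriction $\rho|_{\pi_1(Y_k)}$ is likewise rigid in ${\rm SO}(3;{\mathbb C})$, so every conjugation invariant attached to $Y_k$, including the shape invariants $\sigma_\pm(Y_k)$ and any choice of root of the commutator equation $Q(Y_k)$, is an explicit function of ${\rm tr}(A_{k_1}),{\rm tr}(A_{k_2}),{\rm tr}(A_{k_3})$. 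I would verify the resulting formula, which I expect to match the expression in the remark following Theorem \ref{thm:main-SL(3,C)}, by a direct symmetric-square computation. The bulge-turn parameters are forced to vanish by a centraliser count: $Z(A_j)\cap{\rm SO}(3;{\mathbb C})$ is the image under $\pi$ of the one-complex-dimensional centraliser of $\tilde A_j$ in ${\rm SL}(2,{\mathbb C})$, so the twist-bend direction exhausts the available centralising freedom, leaving no non-trivial bulge-turn.

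Finally I would assemble the reductions: Theorem \ref{thm:main-SL(3,C)} determines $\Gamma$ up to conjugation from its data (1)--(4), and substituting the identifications established above reduces this data to the traces and twist-bends listed in Theorem \ref{thm:main-SO(3,C)}. The main obstacle I anticipate is the bookkeeping around the twofold ambiguity in the lift $\tilde\rho$ together with the simultaneous sign choice on the eigenvalues of $\tilde A_j$: one has to check that the twist-bend eigenvalue of $K_j\in Z(A_j)$ used in the definition descends unambiguously through $\pi$, and that the root of the commutator equation dictated by the lift is precisely the one selected by the ${\rm SO}(3;{\mathbb C})$ data, so that no spurious sign invariants survive the reduction.
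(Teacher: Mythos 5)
Your overall strategy --- pass to the irreducible representation of ${\rm SL}(2,{\mathbb C})$, use ${\rm tr}(A_j)={\rm tr}(A_j^{-1})$ to drop the inverse traces, and kill the bulge--turn by computing the centraliser of $A_j$ inside ${\rm SO}(3;{\mathbb C})$ --- is the same as the paper's, and those two reductions are fine. The pivotal middle step is not. You claim that since $\tilde\rho|_{\pi_1(Y_k)}$ is determined by its three ${\rm SL}(2,{\mathbb C})$ boundary traces (Theorem~\ref{thm-fv}), every conjugation invariant of $Y_k$, in particular $\sigma_\pm(Y_k)$, is an explicit function of ${\rm tr}(A_{k_1}),{\rm tr}(A_{k_2}),{\rm tr}(A_{k_3})$. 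This does not follow: ${\rm tr}(A)={\rm tr}^2(\widehat A)-1$ determines ${\rm tr}(\widehat A)$ only up to sign, and the quantity controlling $\sigma_\pm$ is the product ${\rm tr}(\widehat A){\rm tr}(\widehat B){\rm tr}(\widehat A\widehat B)$, which is \emph{independent of the choice of lift} (changing the lift of a generator flips an even number of these signs). So for fixed ${\rm SL}(3,{\mathbb C})$ boundary traces there are in general two non-conjugate pants groups in ${\rm SO}(3;{\mathbb C})$, distinguished by which root of the quadratic in Lemma~\ref{lem-shape-irred} equals $\sigma_+=\sigma_-$. Hence the difficulty you defer to the end is not bookkeeping of the twofold lift ambiguity; it is a genuine extra invariant that your argument never pins down, and without pinning it down the traces and twist-bends do not determine $\Gamma$.

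The missing ingredient is the defining property of ${\mathcal D}\bigl(S_g,{\mathcal L},{\rm SO}(3;{\mathbb C})\bigr)$: path-connectedness, through representations in which each $A_j$ is strongly loxodromic (so ${\rm tr}(A_j)\neq-1$), to the Fuchsian locus. The paper uses Proposition~\ref{prop-gm} (Gilman--Maskit) to get ${\rm tr}(\widehat A){\rm tr}(\widehat B){\rm tr}(\widehat A\widehat B)<0$ for Fuchsian pants groups, which selects the root in Proposition~\ref{prop-shape-fuchsian}, and then Corollary~\ref{cor-shape-quasiF} propagates that choice by analytic continuation of the branch of $\sqrt{\bigl({\rm tr}(A)+1\bigr)\bigl({\rm tr}(B)+1\bigr)\bigl({\rm tr}(C)+1\bigr)}$ along paths in the deformation space, so that $\sigma_\pm(Y_k)$ really is the function \eqref{eq:shape-SL} of the boundary traces. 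Your proof needs this argument (or an equivalent continuity argument anchored at the Fuchsian locus); the global lift via the $H^2\bigl(\pi_1(S_g);{\mathbb Z}/2\bigr)$ obstruction, while correct, does not substitute for it. A smaller point: on the image of $\Phi_*$ the commutator quadratic $Q(Y_k)$ has a repeated root, so the ``choice of root of $Q$'' you worry about at the end is vacuous; the genuinely two-valued object is $\sigma_\pm$, a root of the quadratic $T_2$ of Theorem~\ref{thm:branch-fact}.
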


We remark that ${\mathcal Q}\bigl(S_g,{\mathcal L},{\rm SO}(3;{\mathbb C})\bigr)$ is contained in the subset of
${\mathcal Q}\bigl(S_g,{\mathcal L},{\rm SL}(3,{\mathbb C})\bigr)$ where
\begin{enumerate}
\item[(1)] ${\rm tr}(A_j)={\rm tr}(A_j^{-1})\in{\mathbb C}$ for $j=1,\,\ldots,\,3g-3$;
\item[(2)] if $\gamma_{k_1},\,\gamma_{k_2},\,\gamma_{k_3}$ are the boundary curves of $Y_k$ then the
shape invariants $\sigma_+(Y_k)=\sigma_-(Y_k)$ and satisfy equation \eqref{eq:shape-SL}.
%\begin{eqnarray*}
%\sigma_+(Y_k) = \sigma_-(Y_k) & = & {\rm tr}(A_{k_1})+{\rm tr}(A_{k_2})+{\rm tr}(A_{k_3}) \\
%&& \quad +2\sqrt{\bigl({\rm tr}(A_{k_1})+1\bigr)\bigl({\rm tr}(A_{k_2})+1\bigr)\bigl({\rm tr}(A_{k_3})+1\bigr)},
%\end{eqnarray*}
\item[(3)] the commutator equations $Q_k$ all have repeated roots;
\item[(4)] the bulge-turn parameters $s_\Gamma(\gamma_j)$, $\phi_\Gamma(\gamma_j)$ are all zero.
\end{enumerate}

\begin{theorem}\label{thm:main-SL(3,R)}
Let $S_g$ and ${\mathcal L}$ be as above. Let $\rho\in{\mathcal Q}\bigl(S_g,{\mathcal L}, {\rm SL}(3,{\mathbb R})\bigr)$ and write 
$\Gamma=\rho\bigl(\pi_1(S_g)\bigr)$.
For $j=1,\,\ldots,\,3g-3$ write $A_j=\rho([\gamma_j])$, write $t_\Gamma(\gamma_j)$ 
for the twist bend $\gamma_j$ and write $s_\Gamma(\gamma_j)$ 
for the bulge along $\gamma_j$. For $k=1,\,\ldots,\,2g-2$ write 
$\sigma_+(Y_k)$ and $\sigma_-(Y_k)$ for the shape invariants of $Y_k$. 
Then $\Gamma$ is determined up to conjugation by
\begin{enumerate}
\item[(1)] the traces ${\rm tr}(A_1),\,\ldots,\,{\rm tr}(A_{3g-3})$ and ${\rm tr}(A_1^{-1}),\,\ldots,\,{\rm tr}(A_{3g-3}^{-1})$;
\item[(2)] the shape invariants $\sigma_+(Y_1),\,\ldots,\, \sigma_+(Y_{2g-2})$ and 
$\sigma_-(Y_1),\,\ldots,\, \sigma_-(Y_{2g-2})$; 
\item[(3)] the choice of a root of the commutator equations $Q(Y_1),\,\ldots,\,Q(Y_{2g-2})$; 
\item[(4)] the twists $t_\Gamma(\gamma_1),\,\ldots,\,t_\Gamma(\gamma_{3g-3})$
and the bulges $s_\Gamma(\gamma_1),\,\ldots,\,s_\Gamma(\gamma_{3g-3}$.
\end{enumerate}
\end{theorem}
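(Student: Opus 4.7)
The plan is to piggyback on Theorem~\ref{thm:main-SL(3,C)} and verify that when $\rho$ factors through the real subgroup ${\rm SL}(3,{\mathbb R})$, the complex Fenchel--Nielsen parameters provided by that theorem automatically become real, and that the conjugation recovering $\Gamma$ from the data can be chosen to lie in ${\rm SL}(3,{\mathbb R})$. Since ${\mathcal D}\bigl(S_g,{\mathcal L},{\rm SL}(3,{\mathbb R})\bigr)\subset{\mathcal D}\bigl(S_g,{\mathcal L},{\rm SL}(3,{\mathbb C})\bigr)$, the existence part of the correspondence is free; the content of Theorem~\ref{thm:main-SL(3,R)} is that the reduced real parameters still suffice.

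First I would repeat the inductive pair-of-pants construction used for Theorem~\ref{thm:main-SL(3,C)}. On each pair of pants $Y_k$ the image $\rho\bigl(\pi_1(Y_k)\bigr)$ is generated by real matrices $A,B,C$ with $CBA=I$, and the Fricke--Vogt-type classification underlying the ${\rm SL}(3,{\mathbb C})$ case recovers the triple up to ${\rm SL}(3,{\mathbb C})$ conjugation from the six traces ${\rm tr}(A^{\pm1}), {\rm tr}(B^{\pm1}), {\rm tr}(C^{\pm1})$, the shape invariants $\sigma_{\pm}(Y_k)$, and the choice of a root of the commutator equation $Q(Y_k)$. The requirement that $\rho$ lie in the component containing a Fuchsian representation from ${\mathcal T}\bigl(S_g,{\rm SO}_0(2,1)\bigr)$ is essential here: it forces each $A_j$ to have three distinct \emph{positive real} eigenvalues (otherwise a continuous deformation would have to pass through a zero eigenvalue or through a non-strongly-loxodromic map), so the relevant normal form may be taken diagonal over $\mathbb{R}$.

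Next I would analyse the gluing/centraliser data. With each $A_j\in{\rm SL}(3,{\mathbb R})$ strongly loxodromic with distinct positive real eigenvalues, all three eigenvectors of $A_j$ lie in $\mathbb{R}^3$, and $Z(A_j)$ is the real split torus of matrices diagonal in this eigenbasis; it is a real form of the complex two-torus $(\mathbb{C}^{\ast})^2$ that appeared in the ${\rm SL}(3,{\mathbb C})$ construction. Any centraliser element $K_j$ used to glue across $\gamma_j$ therefore has real eigenvalues $e^{t_\Gamma(\gamma_j)/2}$ and $e^{s_\Gamma(\gamma_j)/2}$ on the two distinguished eigenvectors of $A_j$ (with the first being ${\bf v}_+(A_j)$, as in Section~\ref{sec:alg-FN}). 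In particular the bend component $\theta$ and the turn component $\phi$ appearing in the complex twist-bend $t+i\theta$ and complex bulge-turn $s+i\phi$ of Theorem~\ref{thm:main-SL(3,C)} vanish automatically, leaving only the real twist $t_\Gamma(\gamma_j)$ and real bulge $s_\Gamma(\gamma_j)$.

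The main obstacle is the reality of the global conjugation: showing that two representations into ${\rm SL}(3,{\mathbb R})$ which are ${\rm SL}(3,{\mathbb C})$-conjugate are actually conjugate by an element of ${\rm SL}(3,{\mathbb R})$. This is a Galois-cohomology statement controlled by the centraliser of the (irreducible) image, and for our strongly loxodromic, irreducible representations that centraliser is a real split torus, for which the relevant $H^1\bigl({\rm Gal}(\mathbb{C}/\mathbb{R}),-\bigr)$ vanishes. Concretely, I would apply complex conjugation to an ${\rm SL}(3,{\mathbb C})$-conjugator between two real triples on $Y_k$: the resulting matrix is again a conjugator, and by Schur's lemma the two differ by an element of the centraliser, which lies in the split torus and is therefore a square over $\mathbb{R}$. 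This allows averaging to produce a real conjugator. Once this reality step is established on each pair of pants, the attaching and handle-closing procedure of Section~\ref{sec:alg-FN} goes through verbatim with real twist and real bulge in place of the complex twist-bend and complex bulge-turn, and the data (1)--(4) determine $\Gamma$ up to conjugation in ${\rm SL}(3,{\mathbb R})$.
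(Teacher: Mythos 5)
Your proposal follows essentially the same route as the paper. There, Theorem~\ref{thm:main-SL(3,R)} is obtained by specialising the ${\rm SL}(3,{\mathbb C})$ construction: the traces and shape invariants are real, Lawton's theorem (Theorem~\ref{thm-lawton}) determines each pants group from these data together with a root of $Q(Y_k)$, and since each gluing element $K\in Z(A_j)$ lies in ${\rm SL}(3,{\mathbb R})$ and has real (indeed positive, by connectivity to the Fuchsian locus) eigenvalues, the bends $\theta$ and turns $\phi$ vanish, leaving only the real twists and bulges; your first two paragraphs are exactly this argument. Where you diverge is the final paragraph, where you add a Galois-descent step to upgrade ${\rm SL}(3,{\mathbb C})$-conjugacy to ${\rm SL}(3,{\mathbb R})$-conjugacy. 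The paper does not take this step: its ``determined up to conjugation'' is the ${\rm SL}(3,{\mathbb C})$-conjugacy supplied by Lawton's theorem, and questions about real forms are handled separately, via Acosta's theorem (Proposition~\ref{prop:acosta}) in the proof of Theorem~\ref{thm-pants}(2). Your justification of the extra step also needs a correction: by Schur's lemma the centraliser of the irreducible image is the centre $\{\omega I:\omega^3=1\}$ of ${\rm SL}(3,{\mathbb C})$, not a real split torus, so the vanishing you should invoke is that of $H^1$ of ${\rm Gal}({\mathbb C}/{\mathbb R})$ with coefficients in a group of odd order; concretely, a complex conjugator $D$ between two real triples satisfies $\overline{D}=D\,\omega$ with $\omega^3=1$, so $D+\overline{D}=D(1+\omega)$ is automatically an invertible real intertwiner, and after rescaling by a real scalar one may take it in ${\rm SL}(3,{\mathbb R})$. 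With that repair your refinement is correct, but it is an addition beyond what the paper proves rather than a needed ingredient of its argument.
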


We remark that ${\mathcal Q}\bigl(S_g,{\mathcal L},{\rm SL}(3,{\mathbb R})\bigr)$ is (contained in) the subset of
${\mathcal Q}\bigl(S_g,{\mathcal L},{\rm SL}(3,{\mathbb C})\bigr)$ where
\begin{enumerate}
\item[(1)] ${\rm tr}(A_j)$ and ${\rm tr}(A_j^{-1})\in{\mathbb R}_+$ for $j=1,\,\ldots,\,3g-3$;
\item[(2)] the shape invariants $\sigma_+(Y_k)$ and $\sigma_-(Y_k)$ are real;
\item[(3)] the bend and turn parameters $\theta_\Gamma(\gamma_j)$, $\phi_\Gamma(\gamma_j)$ are all zero.
\end{enumerate}

\begin{theorem}\label{thm:main-SU(2,1)}
Let $S_g$ and ${\mathcal L}$ be as above. Let $\rho\in{\mathcal Q}\bigl(S_g,{\mathcal L}, {\rm SU}(2,1)\bigr)$ and write 
$\Gamma=\rho\bigl(\pi_1(S_g)\bigr)$.
For $j=1,\,\ldots,\,3g-3$ write $A_j=\rho([\gamma_j])$, write $t_\Gamma(\gamma_j)$ 
for the twist along $\gamma_j$ and $\phi_\Gamma(\gamma_j)$ for the turn along $\gamma_j$.
For $k=1,\,\ldots,\,2g-2$ write $\sigma_+(Y_k)$ and $\sigma_-(Y_k)$ for the shape invariants of $Y_k$. 
Then $\Gamma$ is determined up to conjugation by
\begin{enumerate}
\item[(1)] the traces ${\rm tr}(A_1),\,\ldots,\,{\rm tr}(A_{3g-3})$; 
\item[(2)] the shape invariants $\sigma_+(Y_1),\,\ldots,\, \sigma_+(Y_{2g-2})$;
\item[(3)] the choice of a root of the commutator equations $Q(Y_1),\,\ldots,\,Q(Y_{2g-2})$; 
\item[(4)] the twists $t_\Gamma(\gamma_1),\,\ldots,\,t_\Gamma(\gamma_{3g-3})$
and turns $\phi_\Gamma(\gamma_1),\,\ldots,\,\phi_\Gamma(\gamma_{3g-3})$.
\end{enumerate}
\end{theorem}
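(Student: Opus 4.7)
The plan is to deduce Theorem~\ref{thm:main-SU(2,1)} as a specialization of Theorem~\ref{thm:main-SL(3,C)} by tracking how each piece of ${\rm SL}(3,{\mathbb C})$ data collapses under the hypothesis that $\rho$ factors through ${\rm SU}(2,1)$. Since ${\rm SU}(2,1) \subset {\rm SL}(3,{\mathbb C})$ we may apply the main theorem to $\rho$; the task is then to show that, in this setting, the listed data $(1)$--$(4)$ already determine the enlarged data $(1)$--$(4)$ of Theorem~\ref{thm:main-SL(3,C)}, and that these are sufficient to recover $\Gamma$ up to conjugation in ${\rm SU}(2,1)$ (not merely in ${\rm SL}(3,{\mathbb C})$).

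First I would handle the trace and shape-invariant data. For $A\in{\rm SU}(2,1)$ preserving a Hermitian form $H$ of signature $(2,1)$, the identity $A^{-1}=H^{-1}A^{*}H$ forces ${\rm tr}(A^{-1})=\overline{{\rm tr}(A)}$, so the sequence ${\rm tr}(A_j^{-1})$ is already determined by ${\rm tr}(A_j)$. Next I would verify an analogous conjugation relation for the two shape invariants of an ${\rm SU}(2,1)$ pair of pants: writing the definitions of $\sigma_+$ and $\sigma_-$ (as polynomial expressions in ${\rm tr}(A)$, ${\rm tr}(B)$, ${\rm tr}(C)$, ${\rm tr}(BA)$ and ${\rm tr}(A^{-1}B^{-1})$ in the notation of later sections) and applying the same Hermitian-form identity to each trace, one obtains $\sigma_-(Y_k)=\overline{\sigma_+(Y_k)}$. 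Hence knowing only the $\sigma_+(Y_k)$ suffices, and the choice of root of each $Q(Y_k)$ carries over unchanged from the ${\rm SL}(3,{\mathbb C})$ setting.

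Next I would treat the gluing parameters. The centraliser $Z(A)$ of a regular element $A\in{\rm SU}(2,1)$ is a maximal torus of real dimension two, as opposed to the four-real-dimensional centraliser inside ${\rm SL}(3,{\mathbb C})$. Concretely, a ${\rm SU}(2,1)$-centralising $K$ has eigenvalues of the form $(e^{(t+i\phi)/2},\,e^{-i\phi},\,e^{(-t+i\phi)/2})$ (up to cube roots of unity) on the eigenbasis of $A$; in the ${\rm SL}(3,{\mathbb C})$ coordinates this corresponds to setting the bend $\theta$ and the bulge $s$ to zero while retaining the twist $t$ and the turn $\phi$. Running the amalgamation/HNN construction of Section~\ref{sec:alg-FN} with this restricted $K$ produces exactly the ${\rm SU}(2,1)$ deformation; conversely, any ${\rm SU}(2,1)$ gluing is of this form because the ambiguity in attaching two ${\rm SU}(2,1)$ pants along a common boundary is a ${\rm SU}(2,1)$-centraliser, hence lies in this same torus.

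The step I expect to be the main obstacle is justifying the global recovery within ${\rm SU}(2,1)$ rather than within the ambient ${\rm SL}(3,{\mathbb C})$. Two issues arise: one has to show that the Hermitian form $H$ is itself determined (up to scale and ${\rm SU}(2,1)$-conjugation) by the data $(1)$--$(3)$ at a single pair of pants, and that this $H$ is then compatible across all pants in the decomposition; this amounts to the vanishing of the Toledo invariant, which is guaranteed by the path-connectedness requirement (3) in the definition of ${\mathcal D}(S_g,{\mathcal L},G)$ together with the fact that ${\mathcal T}\bigl(S_g,{\rm SO}_0(2,1)\bigr)$ has Toledo invariant zero. Secondly, one must verify that the ${\rm SL}(3,{\mathbb C})$-conjugation producing equality of two such representations can be chosen to lie in ${\rm SU}(2,1)$; this follows because a linear map intertwining two irreducible ${\rm SU}(2,1)$-representations must, by Schur's lemma applied to the Hermitian form, preserve $H$ up to a scalar which is absorbed by the ${\rm SL}(3,{\mathbb C})$ determinant-one condition.
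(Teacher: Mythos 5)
Your proposal is correct and follows essentially the same route as the paper: the Hermitian identity $A^{-1}=J^{-1}A^*J$ gives ${\rm tr}(A_j^{-1})=\overline{{\rm tr}(A_j)}$ and $\sigma_-(Y_k)=\overline{\sigma_+(Y_k)}$, Lawton's theorem then determines each pants group from the data (1)--(3), and the analysis of the centraliser of a loxodromic element of ${\rm SU}(2,1)$ (a two-real-dimensional torus in the eigenbasis of $A$) shows the bend and bulge vanish, leaving only twist and turn. Your closing paragraph about recovering the Hermitian form and conjugating within ${\rm SU}(2,1)$ goes beyond what the paper's proof does --- the paper works up to ${\rm SL}(3,{\mathbb C})$-conjugacy and handles the ${\rm SU}(2,1)$ recognition separately via Acosta's theorem in Theorem~\ref{thm-pants}(3) --- but it is consistent with, rather than a departure from, the argument.
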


We remark that ${\mathcal Q}\bigl(S_g,{\mathcal L},{\rm SU}(2,1)\bigr)$ is contained in the subset of
${\mathcal Q}\bigl(S_g,{\mathcal L},{\rm SL}(3,{\mathbb C})\bigr)$ where
\begin{enumerate}
\item[(1)] ${\rm tr}(A_j^{-1})=\overline{{\rm tr}(A_j)}$ for $j=1,\,\ldots,\,3g-3$;
\item[(2)] $\sigma_-(Y_k)=\overline{\sigma_+(Y_k)}$ for $k=1,\,\ldots,\,2g-2$;
\item[(3)] the bend and bulge parameters $\theta_\Gamma(\gamma_j)$, $s_\Gamma(\gamma_j)$ are all zero.
\end{enumerate}

We summarise the above results in the following table.
$$
\begin{array}{|l|l|l|}
\hline
G & \hbox{Parameters} & \hbox{Equations} \\
\hline
{\rm SO}_0(2,1) & {\rm tr}(A_j) & {\rm tr}(A_j)=\overline{{\rm tr}(A_j)}={\rm tr}(A_j^{-1})=\overline{{\rm tr}(A_j^{-1})} \\
%\hbox{\cite{FN}} 
& & \sigma_+(Y_k)=\sigma_-(Y_k) \hbox{ given by }\eqref{eq:shape-SL}  \\
& & Q(Y_k)\hbox{ repeated roots} \\
& t_\Gamma(\gamma_j) & \theta_\Gamma(\gamma_j)=s_\Gamma(\gamma_j)=\phi_\Gamma(\gamma_j)=0 \\
\hline
{\rm SO}(3;{\mathbb C}) & {\rm tr}(A_j) & {\rm tr}(A_j)={\rm tr}(A_j^{-1}),\ \overline{{\rm tr}(A_j)}=\overline{{\rm tr}(A_j^{-1})} \\
%\hbox{\cite{Kou,Tan}} 
& & \sigma_+(Y_k)=\sigma_-(Y_k)\hbox{ given by }\eqref{eq:shape-SL}  \\
& & Q(Y_k) \hbox{ repeated roots} \\
& t_\Gamma(\gamma_j),\  \theta_\Gamma(\gamma_j) & s_\Gamma(\gamma_j)=\phi_\Gamma(\gamma_j)=0 \\
\hline
{\rm SL}(3,{\mathbb R}) & {\rm tr}(A_j) ,\ {\rm tr}(A_j^{-1}) & 
{\rm tr}(A_j)=\overline{{\rm tr}(A_j)}, \ {\rm tr}(A_j^{-1})=\overline{{\rm tr}(A_j^{-1})} \\
%\hbox{\cite{Gol}} 
& \sigma_+(Y_k),\ \sigma_-(Y_k) & \sigma_+(Y_k)=\overline{\sigma_+(Y_k)},\ 
\sigma_-(Y_k)=\overline{\sigma_-(Y_k)} \\
& \hbox{root of }Q(Y_k) & \\
& t_\Gamma(\gamma_j),\ s_\Gamma(\gamma_j) & \theta_\Gamma(\gamma_j)=\phi_\Gamma(\gamma_j)=0 \\
\hline
{\rm SU}(2,1) & {\rm tr}(A_j) & {\rm tr}(A_j)=\overline{{\rm tr}(A_j^{-1})},\  {\rm tr}(A_j^{-1})=\overline{{\rm tr}(A_j)} \\
%\hbox{\cite{PP}} 
& \sigma_+(Y_k)  &  \sigma_+(Y_k)=\overline{\sigma_-(Y_k)},\ 
\sigma_-(Y_k)=\overline{\sigma_+(Y_k)} \\
& \hbox{root of }Q(Y_k) &  \\
& t_\Gamma(\gamma_j), \ \phi_\Gamma(\gamma_j) & \theta_\Gamma(\gamma_j)=s_\Gamma(\gamma_j)=0 \\
\hline
{\rm SL}(3,{\mathbb C}) & {\rm tr}(A_j),\ {\rm tr}(A_j^{-1}) & \\
& \sigma_+(Y_k),\ \sigma_-(Y_k) & \\
& \hbox{root of }Q(Y_k) &  \\
& t_\Gamma(\gamma_j),\  \theta_\Gamma(\gamma_j), \ s_\Gamma(\gamma_j), \ \phi_\Gamma(\gamma_j) & \\
\hline
\end{array}
$$

We note that the conditions above essentially characterise the representations of each pants group $\rho\bigl(\pi_1(Y_k)\bigr)$.
To see this, we use the following theorem of Acosta.

\begin{proposition}[Theorem~1.1 of Acosta \cite{Aco}]\label{prop:acosta}
Let $\Gamma$ be a finitely generated group and let $\rho:\Gamma\longrightarrow {\rm SL}(3,{\mathbb C})$ be an
irreducible representation of $\Gamma$. Then
\begin{enumerate}
\item[(1)] If ${\rm tr}(A)\in{\mathbb R}$ for all $A\in\rho(\Gamma)$ then $\rho(\Gamma)$ is conjugate to a representation
of $\gamma$ to ${\rm SL}(3,{\mathbb R})$.
\item[(2)] If ${\rm tr}(A^{-1})=\overline{{\rm tr}(A)}$ for all $A\in\rho(\Gamma)$ then $\rho(\Gamma)$ is conjugate to a
representation in ${\rm SU}(3)$ or ${\rm SU}(2,1)$. In particular, if $\rho(\Gamma)$ contains loxodromic maps then it
is conjugate to a representation in ${\rm SU}(2,1)$.
\end{enumerate}
\end{proposition}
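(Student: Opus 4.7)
The plan is to exploit the fact that an irreducible finite-dimensional complex representation is determined up to conjugation by its character, together with Schur's lemma. In each part I produce a companion representation whose character, by the trace hypothesis, agrees with that of $\rho$; the resulting intertwiner is the key object, and its own symmetries yield the desired form of $\rho$.

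For (1) I would work with the complex conjugate representation $\bar\rho(\gamma)=\overline{\rho(\gamma)}$. The hypothesis ${\rm tr}\rho(\gamma)\in\mathbb{R}$ says precisely that $\rho$ and $\bar\rho$ have the same character, so by irreducibility there is $P\in{\rm GL}(3,\mathbb{C})$ with $\bar\rho=P\rho P^{-1}$. Iterating and invoking Schur, $\overline{P}P$ is scalar: $\overline{P}P=\lambda I$. Comparing with $P\overline{P}=\overline{\lambda}I$ forces $\lambda\in\mathbb{R}$, and the identity $\lambda^{3}=\det(\overline{P}P)=|\det P|^{2}>0$ then forces $\lambda>0$; this is the point at which the dimension $3$ enters. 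After rescaling $P$ by $\lambda^{-1/2}$ I may assume $\overline{P}P=I$. The assignment $\sigma:X\mapsto P^{-1}\overline{X}$ is then a conjugate-linear involution on $M_{3}(\mathbb{C})$, so its fixed set $V$ is a real form: $V\otimes_{\mathbb{R}}\mathbb{C}=M_{3}(\mathbb{C})$. Consequently $\det$ is a nonzero polynomial on $V$, and there exists invertible $Q\in V$. A direct check using $\overline{Q}=PQ$ and $\bar\rho=P\rho P^{-1}$ shows $\overline{Q^{-1}\rho(\gamma)Q}=Q^{-1}\rho(\gamma)Q$, placing $Q^{-1}\rho Q$ in ${\rm SL}(3,\mathbb{R})$.

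For (2) the companion representation is the conjugate-dual $\rho^{c}(\gamma)=\rho(\gamma^{-1})^{*}$, which is a homomorphism because $*$ is an anti-homomorphism. Its character is $\overline{{\rm tr}\rho(\gamma^{-1})}=\overline{\overline{{\rm tr}\rho(\gamma)}}={\rm tr}\rho(\gamma)$ by the hypothesis, so $\rho^{c}$ and $\rho$ are conjugate: $\rho^{c}=H\rho H^{-1}$ for some $H\in{\rm GL}(3,\mathbb{C})$. Rewritten, this is $\rho(\gamma)^{*}H\rho(\gamma)=H$, i.e.\ $\rho$ preserves the non-degenerate sesquilinear form with Gram matrix $H$. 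Taking $*$ of this identity and applying Schur (both $H$ and $H^{*}$ intertwine $\rho$ with $\rho^{c}$) yields $H^{*}=\mu H$ for some $\mu\in\mathbb{C}^{*}$; applying $*$ once more gives $|\mu|=1$, and replacing $H$ by $e^{i\theta/2}H$ with $\mu=e^{i\theta}$ makes $H$ Hermitian. By Sylvester $H$ has signature $(p,q)$ with $p+q=3$, so after a linear change of coordinates $\rho$ takes values in ${\rm SU}(3)$ or ${\rm SU}(2,1)$. A loxodromic element has eigenvalues of distinct moduli and cannot exist in the compact group ${\rm SU}(3)$, giving the final assertion.

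I expect the main obstacle to be the existence of an invertible $Q\in V$ in part (1): proving that a real form of $M_{n}(\mathbb{C})$ actually contains an invertible element. The cleanest route is the conjugate-linear involution argument above, which silently depends on the dimension being odd (here $n=3$) in order to force $\lambda>0$; in even dimension with $\lambda<0$ one meets a genuine quaternionic obstruction and the representation is instead defined over ${\rm SL}(n/2,\mathbb{H})$. Everything else is routine once one commits to the language of intertwiners and characters: the trace hypotheses translate directly into the existence of the companion representations, and Schur plus non-degeneracy do the rest.
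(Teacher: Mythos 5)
Your argument is correct. Note that the paper does not prove this statement at all: it is quoted verbatim as Theorem~1.1 of Acosta's paper \emph{Character varieties for real forms}, so there is nothing internal to compare against; what you have written is essentially the standard descent argument (conjugate or conjugate-dual companion representation, equality of characters, Schur's lemma, then a real or Hermitian structure), which is also the spirit of Acosta's proof, specialised to $n=3$ where the oddness of the dimension rules out the quaternionic case, exactly as you point out. Two small points you are implicitly using and could make explicit: the fact that two irreducible representations of an arbitrary (finitely generated) group with equal characters are conjugate rests on Burnside's theorem that an irreducible image spans $M_3(\mathbb{C})$; and in part (2) the signatures $(0,3)$ and $(1,2)$ are reduced to $(3,0)$ and $(2,1)$ by replacing $H$ with $-H$, which does not affect the preserved form's isometry group. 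Also, a loxodromic element in the paper's sense need only have some eigenvalue of non-unit modulus (not pairwise distinct moduli), but that is all your exclusion of ${\rm SU}(3)$ requires.
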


Our result is

\begin{theorem}\label{thm-pants}
Let $\Gamma=\langle A,B,C : CBA=I\rangle$ be an irreducible subgroup of ${\rm SL}(3,{\mathbb C})$.
Let $\sigma_+$ and $\sigma_-$ be the shape invariants given by \eqref{eq:shape+} and
\eqref{eq:shape1}. 
\begin{enumerate}
\item[(1)] If ${\rm tr}(A)={\rm tr}(A^{-1})$, ${\rm tr}(B)={\rm tr}(B^{-1})$, ${\rm tr}(C)={\rm tr}(C^{-1})$, 
$\sigma_+=\sigma_-$ and $Q(\Gamma)$ has repeated roots, then up to conjugacy $\Gamma<{\rm SO}(3;{\mathbb C})$;
\item[(2)] If ${\rm tr}(A)$, ${\rm tr}(A^{-1})$, ${\rm tr}(B)$, ${\rm tr}(B^{-1})$, ${\rm tr}(C)$, ${\rm tr}(C^{-1})$, 
$\sigma_+$ and $\sigma_-$ are all real then up to conjugacy $\Gamma<{\rm SL}(3,{\mathbb R})$;
\item[(3)] If ${\rm tr}(A^{-1})=\overline{{\rm tr}(A)}$, ${\rm tr}(B^{-1})=\overline{{\rm tr}(B)}$, ${\rm tr}(C^{-1})=\overline{{\rm tr}(C)}$
and $\sigma_-=\overline{\sigma_+}$ then up to conjugacy $\Gamma<{\rm SU}(2,1)$.
\end{enumerate}
\end{theorem}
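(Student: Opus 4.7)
The overall plan is to treat parts (2) and (3) as consequences of Proposition~\ref{prop:acosta} combined with a trace-extension argument, and to handle part (1) separately by producing an invariant symmetric bilinear form. Since $C=(BA)^{-1}$, the group $\Gamma$ is a two-generator subgroup of ${\rm SL}(3,{\mathbb C})$, and by a theorem of Lawton type the character ring is generated as a polynomial algebra by a finite list of word traces, essentially ${\rm tr}(A)$, ${\rm tr}(B)$, ${\rm tr}(A^{-1})$, ${\rm tr}(B^{-1})$, ${\rm tr}(AB)$, ${\rm tr}(B^{-1}A^{-1})$, ${\rm tr}(AB^{-1})$, ${\rm tr}(A^{-1}B)$ and ${\rm tr}([A,B])$. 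The six traces of $A,B,C$ and their inverses are given directly by hypothesis, while the three remaining traces are exactly the data packaged into the shape invariants $\sigma_\pm$ and the choice of root of the commutator equation $Q(\Gamma)$.

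For part (2), I would first show that the hypotheses---real generator traces, real shape invariants, and a real chosen root of $Q(\Gamma)$---force all nine generating word traces above to be real. Since every ${\rm tr}(W)$ for a word $W$ in $A,B$ is a polynomial with integer coefficients in these nine, every element of $\Gamma$ then has real trace, and Proposition~\ref{prop:acosta}(1) conjugates $\Gamma$ into ${\rm SL}(3,{\mathbb R})$. For part (3) an entirely parallel argument applies: the conditions ${\rm tr}(X^{-1})=\overline{{\rm tr}(X)}$ for $X\in\{A,B,C\}$ together with $\sigma_-=\overline{\sigma_+}$ (and a correctly conjugated root of $Q$) force the symmetry ${\rm tr}(W^{-1})=\overline{{\rm tr}(W)}$ on each of the nine generating traces and hence on every word $W$. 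Because $\Gamma$ contains loxodromic elements, Proposition~\ref{prop:acosta}(2) then places it in ${\rm SU}(2,1)$ up to conjugacy.

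Part (1) requires a different tack, since Acosta's theorem does not address ${\rm SO}(3;{\mathbb C})$. The plan is to observe that under ${\rm tr}(X)={\rm tr}(X^{-1})$ each of $A,B,C$ has $1$ as an eigenvalue and the other two eigenvalues reciprocal, which is precisely the trace signature of an element of ${\rm SO}(3;{\mathbb C})$. Using the normal form for an irreducible pants triple from earlier sections, I would solve explicitly for a non-degenerate symmetric bilinear form $Q_0$ preserved by $A$ and $B$; the equality $\sigma_+=\sigma_-$ supplies precisely the one additional scalar constraint needed for such a form to exist, while the repeated-root condition on $Q(\Gamma)$ eliminates the discrete ambiguity that would otherwise allow two incompatible forms. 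Irreducibility together with $\det=1$ then yields $\Gamma<{\rm SO}(3;{\mathbb C})$ after conjugation.

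The main obstacle is the trace-extension step underlying parts (2) and (3): one must verify that the shape invariants together with the commutator equation really do recover ${\rm tr}(AB^{-1})$, ${\rm tr}(A^{-1}B)$ and ${\rm tr}([A,B])$ explicitly, and with the right symmetry under the hypotheses. The square-root ambiguity in the formula for $\sigma_\pm$ and the choice of root of $Q(\Gamma)$ must be tracked carefully, so that the assumed symmetry on $\sigma_\pm$ genuinely descends to each individual hidden word trace instead of merely to an unordered pair of them.
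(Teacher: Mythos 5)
Your treatment of parts (2) and (3) is essentially the paper's own argument: under the stated trace and shape-invariant symmetries, Lawton's theorem makes every word trace a polynomial in the nine basic invariants, the symmetry propagates to all of $\Gamma$, and Proposition~\ref{prop:acosta} then conjugates $\Gamma$ into ${\rm SL}(3,{\mathbb R})$, respectively ${\rm SU}(2,1)$. Your caveat about tracking the root of $Q(\Gamma)$ (so that the symmetry descends to ${\rm tr}[A,B]$ itself rather than to the unordered pair of roots) is a genuine subtlety which the paper's proof passes over rather quickly, so flagging it is reasonable rather than a defect.

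Part (1), however, contains a genuine gap. You propose to ``solve explicitly for a non-degenerate symmetric bilinear form preserved by $A$ and $B$'' using ``the normal form for an irreducible pants triple from earlier sections,'' and you justify solvability by a count: $\sigma_+=\sigma_-$ is ``precisely the one additional scalar constraint'' and the repeated-root condition ``eliminates the discrete ambiguity.'' No such normal form for general irreducible ${\rm SL}(3,{\mathbb C})$ triples is available (Goldman's matrices in Section~\ref{sec:SL(3,R)} are only for positive loxodromic triples in ${\rm SL}(3,{\mathbb R})$), and the dimension count is not an argument: a priori the space of symmetric forms invariant under both $A$ and $B$ could be zero, and nothing in your sketch shows the relevant linear system is solvable under exactly these hypotheses. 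Indeed the same trace conditions (${\rm tr}(X)={\rm tr}(X^{-1})$ for $X=A,B,C$, $\sigma_+=\sigma_-$, $Q$ with a repeated root) are also satisfied by the reducible groups of type \eqref{eq-red-A-B-C}, which need not preserve any nondegenerate form; so some mechanism must use irreducibility to separate the two possibilities, and your sketch never locates where this happens. The paper's proof supplies exactly this missing content: Theorem~\ref{thm:branch-fact} factorises the repeated-root condition as $(\sigma+a+b+c-3)^2\,T_2(\sigma)=0$, Theorem~\ref{thm-red} shows the branch $\sigma=3-a-b-c$ forces reducibility (so irreducibility excludes it), Lemma~\ref{lem-shape-irred} identifies the $T_2$ branch with the traces of groups in the image of $\Phi_*$, and Lawton's uniqueness statement (Theorem~\ref{thm-lawton-new}(3)) then conjugates $\Gamma$ onto such a group, hence into ${\rm SO}(3;{\mathbb C})$. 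If you want to keep your ``invariant form'' viewpoint, the repairable route is a self-duality argument: the hypotheses say that $\rho$ and its dual $\rho^*$ have the same nine invariants (the repeated root of $Q$ being what matches the commutator traces), so Lawton's uniqueness for irreducible pairs gives $\rho\cong\rho^*$; an irreducible self-dual three-dimensional representation preserves a nondegenerate bilinear form, unique up to scale, which cannot be antisymmetric in odd dimension, hence is symmetric. That argument still has to be written out, and as it stands your proposal does not contain it.
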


\section{Background on Fenchel-Nielsen coordinates}

\subsection{Geometrical Fenchel-Nielsen co-ordinates}\label{sec:geo-FN}

In \cite{F-N} Fenchel and Nielsen construct global coordinates for $\mathcal{T}(S_g)$, giving it the structure of a differentiable 
manifold. These coordinates depend on a choice of a curve system ${\mathcal L}$ on $S_g$ as given in
Definition~\ref{def-curve-system}.
Fenchel-Nielsen coordinates consist of $3g-3$ length coordinates and $3g-3$ twist coordinates. 
The length coordinates $\bigl(\ell_X(\gamma_1),\,\ldots,\,\ell_X(\gamma_{3g-3})\bigr)$ 
for a surface $X$ in Teichm\"uller space are the
hyperbolic lengths of the geodesics $\gamma_j$ measured using the hyperbolic structure on $X$.
In order to define the twists, we need to do a little more work. Consider $[\gamma_j]\in{\mathcal L}$. 
Then $\gamma_j$ either lies on the boundary of two distinct pairs of pants $Y$ and $Y'$ or it
corresponds to two boundary curves of a single pair of pants $Y$. Let $\alpha_j$ be a homotopically non-trivial simple closed 
curve in $Y\cup Y'$ (respectively $Y$) intersecting $\gamma_j$ minimally, that is in two points (respectively one point). 
We construct a piecewise geodesic curve in the homotopy class of $\alpha_j$ as follows. It consists of (a) two arcs $\delta_j$ and 
$\delta'_j$ (respectively a single arc $\delta_j$) contained in $\gamma_j$ and (b) two simple geodesic arcs $\beta_j\subset Y$,
$\beta'_j\subset Y'$ (respectively a single simple geodesic arc $\beta_j\subset Y$) meeting $\gamma_j$ orthogonally at 
their endpoints. 
Elementary hyperbolic geometry shows that $\delta_j$ and $\delta'_j$ have the same length. The twist $t_X(\gamma_j)$ is
the signed difference between the hyperbolic length of $\delta_j$ (measured using the hyperbolic structure on $X$) and the 
same length on some fixed reference surface $X_0$, and where the sign of $t_X(\gamma_j)$ is determined by a 
choice of orientation on $\gamma_j$.
For example we could take $X_0$ to be the surface where each $\delta_j$ has length zero,
that is where $\alpha_j$ and $\gamma_j$ are orthogonal simple closed geodesics, but this choice is not necessary. As a 
relative invariant, the twist is independent of the choice of $\alpha_j$. 

We define the Fenchel-Nielsen coordinates of $\mathcal{T}(S_g)$ with respect to a given 
curve system $\mathcal{L}=\{[\gamma_1],\ldots,[\gamma_{3g-3}]\}$ to be the map 
$FN:\mathcal{T}(S_g)\rightarrow\mathbb{R}_+^{3g-3}\times\mathbb{R}^{3g-3}$ given by
$$
FN_X=(\ell_X(\gamma_1),\ldots,\ell_X(\gamma_{3g-3}),t_X(\gamma_1),\ldots,t_X(\gamma_{3g-3})).
$$
The theorem of Fenchel and Nielsen says that these are global coordinates in the sense that two marked surfaces with 
distinct hyperbolic structures give different values of these parameters, and each value of the parameters gives a hyperbolic
structure on the surface.

\subsection{Algebraic Fenchel-Nielsen coordinates}\label{sec:alg-FN}

We now reinterpret Fenchel-Nielsen coordinates in terms of 
${\rm Hom}(\pi_1(S_g), {\rm SL}(2,{\mathbb R}))// {\rm SL}(2,{\mathbb R})$, 
the deformation space of representations of $\pi_1(S_g)$ to ${\rm SL}(2,{\mathbb R})$ up to conjugacy. 
This is equivalent to the Fuchsian space from Definition~\ref{def-Fuchs-space} under the identification of
${\rm PSL}(2,{\mathbb R})$ and ${\rm SO}_0(2,1)$, see Section~\ref{sec-irred}.
Let $Y$ be one of the
pairs of pants, that is $Y$ is a component of $S_g\backslash\cup_{j=1}^{3g-3}\gamma_j$. 
Then $\pi_1(Y)$ is a free group on two generators. 
We can take the generators to be the homotopy classes of curves corresponding to two of the boundary curves. Then the third
boundary curve corresponds to the product of these two generators. In fact, it is more convenient to regard $\pi_1(Y)$
as having three generators, corresponding to the three boundary components, with a single relation that their product is
the identity. That is, if the homotopy classes of $\partial Y$ are $[\alpha]$, $[\beta]$, $[\gamma]$ then 
$$
\pi_1(Y)=\bigl\langle [\alpha],\,[\beta],\,[\gamma]\, :\,  [\gamma][\beta][\alpha]=id\bigr\rangle.
$$
Consider a representation $\rho:\pi_1(Y)\longrightarrow {\rm SL}(2,{\mathbb R})$ and write $A=\rho([\alpha])$,
$B=\rho([\beta])$ and $C=\rho([\gamma])$. We then have $CBA=I$. In other words,
$$
\rho\bigl(\pi_1(Y)\bigr)=\Gamma=\langle A,\,B,\,C\,:\,CBA=I\rangle.
$$
A classical theorem of Fricke and Vogt (see Theorem~\ref{thm-fv} below for a precise statement) says that if 
$\rho$ is irreducible then $\rho\bigl(\pi_1(Y)\bigr)$ is
completely determined up to conjugacy by ${\rm tr}(A)$, ${\rm tr}(B)$ and ${\rm tr}(C)$. Furthermore, in order for $A$, $B$
and $C$ to represent the boundaries of a pair of pants, they must all be hyperbolic elements, their axes should be disjoint
and not separate each other. In this case, a well known result, see Gilman and Maskit \cite{G-M}, says that
${\rm tr}(A){\rm tr}(B){\rm tr}(C)<0$. We therefore normalise the representation be supposing that each of 
${\rm tr}(A)$, ${\rm tr}(B)$, ${\rm tr}(C)$ lies in the interval $(-\infty, -2)$. We then note that
${\rm tr}(A)=-2\cosh\bigl(\ell_X(\alpha)/2\bigr)$ where, as above, $\ell_X(\alpha)$ is the length with respect 
to the hyperbolic metric on $X$ of the geodesic $\alpha$ in the homotopy class $[\alpha]$, and similarly for 
${\rm tr}(B)=-2\cosh\bigl(\ell_X(\beta)/2\bigr)$ and ${\rm tr}(C)=-2\cosh\bigl(\ell_X(\gamma)/2\bigr)$.

We now discuss the algebraic interpretation of how to attach two pairs of pants and how to close a handle, see 
Parker and Platis \cite{PP}. First consider attaching two pairs of pants. 
Suppose that $Y$ and $Y'$ are two pairs of pants with a hyperbolic structure and geodesic boundary. 
Write $\Gamma=\rho\bigl(\pi_1(Y)\bigr)$ and $\Gamma'=\rho\bigl(\pi_1(Y')\bigr)$, with 
$$
\Gamma=\langle A,\,B,\,C\,:\,CBA=I\rangle, \quad 
\Gamma'=\langle A',\,B',\,C'\,:\,C'B'A'=I\rangle,
$$ 
for the images of their fundamental groups under $\rho$. We want to glue them along 
the boundary curves $\alpha$ and $\alpha'$. In order to do so, $\alpha$ and $\alpha'$ must have the same length and 
opposite orientation. Algebraically, this says that if $A=\rho([\alpha])$ and $A'=\rho([\alpha'])$ then $A'$ is conjugate to
$A^{-1}$. Without loss of generality, we assume $A'=A^{-1}$. This gives a representation of $\pi_1(Y\cup_\alpha Y')$ as 
the free product with amalgamation along $\langle A\rangle=\langle A'\rangle$:
\begin{eqnarray*}
\rho\bigl(\pi_1(Y\cup_\alpha Y')\bigr)
& = & \Gamma *_{\langle A\rangle} \Gamma' \\
& = & \langle A,B,C\,:CBA=I\rangle *_{\langle A\rangle}\langle A',B',C\,:\,C'B'A'=I\rangle \\
& = & \langle B,C,B',C'\,:\, CBC'B'=I\rangle.
\end{eqnarray*}
To obtain the relation, we combine $AA'=CBA=C'B'A'=I$:
$$
(CB)(C'B')=A^{-1}{A'}^{-1}=(A'A)^{-1}=I.
$$

Now consider closing a handle. Suppose $Y$ is a pair of pants
with a hyperbolic structure and geodesic boundary. Write $\Gamma=\langle A,\,B,\,C\,:\,CBA=I\rangle$ for
the image of its fundamental group under $\rho$. We want to glue two boundary components $\alpha$ and $\beta$. 
We write them as $A=\rho([\alpha])$ and $B=\rho([\beta])$.  
As above, this means $\alpha$ and $\beta$ have the same length and opposite orientation. Algebraically, this
means $B$ is conjugate to $A^{-1}$. Suppose that the conjugating map is denoted by $D$, so
$B=DA^{-1}D^{-1}$. We can therefore form the HNN extension
\begin{eqnarray*}
\Gamma *_{\langle D\rangle}
& = & \langle A,(DA^{-1}D^{-1}),C\,:\, C(DA^{-1}D^{-1})A=I\rangle *_{\langle D\rangle} \\
& = & \langle A,C,D\,:\, C[D,A^{-1}]=I\rangle.
\end{eqnarray*}

Suppose ${\mathcal L}$ is chosen in such a way $S_g$ is obtained using the following process. First, attach 
$2g-2$ pairs of pants to form a $2g$-holed sphere, so that the $2g$ boundary curves form $g$ pairs where each pair is in the
same pair of pants. Secondly, close the $g$ handles by identifying curves in the same pair of pants. Using induction
on the attaching step above, we see that the $2g$-holed sphere is represented by a group 
$$
\langle B_1,C_1,\,\ldots B_g,C_g : C_1B_1\cdots B_gC_g=I\rangle.
$$
Closing the $g$ handles replaces each pair $B_k,C_k$ with a commutator. Thus we obtain the standard presentation 
for a surface group. In what follows, it is not necessary to make this choice. The main difference would be that we would close
handles by identifying boundary curves in different pairs of pants. 

We now discus how to interpret the Fenchel-Nielsen twist $t_\Gamma(\alpha)$ parameter around $\alpha$. In the above construction
we made a choice when we performed the gluing. The ambiguity in that choice is exactly given by an element $K$ of the
centraliser of $A$. That is, $K$ commutes with $A$ and so must have the same eigenvectors. In the first case,
we can conjugate $\langle A',B',C'\,:\, C'B'A'=I\rangle$ by $K$ to obtain
$$
\Gamma*_{\langle A\rangle}K\Gamma'K^{-1}=\langle B,C,KB'K^{-1},KC'K^{-1}\,:\, CB(KC'K^{-1})(KB'K^{-1})=I\rangle.
$$
In the second case, we replace the conjugating map $D$ with $DK$ to obtain
$$
\Gamma *_{\langle DK\rangle}=\langle A,C,DK\,:\, C[DK,A^{-1}]C=I\rangle.
$$
Since $K$ commutes with $A$ we still have $A'=KA'K^{-1}=KA^{-1}K^{-1}=A^{-1}$ and $B=DKA^{-1}(DK)^{-1}=DA^{-1}D^{-1}$.

In order to relate $K$ to the twist $t_\Gamma(\alpha)$ we could use the trace of $K$ in the same way that we 
related $\ell_X(\alpha)$ to ${\rm tr}(A)$. However, that does not capture the sign of the twist. 
Instead, we use an eigenvalue.  Since $A$ is 
hyperbolic (loxodromic) and $K$ is in its centraliser $Z(A)$, they must have the same eigenvectors. The eigenvalues of $A$ are
$-e^{\ell_X(\alpha)/2}$ and $-e^{-\ell_X(\alpha)/2}$ and we denote the associated eigenvectors by 
${\bf v}_+(A)$ and ${\bf v}_-(A)$, respectively. We suppose
${\rm tr}(K)>0$ and define the twist by saying the eigenvlaue $\lambda_K$ of $K$ associated to the eigenvector ${\bf v}_+(A)$ 
is $e^{t_\Gamma(\alpha)/2}$. Note that the choice of this eigenvalue is equivalent to a choice of orientation of $\alpha$. 
The twist parameter could also be parametrised using traces. For example, in \cite{Mas} Maskit computes the 
Fenchel-Nielsen coordinates explicitly using matrices.

In the above definition, we made a choice of $Y$ rather than $Y'$. We now show $t_\Gamma(\alpha)$ is independent of this
choice. Swapping the roles of $Y$ and $Y'$, means we conjugate $\langle A,B,C\,:\,CBA=I\rangle$ by $K^{-1}$ to obtain
$$
\langle K^{-1}BK,K^{-1}CK,B',C'\,:\, (K^{-1}CK)(K^{-1}BK)C'B'=I\rangle.
$$
Thus the twist is given by the eigenvalue $\lambda_{K^{-1}}$ of $K^{-1}$ associated to ${\bf v}_+(A')={\bf v}_-(A)$, and this 
eigenvalue is again $e^{t_\Gamma(\alpha)}$. Thus this definition of $t_\Gamma(\alpha)$ does not depend on a 
choice of $Y$ or $Y'$.
Similarly, when closing a handle the definition does not depend on the choices we made. 

Combining all of the pairs of pants associated to the curve system $\mathcal{L}=\{[\gamma_1],\ldots,[\gamma_{3g-3}]\}$ on $S_g$,
we obtain algebraic Fenchel Nielsen coordinates associated to the representation $\Gamma=\rho\bigl(\pi_1(S_g)\bigr)$ as 
$$
FN_\rho=\bigl({\rm tr}(A_1),\,\ldots,\,{\rm tr}(A_{3g-3}),t_\Gamma(\gamma_1),\,\ldots,\,t_\Gamma(\gamma_{3g-3})\bigr)
$$
where $A_j=\rho([\gamma_j])$ and $K_j\in Z(A_j)$ has eigenvalue $e^{t_\Gamma(\gamma_j)/2}$ associated to the 
eigenvector of $A_j$
with eigenvalue of largest absolute value. 

Next, we mention some examples of our interest for the develop of this project:

\begin{itemize}
	\item For $G={\rm SL}(2,\mathbb{C})$ Kourouniotis \cite{Kou} and Tan \cite{Tan} (independently) generalise the 
	Fenchel-Nielsen coordinates for quasi-Fuchsian representations.
	
	\item For $G={\rm SL}(3,\mathbb{R})$ Goldman in \cite{Gol} generalises the Fenchel-Nielsen coordinates for the 
	space of convex projective structures.
	
	\item For $G={\rm SU}(2,1)$ Parker and Platis in \cite{PP} generalise the Fenchel-Nielsen coordinates for the 
	space of complex hyperbolic quasi-Fuchsian representations.
\end{itemize}

We remark that the space of convex projective structures studied by Goldman is the Hitchin component of
the ${\rm SL}(3,{\mathbb R})$ character variety of $\pi_1(S_g)$ \cite{Choi-Gol}. In his PhD thesis \cite{Zhang}, Tengren Zhang 
defined Fenchel-Nielsen coordinates for the Hitchin component of the ${\rm SL}(n,{\mathbb R})$ charaxter variety for all
$n\ge 2$.

In this paper we generalise Fenchel-Nielsen coordinates for the case when $G={\rm SL}(3,\mathbb{C})$. All of the four 
cases mentioned 
above give representations of $\pi_1(\Sigma_g)$ to subgroups of ${\rm SL}(3,\mathbb{C})$ our coordinates should be a direct 
generalisation in each case. Since we lose many of the geometric features, we are going to use the algebraic version using as a 
main tools traces and eigenvalues of the representations. 

Discrete subgroups of ${\rm SL}(3,{\mathbb C})$ and their
action on ${\mathbb C}{\mathbb P}^2$ have been studied by Cano, Navarrete and Seade \cite{C-N-S}. 
In particular, Cano, Parker and Seade \cite{C-P-S} showed that  the action on ${\mathbb C}{\mathbb P}^2$ of a discrete 
subgroup of ${\rm SO}_0(2,1)$ (the image under the irreducible representation of ${\rm PSL}(2,{\mathbb R})$) has
three components in its domain of discontinuity and a connected limit set. It would be interesting to investigate how
this varies for a surface group as we vary the representation away from ${\rm SO}_0(2,1)$. Our Fenchel-Nielsen
coordinates give a framework for doing so.

\section{Complex projective Fenchel-Nielsen coordinates}

In this section we are going to mimic the construction from Section~\ref{sec:alg-FN} but for the space
${\rm Hom}(\pi_1(S_g), {\rm SL}(3,\mathbb{C}))// {\rm SL}(3,\mathbb{C})$ of representations of $\pi_1(S_g)$ to
${\rm SL}(3,C)$ up to conjugation. The classical trichotomy of elements of ${\rm SL}(2,\mathbb{C})$, 
can be generalised to ${\rm SL}(3,{\mathbb C})$ as follows, see Theorem~4.3.1 on page 112 of 
Cano, Navarrete and Seade \cite{C-N-S}:

\begin{theorem} \label{thm-SL3C-class}
Every element in ${\rm SL}(3,\mathbb{C})\backslash\{I\}$ is one and only one of the following classes: 
elliptic (diagonalizable whit unitary eigenvalues), parabolic (non-diagonalizable) or 
loxodromic (diagonalizable with non-unitary eigenvalues).
	\begin{enumerate}		
		\item [(i)] An elliptic transformation belongs to one and only one of the following classes: regular (it has pairwise 
		different eigenvalues) or conjugate to a complex reflection (two eigenvalues are repeated).		
		\item [(ii)] A parabolic transformation  belongs to one and only one of the following classes: unipotent 
		(it has eigenvalues equal to one), or ellipto-parabolic (it is not unipotent).
		
		\item [(iii)] A loxodromic element belongs to one and only one of the following four classes: 
		loxo-parabolic (only have two eigenvalues with different modulus), complex homothety, 
		screw (different eigenvalues but two of them have the same modulus) or strongly loxodromic 
		(different eigenvalues with different modulus).
	\end{enumerate}
\end{theorem}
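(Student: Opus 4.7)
The plan is to reduce the classification to an analysis of the Jordan canonical form, since every element of ${\rm SL}(3,{\mathbb C})$ is conjugate to its Jordan form and every distinguishing property invoked in the statement --- diagonalizability, moduli of eigenvalues, equalities among eigenvalues --- is a conjugacy invariant. For a $3\times 3$ complex matrix the Jordan types are exhausted by (a) three $1\times 1$ blocks, (b) one $2\times 2$ block together with one $1\times 1$ block, or (c) a single $3\times 3$ block. The determinant constraint $\lambda_1\lambda_2\lambda_3=1$ restricts which eigenvalue multisets can arise, but does not affect this list.

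First I would establish the top-level trichotomy. Matrices of Jordan type (a) are diagonalizable, while those of types (b) and (c) are not. For a diagonalizable matrix one has the dichotomy: either every eigenvalue has modulus $1$, in which case $A$ is elliptic, or at least one eigenvalue has non-unit modulus, in which case $A$ is loxodromic. A non-diagonalizable matrix is parabolic by definition, so disjointness and exhaustiveness at this level are immediate. Then I would handle the subclassifications: for elliptic, split according to whether the three unitary eigenvalues are pairwise distinct (regular) or at least two coincide (conjugate to a complex reflection fixing a hyperplane in ${\mathbb C}{\mathbb P}^2$); for parabolic, whether all eigenvalues equal $1$ as forced by $\det A=1$ (unipotent) or not (ellipto-parabolic); for loxodromic, the pattern of the multiset of eigenvalues together with their moduli, giving strongly loxodromic, screw, loxo-parabolic or complex homothety.

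The main obstacle lies in the loxodromic subclassification. Under the determinant constraint, a short calculation shows that an eigenvalue pattern $\{\lambda,\lambda,\mu\}$ with $|\lambda|=|\mu|$ forces $|\lambda|^3=1$ and hence $|\lambda|=|\mu|=1$, so such a configuration is elliptic rather than loxodromic. Consequently the loxodromic regime in ${\rm SL}(3,{\mathbb C})$ is covered by exactly three configurations --- three distinct eigenvalues of three distinct moduli (strongly loxodromic), three distinct eigenvalues with only two distinct moduli (screw), and two distinct eigenvalues of different moduli (loxo-parabolic) --- with the ``complex homothety'' subclass a scalar/degenerate case retained for consistency with the classification on ${\rm PSL}(3,{\mathbb C})$. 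Once this reconciliation is made, verifying that the four subclasses are pairwise disjoint and jointly cover every loxodromic element reduces to enumerating the possible partitions of $\{|\lambda_1|,|\lambda_2|,|\lambda_3|\}$ compatible with the equality pattern of $\{\lambda_1,\lambda_2,\lambda_3\}$ and with $\lambda_1\lambda_2\lambda_3=1$.
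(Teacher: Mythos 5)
The paper offers no proof of this statement: it is quoted verbatim (with some infelicities) from Theorem~4.3.1 of Cano--Navarrete--Seade \cite{C-N-S}, so there is no in-paper argument to compare against. Your overall strategy --- reduce everything to the Jordan canonical form and enumerate eigenvalue patterns subject to $\lambda_1\lambda_2\lambda_3=1$ --- is the right one, and your observation that a repeated eigenvalue $\{\lambda,\lambda,\mu\}$ with $|\lambda|=|\mu|$ forces $|\lambda|^3=1$ and hence ellipticity is correct and is exactly the computation needed to rule out certain configurations.

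However, your resolution of the loxodromic subclassification is wrong, and the error is not merely terminological. In the source classification, \emph{loxodromic} means ``some eigenvalue of non-unit modulus'' (not ``diagonalizable with non-unitary eigenvalues''), \emph{parabolic} means ``non-diagonalizable with all eigenvalues unitary'', and \emph{loxo-parabolic} is the \emph{non-diagonalizable} loxodromic class, conjugate to a Jordan block $\bigl(\begin{smallmatrix}\lambda & 1\\ 0&\lambda\end{smallmatrix}\bigr)\oplus(\lambda^{-2})$ with $|\lambda|\neq1$; the \emph{complex homothety} is its diagonalizable counterpart ${\rm diag}(\lambda,\lambda,\lambda^{-2})$. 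Under your reading you assign the diagonalizable pattern ``two distinct eigenvalues of different moduli'' to loxo-parabolic and then dismiss the complex homothety as ``a scalar/degenerate case'' --- but a scalar matrix in ${\rm SL}(3,{\mathbb C})$ is $\omega^k I$ with unitary eigenvalues and is therefore elliptic, and your loxo-parabolic class coincides set-theoretically with the complex homotheties, so your four subclasses are neither correctly described nor pairwise disjoint. The same misreading leaves the genuinely non-diagonalizable matrices with a non-unitary repeated eigenvalue (e.g.\ eigenvalues $2,2,\tfrac14$ with a $2\times2$ Jordan block) stranded in your ``parabolic'' class, where the unipotent/ellipto-parabolic dichotomy, read with its standard meaning, does not cover them. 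The fix is to start from the corrected top-level definitions above; with those in place your Jordan-form enumeration does go through and yields exactly the four loxodromic subclasses.
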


We are going to be interested in irreducible, faithful and discrete representations of the fundamental group of a surface in 
${\rm SL}(3,\mathbb{C})$ where all the elements of the representation will be strongly loxodromic maps.
 
 Using a result of Navarrete, Theorem 7.3 of \cite{Nav}, see also Theorem 4.3.3 of Cano-Navarrete-Seade \cite{C-N-S}
we can use ${\rm tr}(A)$ and ${\rm tr}(A^{-1})$ to determine whether or not
$A\in{\rm SL}(3,{\mathbb C})$ is strongly loxodromic. 

\begin{proposition}\label{prop:nav-tr}
Define
$$
F(x,y)=x^2y^2-4(x^3+y^3)+18xy-27.
$$
The map $A\in{\rm SL}(3,{\mathbb C})$ is strongly loxodromic if and only if
\begin{enumerate}
\item[(1)] either ${\rm tr}(A^{-1})=\overline{{\rm tr}(A)}$ and $F\bigl({\rm tr}(A),{\rm tr}(A^{-1})\bigr)>0$, 
\item[(2)] or ${\rm tr}(A^{-1})\neq \overline{{\rm tr}(A)}$ and $F\bigl{(\rm tr}(A),{\rm tr}(A^{-1})\bigr)\neq 0$.
\end{enumerate}
\end{proposition}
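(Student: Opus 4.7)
The plan is to identify $F(x,y)$ as the discriminant of the characteristic polynomial $p_A(t)=t^3-xt^2+yt-1$ of $A$ (using $\det A=1$). A direct application of the standard cubic-discriminant formula gives $\mathrm{disc}(p_A)=F(x,y)$, so $F(x,y)=0$ if and only if $A$ has a repeated eigenvalue.

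Next I characterise when $\mathrm{tr}(A^{-1})=\overline{\mathrm{tr}(A)}$. Writing $\Lambda=\{\lambda_1,\lambda_2,\lambda_3\}$ for the eigenvalue multiset, and using $\prod\lambda_j=1$, the multiset $\{1/\bar\lambda_j\}$ has elementary symmetric functions $\bar y,\,\bar x,\,1$, so $\Lambda=\{1/\bar\lambda_j\}$ as multisets if and only if $y=\bar x$. Since the involution $\iota:\lambda\mapsto 1/\bar\lambda$ fixes exactly the unit circle $S^1$, $\iota$-invariance of $\Lambda$ forces either (a) all three $\lambda_j$ on $S^1$ (so $A$ is elliptic), or (b) exactly one on $S^1$ and the other two forming a pair $\{\lambda,1/\bar\lambda\}$ with $|\lambda|\neq 1$, giving moduli $1,\,|\lambda|,\,|\lambda|^{-1}$ pairwise distinct, so that $A$ is strongly loxodromic.

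To settle part~(1) I compute the sign of $F$ in the two subcases explicitly. For distinct $\lambda_j=e^{i\theta_j}$ on $S^1$ with $\sum\theta_j\equiv 0\pmod{2\pi}$,
\[
\prod_{i<j}(\lambda_i-\lambda_j)^2=-64\prod_{i<j}\sin^2\!\bigl((\theta_i-\theta_j)/2\bigr)<0,
\]
so $F<0$. For the paired configuration, parametrising $\lambda_1=e^{-2i\beta}$, $\lambda_2=re^{i\beta}$, $\lambda_3=r^{-1}e^{i\beta}$ with $r>0$, $r\neq 1$, a short factorisation gives
\[
\prod_{i<j}(\lambda_i-\lambda_j)=(r-r^{-1})\bigl[2\cos(3\beta)-(r+r^{-1})\bigr],
\]
which is a nonzero real number because $r+r^{-1}>2\geq 2\cos(3\beta)$; so $F>0$. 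Combined with the multiset characterisation this yields the equivalence in~(1).

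For part~(2), $y\neq\bar x$ excludes the elliptic case by the multiset characterisation and $F\neq 0$ excludes repeated eigenvalues, leaving only the diagonalisable loxodromic classes with three distinct eigenvalues. The main obstacle, and the technical heart of the argument, is to rule out those classes whose moduli are not pairwise distinct (the screw and complex-homothety cases in the classification of \cite{C-N-S}): this requires tracking the constraints on $(x,y)$ forced by each eigenvalue-modulus configuration. Rather than reproduce this case-by-case analysis, I would invoke Navarrete's Theorem~7.3 of \cite{Nav} (equivalently Theorem~4.3.3 of \cite{C-N-S}) directly for this step.
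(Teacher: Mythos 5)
Your identification of $F$ as the discriminant of $\chi_A(t)=t^3-{\rm tr}(A)\,t^2+{\rm tr}(A^{-1})\,t-1$, your multiset characterisation of the condition ${\rm tr}(A^{-1})=\overline{{\rm tr}(A)}$, and the two sign computations in part (1) are all correct, and they already go beyond what the paper contains: Proposition~\ref{prop:nav-tr} is not proved in the paper at all, it is simply quoted from Navarrete (Theorem~7.3 of \cite{Nav}, Theorem~4.3.3 of \cite{C-N-S}). So for case (1) your argument is a genuine self-contained proof.

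The problem is the step you defer in part (2), and it is not a step that can be filled in, because the implication you need there is false as the proposition is stated. Take $A={\rm diag}(2,-2,-1/4)\in{\rm SL}(3,{\mathbb C})$. Then ${\rm tr}(A)=-1/4$ and ${\rm tr}(A^{-1})=-4$ are real and distinct, so ${\rm tr}(A^{-1})\neq\overline{{\rm tr}(A)}$, and $F\bigl({\rm tr}(A),{\rm tr}(A^{-1})\bigr)=\prod_{i<j}(\lambda_i-\lambda_j)^2=16\cdot(9/4)^2\cdot(7/4)^2=3969/16\neq 0$; yet $|2|=|-2|$, so $A$ is a screw and not strongly loxodromic, although it satisfies condition (2). (Incidentally, the complex homothety case you list is already excluded by $F\neq 0$, since it has a repeated eigenvalue; only the screw case is at stake, and it genuinely survives conditions (2).) This is not an isolated accident: the trace pairs of screw elements sweep out a real three-dimensional subset of ${\mathbb C}^2\cong{\mathbb R}^4$, which cannot lie inside the union of the complex curve $\{F=0\}$ with the totally real plane $\{{\rm tr}(A^{-1})=\overline{{\rm tr}(A)}\}$. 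So only the ``only if'' direction and case (1) hold as stated; the ``if'' direction fails in case (2), and invoking Theorem~7.3 of \cite{Nav} for that step can only work if the theorem there is stated with different hypotheses or conclusion than the proposition as quoted here. You should check the precise statement in \cite{Nav} or \cite{C-N-S} (in particular how configurations with two eigenvalues of equal modulus are excluded) and note that the proposition, and hence your plan for part (2), needs to be corrected accordingly.
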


\subsection{Polynomial matrix relations}

The goal of this section is to extend the work of Fricke and Vogt, see Theorem A of Goldman \cite{Gol1}, to two generator 
subgroups of ${\rm SL}(3,\mathbb{C})$ following the work of Lawton \cite{Law}, see also Will \cite{Will2} and Parker \cite{Par}. 

We define two polynomials in eight variables:
\begin{eqnarray}
\label{eq:S-x} S_0({\bf x}) & = & x_1x_5+x_2x_6+x_3x_7+x_4x_8 +x_1x_2x_5x_6\\
\nonumber		&& \quad -x_1x_2x_7-x_1x_4x_6-x_2x_5x_8-x_3x_5x_6-3, \\
\label{eq:P-x} P_0({\bf x}) 
& = & x_1^2x_2x_5^2x_6+x_1x_2^2x_5x_6^2 
+x_1^2x_2^2x_3+x_5^2x_6^2x_7+x_1^2x_6^2x_8+x_2^2x_4x_5^2 \\
\nonumber && \quad -x_1^2x_2x_5x_7-x_1x_3x_5^2x_6-x_1^2x_4x_5x_6-x_1x_2x_5^2x_8 \\
\nonumber && \quad -x_2^2x_5x_6x_8-x_1x_2x_4x_6^2-x_1x_2^2x_6x_7-x_2x_3x_5x_6^2 \\
\nonumber && \quad -x_1^3x_2x_6-x_2x_5^3x_6-x_1x_2^3x_5-x_1x_6^3x_5 \\
\nonumber && \quad -x_1x_2x_3x_4x_5-x_1x_5x_6x_7x_8-x_1x_2x_3x_6x_8-x_2x_4x_5x_6x_7 \\
\nonumber && \quad +x_1^2x_2x_8+x_4x_5^2x_6+x_1^2x_3x_6+x_2x_5^2x_7+x_1^2x_4x_7+x_3x_5^2x_8 \\
\nonumber && \quad +x_1x_2^2x_4+x_5x_6^2x_8+x_2^2x_3x_5+x_1x_6^2x_7+x_2^2x_7x_8+x_3x_4x_6^2 \\
\nonumber && \quad +x_3^2x_4x_5+x_1x_7^2x_8+x_3^2x_6x_8+x_2x_4x_7^2 \\
\nonumber && \quad +x_1x_3x_4^2+x_5x_7x_8^2+x_4^2x_6x_7+x_2x_3x_8^2 \\
\nonumber & & \quad -2x_1x_2x_3^2-2x_5x_6x_7^2-2x_2x_4^2x_5-2x_1x_6x_8^2\\
\nonumber && \quad +x_1x_2x_5x_6+x_1x_3x_5x_7+x_1x_4x_5x_8 \\
\nonumber && \quad +x_2x_3x_6x_7+x_2x_4x_6x_8+x_3x_4x_7x_8 \\ 
\nonumber && \quad +x_1^3+x_2^3+x_3^3+x_4^3+x_5^3+x_6^3+x_7^3+x_8^3 \\
\nonumber && \quad -3x_1x_3x_8-3x_4x_5x_7-3x_2x_3x_4-3x_6x_7x_8 \\
\nonumber && \quad +3x_1x_4x_6+3x_2x_5x_8+3x_1x_2x_7+3x_3x_5x_6 \\
\nonumber && \quad-6x_1x_5-6x_2x_6-6x_3x_7-6x_4x_8+9.
\end{eqnarray}

\begin{theorem}[Lawton \cite{Law}]\label{thm-lawton}
Let ${\bf x}=(x_1,\,\ldots,\, x_8)$ be any vector in ${\mathbb C}^8$. Then:
\begin{enumerate}
\item[(1)] 
There exist $A,\,B\in{\rm SL}(3,{\mathbb C})$ so that
\begin{equation}\label{eq:8-traces}
	\begin{array}{llll} x_1={\rm tr}(A),& x_2={\rm tr}(B),& x_3={\rm tr}(AB),& x_4={\rm tr}(A^{-1}B), \\
	x_5={\rm tr}(A^{-1}), & x_6={\rm tr}(B^{-1}), & x_7={\rm tr}(B^{-1}A^{-1}), & x_8={\rm tr}(B^{-1}A).
    \end{array}
\end{equation}
\item[(2)] If $A$ and $B$ are as in part (1) then 
$$
{\rm tr}[A,B]+{\rm tr}[B,A]=S_0({\bf x}),\quad {\rm tr}[A,B]{\rm tr}[B,A]=P_0({\bf x})
$$
where $S_0$ and $P_0$ are the polynomials defined by \eqref{eq:S-x} and \eqref{eq:P-x} evaluated
at the point ${\bf x}$ given by \eqref{eq:8-traces}. 

In particular, ${\rm tr}[A,B]$ and ${\rm tr}[B,A]$ are the roots of the polynomial
	\begin{equation}\label{eq:Q-x}
		Q_0(X)=X^2-S_0({\bf x})X+P_0({\bf x})
	\end{equation}
whose coefficients only depend on the eight traces in \eqref{eq:8-traces}
\item[(3)] Let $A,B\in {\rm SL}(3,\mathbb{C})$ be as in part (1). If the group $\langle A,B\rangle$ is irreducible, 
then it is determined up to conjugation within ${\rm SL}(3,\mathbb{C})$ by
	$$
	\begin{array}{lllll} {\rm tr}(A),& {\rm tr}(B),& {\rm tr}(AB),& {\rm tr}(A^{-1}B), & {\rm tr}[A,B], \\
	{\rm tr}(A^{-1}), & {\rm tr}(B^{-1}), & {\rm tr}(B^{-1}A^{-1}), & {\rm tr}(B^{-1}A).
    \end{array}
    $$
In other words, if $\langle A,B\rangle$ is irreducible then it is determined by the point ${\bf x}\in{\mathbb C}^8$
from \eqref{eq:8-traces} together with a choice of root of the quadratic polynomial \eqref{eq:Q-x}.
\end{enumerate}
\end{theorem}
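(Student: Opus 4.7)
The plan is to handle the three parts in sequence, using the Cayley-Hamilton identity for ${\rm SL}(3,{\mathbb C})$ as the unifying engine. For any $A\in{\rm SL}(3,{\mathbb C})$,
$$A^{-1}=A^{2}-{\rm tr}(A)A+{\rm tr}(A^{-1})I,$$
and the same identity applied to $B$ allows any word in $A^{\pm 1},B^{\pm 1}$ to be reduced to a ${\mathbb C}$-linear combination of short positive words, with coefficients polynomial in basic traces. All the trace identities needed below will flow from this reduction.

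For part (1), I would place $A$ in companion-matrix form with characteristic polynomial $t^{3}-x_{1}t^{2}+x_{5}t-1$, so that ${\rm tr}(A)=x_{1}$ and ${\rm tr}(A^{-1})=x_{5}$ hold automatically. Writing $B$ as a generic matrix subject to $\det B=1$ leaves eight free entries, and the six remaining trace conditions for $x_{2},x_{3},x_{4},x_{6},x_{7},x_{8}$ are polynomial in those entries. The resulting map to ${\mathbb C}^{8}$ is dominant by a direct dimension count; its image is closed because the full nine-trace map embeds the affine character variety into ${\mathbb C}^{9}$ (invariant theory), and the projection dropping ${\rm tr}[A,B]$ is finite by the quadratic constraint of part (2). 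Closed plus dense yields surjectivity.

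For part (2), I would compute ${\rm tr}[A,B]={\rm tr}(ABA^{-1}B^{-1})$ by substituting the Cayley-Hamilton expressions for $A^{-1}$ and $B^{-1}$, expanding, and collecting terms to obtain an explicit polynomial in the nine basic traces, and similarly for ${\rm tr}[B,A]={\rm tr}(BAB^{-1}A^{-1})$. Since $[B,A]=[A,B]^{-1}$, the pair $\bigl({\rm tr}[A,B],\,{\rm tr}[B,A]\bigr)$ is exactly $\bigl({\rm tr}(M),\,{\rm tr}(M^{-1})\bigr)$ for $M=[A,B]$. Their sum and product are therefore symmetric under the involution that swaps the two values, and after reduction cannot involve either individually; they must be polynomials in $x_{1},\ldots,x_{8}$ alone. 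Identifying these symmetric functions as $S_{0}({\bf x})$ and $P_{0}({\bf x})$ is then a matter of carrying out the expansion and matching coefficients.

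For part (3), I would use irreducibility to select a normalising basis. If $A$ is non-derogatory there is a cyclic vector $v$ with $\{v,Av,A^{2}v\}$ a basis; conjugating so that this is the standard basis forces $A$ to be the companion matrix of its characteristic polynomial, hence determined by $x_{1}$ and $x_{5}$. The centralizer of $A$ in ${\rm SL}(3,{\mathbb C})$ is then the two-dimensional group ${\mathbb C}[A]\cap{\rm SL}(3,{\mathbb C})$, so after normalising this freedom $B$ has $8-2=6$ free parameters. The six remaining trace equations generically cut out finitely many solutions for $B$, and the value of ${\rm tr}[A,B]$ picks out one. If $A$ is derogatory, irreducibility instead yields a $v$ with $\{v,Av,Bv\}$ a basis and the analogous argument applies. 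The main obstacle throughout is the pure algebra of part (2): verifying the explicit formulas \eqref{eq:S-x} and \eqref{eq:P-x} requires a substantial symbolic expansion, and a fully conceptual treatment passes through Procesi's and Razmyslov's theorems on trace identities for $n\times n$ matrices, as used in Lawton's original argument \cite{Law}.
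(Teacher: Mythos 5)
First, note that the paper itself contains no proof of this statement: Theorem~\ref{thm-lawton} is quoted directly from Lawton \cite{Law}, so the only ``proof'' in the paper is the citation. Judged as a self-contained argument, your outline assembles the right ingredients (companion normal form, Cayley--Hamilton reduction, the involution swapping ${\rm tr}[A,B]$ and ${\rm tr}[B,A]$), but it is circular or gappy exactly where Lawton's real work lies. In part (2), the observation that the sum and product are symmetric under exchanging ${\rm tr}(M)$ and ${\rm tr}(M^{-1})$ for $M=[A,B]$ does not imply they are polynomials in $x_1,\dots,x_8$; that implication \emph{is} the theorem, and establishing it needs the fundamental trace identities for $3\times 3$ matrices (Procesi/Razmyslov) together with the lengthy reduction Lawton performs -- Cayley--Hamilton substitution alone leaves traces of longer words such as ${\rm tr}(A^2BAB^2)$ that you have not shown how to eliminate. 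In part (1), your closedness step assumes that the nine-trace map is a closed embedding of the character variety, i.e.\ that the nine traces generate the invariant ring; that is Lawton's main theorem, so as written the argument presupposes what is being proved, and ``dominant by a direct dimension count'' also needs an actual verification (e.g.\ a point with surjective differential, or an explicit solution of the six equations for $B$).

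In part (3), ``the six remaining trace equations generically cut out finitely many solutions for $B$, and the value of ${\rm tr}[A,B]$ picks out one'' is not a proof of the uniqueness statement, which must hold for \emph{every} irreducible pair, not generically: finiteness of the solution set does not show that distinct solutions are conjugate, nor that the ninth trace separates them, and the derogatory case is only waved at. The clean argument is the one underlying Lawton's proof: irreducible representations have closed ${\rm SL}(3,\mathbb{C})$-orbits with finite stabiliser, invariant functions separate closed orbits in the GIT quotient, and the generation theorem reduces ``all invariants'' to the nine listed traces. If you prefer your normalization route, you must actually solve the equations and verify uniqueness up to the residual centraliser action in all cases (regular and non-regular $A$), which is a genuine computation rather than a genericity remark. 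In short: citing Lawton, as the paper does, is legitimate; your sketch, taken as an independent proof, is not yet one.
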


Note that part (3) means that if $\langle A', B'\rangle$ is any representation (possibly reducible) so that the eight traces 
in \eqref{eq:8-traces} agree with those of $\langle A,B\rangle$ and that we choose the same root
of the quadratic \eqref{eq:Q-x} for both groups, then $\langle A',B'\rangle$ is irreducible and conjugate to $\langle A,B\rangle$.

If $\langle A,B\rangle$ is reducible, then $A$ and $B$ share an eigenvector. It is clear that this vector is an eigenvector
of the commutator $[A,B]$ with eigenvalue 1; see Lemma~\ref{lem-1-evalue} below. 
From this it follows that ${\rm tr}[A,B]={\rm tr}[B,A]$ and so
$\langle A,B\rangle$ is in the branching locus of the quadratic $Q_0$. That is $S_0^2-4P_0=0$. We will see later that
the converse is not true, namely there are irreducible groups in the branching locus, for example when $\langle A,B\rangle$
is in the irreducible representation of ${\rm SL}(2,{\mathbb C})$ to ${\rm SL}(3,{\mathbb C})$.

\medskip

Given $\langle A,B,C:CBA=I\rangle$, the coordinates of Fricke and Vogt for ${\rm SL}(2,{\mathbb R})$ representations 
of this group are symmetric in cyclic permutation of $A$, $B$ and $C$. This is not the case with Lawton's parameters for 
${\rm SL}(3,{\mathbb C})$ representations of the group. We now show how to symmetrise Lawton's parameters.

First we observe that 
$$
x_3={\rm tr}(AB)={\rm tr}(C^{-1}),\quad x_7={\rm tr}(B^{-1}A^{-1})={\rm tr}(C).
$$
Symmetrising $x_4={\rm tr}(A^{-1}B)$ and $x_8={\rm tr}(B^{-1}A)$ is slightly more difficult.

\begin{lemma}\label{lem:char-poly}
	Let $A\in {\rm SL}(3,\mathbb{C})$. Then the characteristic polynomial of $A$ is 
	\begin{equation}\label{eqn:1.5}
		\chi_A(x)= x^3-{\rm tr}(A)x^2+{\rm tr}(A^{-1})x-1
	\end{equation}
\end{lemma}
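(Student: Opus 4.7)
The plan is to identify $\chi_A(x)$ with the general formula for the characteristic polynomial of a $3\times 3$ matrix in terms of the elementary symmetric polynomials of its eigenvalues, and then use $\det(A)=1$ to rewrite the middle coefficient as ${\rm tr}(A^{-1})$.

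First I would recall that for any $A\in{\rm SL}(3,\mathbb{C})$ with eigenvalues $\lambda_1,\lambda_2,\lambda_3$ (listed with multiplicity), the characteristic polynomial factors as
$$
\chi_A(x)=(x-\lambda_1)(x-\lambda_2)(x-\lambda_3)=x^3-e_1 x^2+e_2 x-e_3,
$$
where $e_1,e_2,e_3$ are the elementary symmetric polynomials in the $\lambda_i$. By definition $e_1=\lambda_1+\lambda_2+\lambda_3={\rm tr}(A)$ and $e_3=\lambda_1\lambda_2\lambda_3=\det(A)=1$ because $A\in{\rm SL}(3,\mathbb{C})$. This immediately pins down the $x^3$, $x^2$ and constant terms of the claimed formula.

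The only remaining step is to identify $e_2$ with ${\rm tr}(A^{-1})$. For this I would use the fact that $A^{-1}$ has eigenvalues $\lambda_1^{-1},\lambda_2^{-1},\lambda_3^{-1}$, so that
$$
{\rm tr}(A^{-1})=\lambda_1^{-1}+\lambda_2^{-1}+\lambda_3^{-1}=\frac{\lambda_2\lambda_3+\lambda_1\lambda_3+\lambda_1\lambda_2}{\lambda_1\lambda_2\lambda_3}=\frac{e_2}{e_3}=e_2,
$$
where the last equality again uses $\det(A)=1$. Substituting into the general formula gives $\chi_A(x)=x^3-{\rm tr}(A)x^2+{\rm tr}(A^{-1})x-1$, as desired.

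There is essentially no obstacle: the argument is a direct unpacking of definitions together with the determinant normalisation. A small cosmetic point is that one should say what to do if $A$ is not diagonalisable, but since both sides of the identity are polynomial (indeed regular) functions of the entries of $A$, and diagonalisable matrices are dense in ${\rm SL}(3,\mathbb{C})$, the identity extends by continuity; alternatively one can just work with the Jordan form, where the eigenvalues on the diagonal still yield the same elementary symmetric polynomials.
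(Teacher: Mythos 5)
Your argument is correct and follows essentially the same route as the paper: express the coefficients of $\chi_A$ as the elementary symmetric polynomials of the eigenvalues, use $\det(A)=1$ for the constant term, and note that $\lambda_2\lambda_3+\lambda_1\lambda_3+\lambda_1\lambda_2=\lambda_1^{-1}+\lambda_2^{-1}+\lambda_3^{-1}={\rm tr}(A^{-1})$. The closing remark about diagonalisability is harmless but unnecessary, since the coefficients of the characteristic polynomial are always the elementary symmetric functions of the eigenvalues counted with multiplicity.
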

\begin{proof}
	Let $\lambda_{1}$, $\lambda_{2}$, $\lambda_{3}$ be the eigenvalues of $A$. 
	Then $\lambda_{1}\lambda_{2}\lambda_{3}={\rm det}(A)=1$ which is the constant term of the characteristic polynomial. 
	We know that the quadratic term is $\lambda_{1}+\lambda_{2}+\lambda_{3}={\rm tr}(A)$. 
	Using $\lambda_{1}\lambda_{2}\lambda_{3}=1$  the linear term is
	$$
	\lambda_{2}\lambda_3+\lambda_{1}\lambda_{3}+\lambda_{1}\lambda_{2}
	=\lambda_1^{-1}+\lambda_{2}^{-1}+\lambda_{3}^{-1}.
	$$
	Since $\lambda_{1}^{-1}$, $\lambda_{2}^{-1}$, $\lambda_{3}^{-1}$ are the eigenvalues of $A^{-1}$, we see that
	the linear term is ${\rm tr}(A^{-1})$, as claimed.
	\end{proof}

The lemma below was proved in \cite{Par} for ${\rm SU}(2,1)$, but in fact is valid for ${\rm SL}(3,{\mathbb C})$.

\begin{lemma}\label{Lem2.5}
	Let $A,\,B,\,C\in {\rm SL}(3,\mathbb{C})$ with $CBA=I$, then
	\begin{eqnarray}
		{\rm tr}(A^{-1}B)-{\rm tr}(A^{-1}){\rm tr}(B) 
		& = & {\rm tr}(C^{-1}A)-{\rm tr}(C^{-1}){\rm tr}(A) \label{eq:shape1}\\
		& = & {\rm tr}(B^{-1}C)-{\rm tr}(B^{-1}){\rm tr}(C), \nonumber \\
				{\rm tr}(AB^{-1})-{\rm tr}(B^{-1}){\rm tr}(A) 
		& = & {\rm tr}(CA^{-1})-{\rm tr}(A^{-1}){\rm tr}(C) \label{eq:shape2}\\
		& = & {\rm tr}(BC^{-1})-{\rm tr}(C^{-1}){\rm tr}(B). \nonumber
	\end{eqnarray}
\end{lemma}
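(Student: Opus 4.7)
The plan is to reduce both identities to a single application of the Cayley--Hamilton theorem combined with the cyclic symmetry of the relation $CBA=I$. By Lemma~\ref{lem:char-poly}, the Cayley--Hamilton identity for $A$ reads
$$A^2={\rm tr}(A)\,A-{\rm tr}(A^{-1})\,I+A^{-1}.$$

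For the first equality of \eqref{eq:shape1}, I would use the consequence $C^{-1}=BA$ of $CBA=I$ to rewrite $C^{-1}A=BA^2$. Multiplying the Cayley--Hamilton relation on the left by $B$ and taking traces gives
$${\rm tr}(BA^2)={\rm tr}(A)\,{\rm tr}(BA)-{\rm tr}(A^{-1})\,{\rm tr}(B)+{\rm tr}(BA^{-1}).$$
Translating ${\rm tr}(BA^2)={\rm tr}(C^{-1}A)$ and ${\rm tr}(BA)={\rm tr}(C^{-1})$ and then rearranging yields the desired identity ${\rm tr}(A^{-1}B)-{\rm tr}(A^{-1})\,{\rm tr}(B)={\rm tr}(C^{-1}A)-{\rm tr}(C^{-1})\,{\rm tr}(A)$.

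The remaining equality in \eqref{eq:shape1} and the analogous equalities of \eqref{eq:shape2} follow by the same device. The defining relation $CBA=I$ is invariant under the cyclic permutation $(A,B,C)\mapsto(B,C,A)$, since this transforms it into the equivalent $ACB=I$; applying the equality just proved under this permutation connects the first and third members of \eqref{eq:shape1}. For \eqref{eq:shape2}, the same argument goes through with $B^{-1}=AC$ in place of $C^{-1}=BA$: one rewrites $AB^{-1}=A^2C$, applies Cayley--Hamilton to $A$ multiplied on the right by $C$, takes traces, and finishes the chain of three equalities by a further cyclic permutation.

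The main obstacle is purely bookkeeping: at each step one must choose the correct rearrangement among $A^{-1}=CB$, $B^{-1}=AC$, $C^{-1}=BA$ so that the quadratic term produced by Cayley--Hamilton matches the trace expression on the opposite side of the target identity. Once these substitutions are lined up there are no conceptual difficulties.
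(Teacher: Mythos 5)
Your proposal is correct and is essentially the paper's own argument: both apply the Cayley--Hamilton identity $A^{3}-{\rm tr}(A)A^{2}+{\rm tr}(A^{-1})A-I=O$ (yours in the equivalent form $A^{2}={\rm tr}(A)A-{\rm tr}(A^{-1})I+A^{-1}$), multiply by $B$ on the left (respectively by $C$ on the right, i.e.\ the paper's $A^{-1}C$), substitute $C^{-1}=BA$ or $B^{-1}=AC$, take traces, and finish by cyclic permutation of $A,B,C$. No gaps; the bookkeeping you describe matches the paper's computation exactly.
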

\begin{proof}
From Lemma~\ref{lem:char-poly} and the Cayley-Hamilton theorem we have 
\begin{equation}\label{eqn:1.6}
	A^3-{\rm tr}(A)A^2+{\rm tr}(A^{-1})A-I=O
\end{equation}
	We multiply equation \eqref{eqn:1.6} on the left by $BA^{-1}$ to get
	\begin{equation*}
		BA^2-{\rm tr}(A)BA+{\rm tr}(A^{-1})B-BA^{-1}=O.
	\end{equation*}
Since $CBA=I$ then $C^{-1}=BA$ and we substitute
\begin{equation*}
	C^{-1}A-{\rm tr}(A)C^{-1}+{\rm tr}(A^{-1})B-BA^{-1}=O
\end{equation*}
and taking traces we get
\begin{equation*}
	{\rm tr}(C^{-1}A)-{\rm tr}(A){\rm tr}(C^{-1})+{\rm tr}(A^{-1}){\rm tr}(B)-{\rm tr}(BA^{-1})=0.
\end{equation*}
Rearranging gives
$$
{\rm tr}(C^{-1}A)-{\rm tr}(C^{-1}){\rm tr}(A)={\rm tr}(A^{-1}B)-{\rm tr}(A^{-1}){\rm tr}(B).
$$
Cyclically permuting $A$, $B$ and $C$ gives
$$
{\rm tr}(A^{-1}B)-{\rm tr}(A^{-1}){\rm tr}(B)={\rm tr}(B^{-1}C)-{\rm tr}(B^{-1}){\rm tr}(C). 
$$
This gives \eqref{eq:shape1}

Starting from \eqref{eqn:1.6} and multiplying on the right by $A^{-1}C$, a similar argument gives
\eqref{eq:shape2}.
\end{proof}

\medskip

\begin{definition}\label{def-shape-invariants}
Define the {\em shape invariants} of the triple $A,\,B,\,C$, or of the group 
$\Gamma=\langle A,\,B,\,C\,:\, CBA=I\rangle$ they generate, as
\begin{eqnarray}
\sigma_+=\sigma_+(A,B,C)=\sigma_+(\Gamma) &:= & {\rm tr}(A^{-1}B)-{\rm tr}(A^{-1}){\rm tr}(B), \label{eq:shape+} \\
\sigma_-=\sigma_-(A,B,C)=\sigma_-(\Gamma) &:= &{\rm tr}(B^{-1}A)-{\rm tr}(B^{-1}){\rm tr}(A). \label{eq:shape-}
\end{eqnarray}	
\end{definition}
	
From Lemma~\ref{Lem2.5}, we see that
the shape invariants are invariant under cyclic permutation
of $A$, $B$ and $C$.

We also remark that 
$$
[A,B]=ABA^{-1}B^{-1}=ABC
$$ 
and 
$$
[B,A]=BAB^{-1}A^{-1}=C^{-1}B^{-1}A^{-1}=(ABC)^{-1}.
$$ 
It is easy to see that this implies 
$$
{\rm tr}[A,B]={\rm tr}[B,C]={\rm tr}[C,A],\quad {\rm tr}[B,A]={\rm tr}[C,B]={\rm tr}[A,C].
$$
This implies that equation \eqref{eq:Q-x} is invariant under cyclic permutation of $A$, $B$, $C$ and so this must be true
of the polynomials $S_0({\bf x})$ and $P_0({\bf x})$. Following Proposition~4.10 of \cite{Par}, the easiest way to see this is to use 
the variables
${\bf y}=(y_1,\,\ldots,\, y_8)$ where
\begin{equation}
	\begin{array}{llll} 
	y_1={\rm tr}(A),& y_2={\rm tr}(B),& y_3={\rm tr}(C),& y_4=\sigma_+(A,B,C), \\ 
	y_5={\rm tr}(A^{-1}), & y_6={\rm tr}(B^{-1}), & y_7={\rm tr}(C^{-1}), & y_8=\sigma_-(A,B,C).
    \end{array}
\end{equation}
In particular, $x_3=y_7$ and 
$$
x_4={\rm tr}(A^{-1}B)=\sigma_+(A,B,C)+{\rm tr}(A^{-1}){\rm tr}(B)=y_4+y_2y_5.
$$
Using this substitution, we define
\begin{eqnarray*}
S(y_1,\,\ldots,\,y_8) & = & S_0\bigl(y_1,y_2,y_7,(y_4+y_2y_5),y_6,y_7,y_3,(y_8+y_1y_6)\bigr), \\
P(y_1,\,\ldots,\,y_8) & = & P_0\bigl(y_1,y_2,y_7,(y_4+y_2y_5),y_6,y_7,y_3,(y_8+y_1y_6)\bigr).
\end{eqnarray*}
Specifically, peforming the substitution we obtain:
\begin{eqnarray}
\label{eq:S-y} S({\bf y}) & = & y_1y_5+y_2y_6+y_3y_7+y_4y_8 -y_1y_2y_3-y_5y_6y_7-3, \\
\label{eq:P-y}P({\bf y}) & = & y_1y_2y_3y_5y_6y_7 \\
\nonumber && \ 
+ y_1^2y_2^2y_7 + y_3y_5^2y_6^2
+y_1^2y_3^2y_6  + y_2y_5^2y_7^2
+y_2^2y_3^2y_5  + y_1y_6^2y_7^2 \\
\nonumber && \ 
+y_1y_2y_5y_6+y_2y_3y_6y_7+y_1y_3y_5y_7 \\
\nonumber && \ 
 -2y_1y_2y_7^2 - 2y_3^2y_5y_6
 -2y_1y_3y_6^2 - 2y_2^2y_5y_7
 -2 y_2y_3y_5^2 - 2 y_1^2y_6y_7 \\
 \nonumber && \ 
 +y_1^3+y_2^3+y_3^3+y_5^3+y_6^3+y_7^3 \\
 \nonumber && \ 
 + 3y_1y_2y_3+3y_5y_6y_7-6y_1y_5 -6y_2y_6 -6y_3y_7 \\
 \nonumber && \ 
 +y_1y_2y_4y_5y_7 + y_1y_3y_4y_6y_7 + y_2y_3y_4y_5y_6 \\
 \nonumber && \ 
 +y_1y_2^2y_4 + y_4y_5^2y_6 + y_1^2y_3y_4 + y_4y_5y_7^2+y_2y_3^2y_4+y_4y_6^2y_7 \\
 \nonumber &&  \ 
 +y_1y_3y_5y_6y_8 + y_2y_3y_5y_7y_8 + y_1y_2y_6y_7y_8 \\
 \nonumber && \ 
 +y_5y_6^2y_8 + y_1^2y_2y_8 + y_3y_5^2y_6+y_1y_3^2y_8 + y_6y_7^2y_8+y_2^2y_3y_8 \\
 \nonumber && \ 
 +(y_4^2-3y_8)(y_1y_7+y_2y_5+y_3y_6) \\
 \nonumber && \ 
 +(y_8^2-3y_4)(y_1y_6+y_2y_7+y_3y_5) \\
 \nonumber && \ 
 +y_4y_8(y_1y_5+y_2y_6+y_3y_7-6)+y_4^3+y_8^3+9.
\end{eqnarray}
It is easy to see that cyclic permutation of $A$, $B$ and $C$ gives a permutation of $(y_1,\,\ldots,\,y_8)$
that preserves $S({\bf y})$ and $P({\bf y})$. 
Therefore, we can rewrite Lawton's theorem as follows, which generalises the theorem of Fricke and Vogt to our case:

\begin{theorem}\label{thm-lawton-new}
Let ${\bf y}=(y_1,\,\ldots,\, y_8)$ be any vector in ${\mathbb C}^8$. Then:
\begin{enumerate}
\item[(1)] 
There exist $A,\,B,\,C\in{\rm SL}(3,{\mathbb C})$ with $CBA=I$ so that
\begin{equation}\label{eq:8-traces-y}
	\begin{array}{llll} 
	y_1={\rm tr}(A),& y_2={\rm tr}(B),& y_3={\rm tr}(C),& y_4=\sigma_+(A,B,C), \\
	y_5={\rm tr}(A^{-1}), & y_6={\rm tr}(B^{-1}), & y_7={\rm tr}(C^{-1}), & y_8=\sigma_-(A,B,C),
    \end{array}
\end{equation}
where $\sigma_+$ and $\sigma_-$ are given by \eqref{eq:shape+} and \eqref{eq:shape-}.
\item[(2)] If $A,\,B,\,C$ are as in part (1) then 
$$
{\rm tr}[A,B]+{\rm tr}[B,A]=S({\bf y}),\quad {\rm tr}[A,B]{\rm tr}[B,A]=P({\bf y})
$$
where $S$ and $P$ are the polynomials defined by \eqref{eq:S-y} and \eqref{eq:P-y} evaluated
at the point ${\bf y}$ given by \eqref{eq:8-traces-y}. 

In particular, ${\rm tr}[A,B]$ and ${\rm tr}[B,A]$ are the roots of the polynomial
	\begin{equation}\label{eqn:Q-y}
		Q(X)=X^2-S({\bf y})X+P({\bf y})
	\end{equation}
whose coefficients only depend on the traces and shape invariants in \eqref{eq:8-traces-y}.
\item[(3)] Let $A,B,C \in {\rm SL}(3,\mathbb{C})$ with $CBA=I$ be as in part (1).
If the group generated by $A$, $B$ and $C$ is irreducible, 
then it is determined up to conjugation within ${\rm SL}(3,\mathbb{C})$ by
	$$
	\begin{array}{lllll} {\rm tr}(A),& {\rm tr}(B),& {\rm tr}(C),& \sigma_+(\Gamma), & {\rm tr}[A,B], \\
	{\rm tr}(A^{-1}), & {\rm tr}(B^{-1}), & {\rm tr}(C^{-1}), & \sigma_-(\Gamma).
    \end{array}
    $$
In other words, if $\langle A,B\rangle$ is irreducible then it is determined by the eight traces
from \eqref{eq:8-traces-y} together with a choice of root of the quadratic polynomial 
$Q$, from \eqref{eqn:Q-y}.
\end{enumerate}
\end{theorem}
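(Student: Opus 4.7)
The plan is to deduce Theorem~\ref{thm-lawton-new} directly from Lawton's theorem (Theorem~\ref{thm-lawton}) via the invertible polynomial change of coordinates
$$
x_1=y_1,\ x_2=y_2,\ x_3=y_7,\ x_4=y_4+y_2y_5,\ x_5=y_5,\ x_6=y_6,\ x_7=y_3,\ x_8=y_8+y_1y_6.
$$
This substitution is forced by the identities $\mathrm{tr}(C)=\mathrm{tr}(B^{-1}A^{-1})$ and $\mathrm{tr}(C^{-1})=\mathrm{tr}(AB)$ arising from $CBA=I$, together with the definitions \eqref{eq:shape+}, \eqref{eq:shape-} of the shape invariants. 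The polynomials $S({\bf y})$ and $P({\bf y})$ in \eqref{eq:S-y} and \eqref{eq:P-y} are, by construction, the images of $S_0({\bf x})$ and $P_0({\bf x})$ under this substitution.

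For part (1), given ${\bf y}\in\mathbb{C}^8$ I form the corresponding ${\bf x}\in\mathbb{C}^8$ as above and apply Theorem~\ref{thm-lawton}(1) to obtain $A,B\in\mathrm{SL}(3,\mathbb{C})$ realising the eight traces in \eqref{eq:8-traces}. Setting $C=B^{-1}A^{-1}$ yields $CBA=I$, and direct computation of the traces of $A$, $B$, $C$, $A^{-1}$, $B^{-1}$, $C^{-1}$ and of $\sigma_\pm(A,B,C)$ recovers the vector ${\bf y}$; the only non-trivial cancellations are $\sigma_+=x_4-x_2x_5=y_4$ and $\sigma_-=x_8-x_1x_6=y_8$. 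For part (2), since $S({\bf y})=S_0({\bf x})$ and $P({\bf y})=P_0({\bf x})$ by construction, Theorem~\ref{thm-lawton}(2) immediately gives the sum, product, and quadratic \eqref{eqn:Q-y} for $\mathrm{tr}[A,B]$ and $\mathrm{tr}[B,A]$. For part (3), the coordinate change ${\bf x}\leftrightarrow{\bf y}$ is a bijection on $\mathbb{C}^8$, so the data in \eqref{eq:8-traces-y} together with a chosen root of $Q$ encodes precisely the same information as the data in \eqref{eq:8-traces} together with the corresponding root of $Q_0$. Moreover, irreducibility of $\langle A,B,C\rangle$ is equivalent to irreducibility of $\langle A,B\rangle$ when $C=B^{-1}A^{-1}$, so Theorem~\ref{thm-lawton}(3) yields the desired conjugacy classification.

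The main obstacle is purely computational: one must verify that performing the literal substitutions $x_3\mapsto y_7$, $x_4\mapsto y_4+y_2y_5$, $x_7\mapsto y_3$, $x_8\mapsto y_8+y_1y_6$ in the eight-variable polynomials $S_0$ and $P_0$ of \eqref{eq:S-x} and \eqref{eq:P-x} produces exactly the expressions \eqref{eq:S-y} and \eqref{eq:P-y}. For $S_0$ this is short; for $P_0$ it is a long but routine expansion in which many cross-terms collapse through the identities built into the shape invariant. This calculation would be performed once (and is easily checked symbolically) and then invoked here. No further obstacle arises, because all of the genuinely geometric content — existence, parametrisation, and the quadratic ambiguity — is already contained in Theorem~\ref{thm-lawton}.
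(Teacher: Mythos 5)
Your proposal is correct and follows essentially the same route as the paper: the paper itself obtains Theorem~\ref{thm-lawton-new} precisely by the substitution $x_3=y_7$, $x_7=y_3$, $x_4=y_4+y_2y_5$, $x_8=y_8+y_1y_6$ into Lawton's Theorem~\ref{thm-lawton}, with $S({\bf y})$, $P({\bf y})$ defined as the images of $S_0$, $P_0$ under that substitution. The only slip is that to have $CBA=I$ you must set $C=A^{-1}B^{-1}$ rather than $B^{-1}A^{-1}$; since ${\rm tr}(A^{-1}B^{-1})={\rm tr}(B^{-1}A^{-1})$ and $\langle A,B,C\rangle=\langle A,B\rangle$ in either case, this does not affect the rest of the argument.
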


\subsection{Twist-bend-buldge-turn parameter.}

Once again, we use the free product with amalgamation of $\Gamma=\rho\bigl(\pi_1(Y)\bigr)$ and 
$\Gamma'=\rho\bigl(\pi_1(Y')\bigr)$ along $A'=A^{-1}$ and we use the HNN extension to glue 
$\Gamma=\rho\bigl(\pi_1(Y)\bigr)$ along two conjugate peripheral curves $A$ and $B=DA^{-1}D^{-1}$ to obtain
\begin{eqnarray*}
\Gamma *_{\langle A\rangle} \Gamma' & = & \langle B,C,B',C'\,:\, CBC'B'=I\rangle, \\
\Gamma *_{\langle D\rangle} & = & \langle A,C,D\,:\,C[D,A^{-1}]=I\rangle.
\end{eqnarray*}

As in Section~\ref{sec:alg-FN}, there are further parameters that capture the freedom we have when 
taking the free product with amalgamation and the HNN extension. Namely, in each case we take $K\in Z(A)$, the
centraliser of $A$. Given such a $K$ we obtain
\begin{eqnarray*}
\Gamma *_{\langle A\rangle} (K\Gamma'K^{-1}) & = & \langle B,C,KB'K^{-1},KC'K^{-1}\,:\, CB(KC'K^{-1})(KB'K^{-1})=I\rangle, \\
\Gamma *_{\langle DK\rangle} & = & \langle A,C,DK\,:\,C[DK,A^{-1}]=I\rangle.
\end{eqnarray*}
We now explain how to parameterise $Z(A)$.
Since we assumed that $A$ is strongly loxodromic, it has three distinct eigenvalues and hence has three
(complex) one dimensional eigenspaces. Thus, its centraliser $Z(A)$ consists of all elements of ${\rm SL}(3,{\mathbb C})$ 
preserving each of these eigenspaces. This space has two complex dimensions and we parametrise using complex
twist-bend and bulge-turn parameters.

It is easiest to define the twist-bend and bulge turn parameters when $A$ is diagonal; see Goldman \cite{Gol}.
Suppose the strongly loxodromic map $A$ has eigenvalues $\lambda_1,\,\lambda_2,\,\lambda_3$ such that 
$|\lambda_1|> |\lambda_2|> |\lambda_3|$.  Let ${\bf v}_+(A)$, ${\bf v}_0(A)$ and ${\bf v}_-(A)$ be eigenvectors
associated to $\lambda_1$, $\lambda_2$, $\lambda_3$ respectively. 

Conjugating if necessary, assume that ${\bf v}_+(A),\,{\bf v}_0(A),\,{\bf v}_-(A)$ are the standard basis vectors, and so
$A$ is a diagonal matrix
$$
	A=\left(\begin{matrix} \lambda_1 & 0 & 0 \\ 0 & \lambda_2 & 0 \\ 0 & 0 & \lambda_3		
	\end{matrix}\right).
$$
Clearly the centraliser $Z(A)$ of $A$ is the set of all diagonal matrices in $SL(3,\mathbb{C})$. This is the direct product of the 
two one-parameters subgroups
\begin{equation}
	T^u=\left(\begin{array}{ccc}
		e^{u} & 0 & 0 \\
		0 & 1 & 0 \\
		0 & 0 & e^{-u}
	\end{array}\right), \quad 
	U^v=\left(\begin{array}{ccc}
		e^{-v} & 0 & 0 \\
		0 & e^{2v} & 0 \\
		0 & 0 & e^{-v}
	\end{array}\right), \label{eq:tbbt}
\end{equation}
where $u,\,v\in\mathbb{C}$. Write $K\in Z(A)$ as 
$$
K=\left(\begin{matrix} \kappa_1 & 0 & 0 \\ 0 & \kappa_2 & 0 \\ 0 & 0 & \kappa_3 \end{matrix}\right).
$$
If $K=T^uU^v$ then 
$$
\kappa_1=e^{u-v},\quad \kappa_2=e^{2v},\quad \kappa_3=e^{-u-v}.
$$
That is, we can define $u$ and $v$ in a conjugation invariant way as follows. 
We will include the dependence on $\Gamma$ and $\alpha$ which we use later.

\begin{definition}\label{def-t-b-b-t}
First, the {\em bulge-turn} parameter 
$v=(s+i\phi)_\Gamma(\alpha)$ is defined by 
$$
\kappa_2=e^{2v}=e^{2(s+i\phi)_\Gamma(\alpha)},
$$ 
which is the eigenvalue of $K$ corresponding to the eigenvector ${\bf v}_0(A)$. 
In order to make $v$ well defined, we suppose $\phi_\Gamma(\alpha)\in{\mathbb R}/\pi{\mathbb Z}$. Next, we define
the {\em twist-bend} $u=(t+i\theta)_\Gamma(\alpha)$ by 
$$
\kappa_1=e^{u-v}=e^{(t+i\theta)_\Gamma(\alpha)-(s+i\phi)_\Gamma(\alpha)}
$$
which is the eigenvalue of $K$ corresponding to the eigenvector ${\bf v}_+(A)$. In order to make $u$ well defined, 
we suppose $\theta_\Gamma(\alpha)\in{\mathbb R}/2\pi{\mathbb Z}$. 
\end{definition}

For clarity, we have
\begin{enumerate}
\item[(i)] the {\em twist} along $\alpha$ is $t_\Gamma(\alpha)={\rm Re}(u)\in{\mathbb R}$, 
\item[(ii)] the {\em bend} along $\alpha$ is $\theta_\Gamma(\alpha)={\rm Im}(u)\in {\mathbb R}/2\pi{\mathbb R}$,
\item[(iii)] the {\em bulge} along $\alpha$ is $s_\Gamma(\alpha)={\rm Re}(v)\in{\mathbb R}$,
\item[(iv)] the {\em turn} along $\alpha$ is $\phi_\Gamma(\alpha)={\rm Im}(v)\in{\mathbb R}/\pi{\mathbb R}$.
\end{enumerate}

Using the decomposition of $S_g$ along ${\mathcal L}=\{[\gamma_1],\,\ldots,\,[\gamma_{3g-3}]\}$ into three holed spheres
$\{Y_1,\,\ldots,\,Y_{2g-2}\}$ these two operations allow us to construct a representation of $\pi_1(S_g)$ to
${\rm SL}(3,{\mathbb C})$. This yields the following parameters:
\begin{enumerate}
\item[(1)] $6g-6$ complex trace parameters arising from the curves $\gamma_1,\,\ldots,\,\gamma_{3g-3}$, namely 
${\rm tr}(A_1),\,\ldots,\,{\rm tr}(A_{3g-3})$ and 
${\rm tr}(A_1^{-1}),\,\ldots,\,{\rm tr}(A_{3g-3}^{-1})$, where $A_j=\rho\bigl([\gamma_j]\bigr)$, 
\item[(2)] $4g-4$ complex shape parameters arising from the pairs of pants $Y_1,\,\ldots,\, Y_{2g-2}$, namely
$\sigma_+(Y_1),\,\ldots,\,\sigma_+(Y_{2g-2})$ and 
$\sigma_-(Y_1),\,\ldots,\,\sigma_-(Y_{2g-2})$, 
\item[(3)] choices of a root of the for each of of the $2g-2$ polynomials $Q(Y_1),\,\ldots,\,Q(Y_{2g-2})$, 
\item[(4)] $3g-3$ complex twist-bend parameters 
$(t+i\theta)_\Gamma(\gamma_1),\,\ldots,\,(t+i\theta)_\Gamma(\gamma_{3g-3})$ 
and $3g-3$ complex bulge-turn parameters 
$(s+i\phi)_\Gamma(\gamma_1),\,\ldots,\,(s+i\phi)_\Gamma(\gamma_{3g-3})$.
\end{enumerate}

This proves Theorem~\ref{thm:main-SL(3,C)}.

\section{${\rm SL}(2,\mathbb{K})$ coordinates}\label{sec:SL(2,K)}

In this section, we consider representations of $\pi_1(S_g)$ to ${\rm SL}(3,{\mathbb C})$ that factor through 
${\rm SL}(2,{\mathbb K})$ where ${\mathbb K}$ is either
${\mathbb R}$ or ${\mathbb C}$. Most of the construction works in both cases and so it is convenient to cover them
together. We will highlight the places where there is a difference. We are most interested in the case where
the inclusion of ${\rm SL}(2,{\mathbb K})$ in ${\rm SL}(3,{\mathbb C})$ is via the irreducible representation. 
It will be useful to also briefly consider a particular type of reducible representation. 

\subsection{Representations where ${\rm tr}(A)={\rm tr}(A^{-1})$}

It is well known that if $A\in{\rm SL}(2,{\mathbb K})$ then ${\rm tr}(A^{-1})={\rm tr}(A)$. This will also be true
of the images of ${\rm SL}(2,{\mathbb K})$ in ${\rm SL}(3,{\mathbb C})$ we consider. The following lemma
is a simple consequence of this fact.

\begin{lemma}\label{lem-1-evalue}
Suppose that $A$ is an element of ${\rm SL}(3,{\mathbb C})$  for which ${\rm tr}(A^{-1})={\rm tr}(A)$. Then
$A$ has $1$ as an eigenvalue.
\end{lemma}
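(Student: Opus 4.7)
The plan is to apply the characteristic polynomial formula from Lemma~\ref{lem:char-poly} directly. By that lemma, any $A \in \mathrm{SL}(3,\mathbb{C})$ has characteristic polynomial
$$
\chi_A(x) = x^3 - {\rm tr}(A)\, x^2 + {\rm tr}(A^{-1})\, x - 1.
$$
Under the hypothesis ${\rm tr}(A^{-1}) = {\rm tr}(A)$, the coefficients of $x^2$ and of $x$ become negatives of each other, and the constant term cancels against the leading term. Concretely, writing $t = {\rm tr}(A) = {\rm tr}(A^{-1})$, one gets $\chi_A(x) = x^3 - t x^2 + t x - 1$.

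Next I would simply evaluate at $x = 1$ to obtain $\chi_A(1) = 1 - t + t - 1 = 0$. Hence $1$ is a root of $\chi_A$, which is to say $1$ is an eigenvalue of $A$. There is no real obstacle here; the only subtlety is conceptual, namely recognizing that the symmetry ${\rm tr}(A) = {\rm tr}(A^{-1})$ between the two middle coefficients of the (reciprocal-looking) characteristic polynomial forces it to be divisible by $x - 1$. In fact the argument additionally yields a factorisation $\chi_A(x) = (x-1)\bigl(x^2 + (1-t)x + 1\bigr)$, which could be noted in passing to make explicit that the other two eigenvalues are reciprocals of each other, although this is not needed for the statement of the lemma.
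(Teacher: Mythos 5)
Your proof is correct and follows essentially the same route as the paper: both invoke Lemma~\ref{lem:char-poly} and observe that the hypothesis ${\rm tr}(A^{-1})={\rm tr}(A)$ forces $x=1$ to be a root of $\chi_A$, the paper via the factorisation $\chi_A(x)=(x-1)\bigl(x^2-({\rm tr}(A)-1)x+1\bigr)$ and you via direct evaluation $\chi_A(1)=0$ (with the same factorisation noted in passing). Nothing is missing.
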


\begin{proof}
Using Lemma~\ref{lem:char-poly} we see that the characteristc polynomial of $A$ is
\begin{eqnarray*}
\chi_A(x) & = & x^3-{\rm tr}(A)x^2+{\rm tr}(A)x-1 \\
& = & (x-1)\bigl(x^2 -({\rm tr}(A)-1)x+1\bigr).
\end{eqnarray*}
The result follows.
\end{proof}

\medskip

Consider a subgroup $\Gamma=\langle A,B,C:CBA=I\rangle$ of ${\rm SL}(3,{\mathbb C})$ where all elements
have trace equal to the trace of their inverse. This means we must be in the branching locus of the quadratic
$Q$ given by \eqref{eqn:Q-y} whose roots are ${\rm tr}[A,B]$ and ${\rm tr}[B,A]={\rm tr}\bigl([A,B]^{-1}\bigr)$.
The following proposition shows that, in such a group, the equation of the branching locus factorises. In subsequent
sections we will characterise the different factors.

\begin{theorem}\label{thm:branch-fact}
Suppose that $A,B,C\in{\rm SL}(3,{\mathbb C})$ with $CBA=I$ satisfy
$$
\begin{array}{lll}
{\rm tr}(A)={\rm tr}(A^{-1}),\quad & {\rm tr}(B)={\rm tr}(B^{-1}),\quad &{\rm tr}(C)={\rm tr}(C^{-1}),\\
\sigma_+(A,B,C)=\sigma_-(A,B,C),\quad & {\rm tr}[A,B]={\rm tr}\bigl([A,B]^{-1}\bigr). &
\end{array}
$$
where $\sigma_+,\sigma_-$ are given by \eqref{eq:shape+} and \eqref{eq:shape-}. 
Write $a={\rm tr}(A)$, $b={\rm tr}(B)$, $c={\rm tr}(C)$. Then 
\begin{enumerate}
\item[(1)] either $\sigma_+=\sigma_- = 3-a-b-c$
and
$$
{\rm tr}[A,B] = -abc+a^2+b^2+c^2+ab+bc+ac -3a-3b-3c+3.
$$
\item[(2)] or $\sigma_+=\sigma_-$ is a root of the polynomial
\begin{eqnarray*}
T_2(t) & = & t^2-2(a+b+c+1)t \\
&& \quad -4abc+a^2+b^2+c^2-2ab-2bc-2ac-2a-2b-2c-3.
\end{eqnarray*}
and
$$
{\rm tr}[A,B] = (a+b+c+1)\sigma_++(a+1)(b+1)(c+1)-1.
$$
\end{enumerate}
\end{theorem}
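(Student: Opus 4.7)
The plan is to work with the symmetrised polynomials $S({\bf y})$ and $P({\bf y})$ of Theorem~\ref{thm-lawton-new}. Under the hypotheses, writing $a={\rm tr}(A)$, $b={\rm tr}(B)$, $c={\rm tr}(C)$ and $\sigma=\sigma_+=\sigma_-$, the variables in \eqref{eq:8-traces-y} specialise to $y_1=y_5=a$, $y_2=y_6=b$, $y_3=y_7=c$, $y_4=y_8=\sigma$. Substituting directly into \eqref{eq:S-y} gives
$$
S \;=\; a^2+b^2+c^2+\sigma^2-2abc-3,
$$
a quadratic in $\sigma$. The final hypothesis ${\rm tr}[A,B]={\rm tr}([A,B]^{-1})$ is equivalent to the quadratic $Q$ of \eqref{eqn:Q-y} having a repeated root, so $S^2-4P=0$ and the common value ${\rm tr}[A,B]={\rm tr}[B,A]$ equals $S/2$.

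The crux of the argument is then the polynomial identity
$$
S^2-4P \;=\; \bigl(\sigma-(3-a-b-c)\bigr)^2\,T_2(\sigma)
$$
in ${\mathbb Z}[a,b,c,\sigma]$, with $P$ obtained by the symmetric substitution above into \eqref{eq:P-y} and $T_2$ as in the statement. A degree check provides a quick sanity test: $S^2$ has degree $4$ in $\sigma$ and $P$ has degree $3$ (the $\sigma^3$ coming solely from $y_4^3+y_8^3=2\sigma^3$), so both sides are monic quartics in $\sigma$, and the coefficient of $\sigma^3$ works out to $-8$ on each side.

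Once the identity is established, the vanishing locus of $S^2-4P$ splits into the two cases of the theorem, and it remains to evaluate $S/2$ on each factor. In Case (1), substituting $\sigma=3-a-b-c$ into $S/2$ and expanding yields the claimed $-abc+a^2+b^2+c^2+ab+bc+ca-3a-3b-3c+3$. In Case (2), using $T_2(\sigma)=0$ to replace $\sigma^2$ in $S$ by $2(a+b+c+1)\sigma+4abc-a^2-b^2-c^2+2(ab+bc+ca)+2(a+b+c)+3$ converts $S$ into $2\bigl[(a+b+c+1)\sigma+abc+ab+bc+ca+a+b+c\bigr]$, and the identity $abc+ab+bc+ca+a+b+c=(a+1)(b+1)(c+1)-1$ then rewrites $S/2$ as $(a+b+c+1)\sigma+(a+1)(b+1)(c+1)-1$.

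The principal obstacle is the factorisation identity itself, which is a nontrivial polynomial computation: \eqref{eq:P-y} has many monomials, and while the symmetric substitution collapses a number of them the remaining quartic in $\sigma$ still carries substantial cross-terms in $a,b,c$. Rather than expanding and comparing coefficients directly, the cleanest path is to divide $S^2-4P$ by $\sigma-(3-a-b-c)$ twice in succession, verify that both remainders vanish, and then identify the monic quadratic quotient with $T_2(\sigma)$ by matching its constant and linear-in-$\sigma$ coefficients. The cyclic symmetry of the setup in $(a,b,c)$ provides a useful cross-check throughout.
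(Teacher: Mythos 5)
Your proposal is correct and follows essentially the same route as the paper: specialise $y_1=y_5=a$, $y_2=y_6=b$, $y_3=y_7=c$, $y_4=y_8=\sigma$ in $S$ and $P$, use the repeated-root condition $S^2-4P=0$ together with the factorisation $S^2-4P=(\sigma+a+b+c-3)^2\,T_2(\sigma)$, and then evaluate ${\rm tr}[A,B]=S/2$ in each case. The only difference is procedural (you verify the factorisation by successive division rather than the paper's direct expansion), and your case computations of $S/2$ check out.
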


\begin{proof}
Setting $y_1=y_5=a$, $y_2=y_6=b$, $y_3=y_7=c$ and $y_4=y_8=t$ in the polynomials $S$ and $P$ 
from \eqref{eq:S-y} and \eqref{eq:P-y} gives
\begin{eqnarray*}
S & = & t^2-2abc+a^2+b^2+c^2-3, \\
P & = & a^2b^2c^2 +2a^2b^2c+2a^2bc^2+2ab^2c^2 +a^2b^2+b^2c^2+a^2c^2  \\
&& \quad -4abc^2-4ab^2c-4a^2bc +2a^3+2b^3+2c^3 +6abc \\
&& \quad + 2a^2bct+2abc^2t+2ab^2ct + 2ab^2t+2a^2bt+2a^2ct+2ac^2t+2bc^2t+2b^2ct \\
&& \quad + 2(ab+bc+ac)(t^2-3t) + (a^2+b^2+c^2-6)t^2+2t^3+9.
\end{eqnarray*}
Since ${\rm tr}[A,B]={\rm tr}[B,A]$ the polynomial $Q$ from \eqref{eqn:Q-y} has repeated roots. This means 
that $0=S^2-4P$. Substituting from the above expressions we find
\begin{eqnarray*}
\lefteqn{S^2-4P} \\ 
& = & (t^2-2abc+a^2+b^2+c^2-3)^2 \\
&& \quad -4a^2b^2c^2 -8a^2b^2c-8a^2bc^2-8ab^2c^2 -4a^2b^2-4b^2c^2-4a^2c^2  \\
&& \quad +16abc^2+16ab^2c+16a^2bc -8a^3-8b^3-8c^3 -24abc \\
&& \quad -8a^2bct-8abc^2t-8ab^2ct \\
&&\quad -8ab^2t-8a^2bt-8a^2ct-8ac^2t-8bc^2t-8b^2ct \\
&& \quad -8(ab+bc+ac)(t^2-3t) -4(a^2+b^2+c^2-6)t^2+2t^3-36 \\
& = & t^4-8t^3-2(2abc+a^2+b^2+c^2+4ab+4bc+4ac-9)t^2\\ 
&& \quad -8(a^2bc+ab^2c+abc^2+a^2b+ab^2+b^2c+bc^2+a^2c+ac^2-3ab-3bc-3ac)t\\ 
&& \quad -4a^3bc-4ab^3c-4abc^3-8a^2b^2c-8a^2bc^2-8ab^2c^2\\ 
&& \quad +a^4+b^4+c^4-2a^2b^2-2b^2c^2-2a^2c^2+16a^2bc+16ab^2c+16abc^2\\ 
&& \quad -8a^3-8b^3-8c^3+18a^2+18b^2+18c^2-27\\ 
& = & (t+a+b+c-3)^2\\ 
&& \quad \cdot (t^2-2(a+b+c+1)t-4abc+a^2+b^2+c^2-2(ab+bc+ac+a+b+c)-3).\nonumber
\end{eqnarray*}
Therefore the possible values of $t=\sigma_+=\sigma_-$ correspond to the two cases in the statement
of the theorem. Substituting these into ${\rm tr}[A,B]=S/2$ gives the values of ${\rm tr}[A,B]$.
\end{proof}

\medskip

\subsection{Two-generator subgroups of ${\rm SL}(2,{\mathbb C})$}

We will use the following classical theorem of Fricke and Vogt; see Theorem A of Goldman \cite{Gol1}:

\begin{theorem}[Fricke, Vogt] \label{thm-fv}
Let $f:{\rm SL}(2,{\mathbb C})\times {\rm SL}(2,{\mathbb C})\longrightarrow {\mathbb C}$ be a regular function that is
invariant under the action of ${\rm SL}(2,{\mathbb C})$ by conjugation. Then there exists a polynomial function 
$F(x,y,z)\in{\mathbb C}[x,y,z]$ so that
$$
f(A,B)=F\bigl({\rm tr}(A),{\rm tr}(B),{\rm tr}(AB)\bigr).
$$
Furthermore, for all $(x,y,z)\in{\mathbb C}^3$ there exist $A,\,B\in{\rm SL}(2,{\mathbb C})$ so that
$$
{\rm tr}(A)=x,\quad {\rm tr}(B)=y,\quad {\rm tr}(AB)=z.
$$
\end{theorem}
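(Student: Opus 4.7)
The plan is to establish the two claims of the theorem separately: the realisation statement (surjectivity of the trace map $\tau(A,B):=\bigl({\rm tr}(A),{\rm tr}(B),{\rm tr}(AB)\bigr)$ onto ${\mathbb C}^3$) and the factorisation statement (every conjugation-invariant regular function factors through $\tau$). The realisation is an explicit matrix construction, while the factorisation argument proceeds in two steps: expressing every word trace as a polynomial in the three basic ones via Fricke's identity, and then showing these polynomials generate the whole invariant ring.

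For the realisation, given $(x,y,z)\in{\mathbb C}^3$ I would take $A$ in companion form with trace $x$, namely $A=\bigl(\begin{smallmatrix}x & -1 \\ 1 & 0\end{smallmatrix}\bigr)$, and look for $B=\bigl(\begin{smallmatrix}0 & \beta \\ \gamma & y\end{smallmatrix}\bigr)$ with $\beta\gamma=-1$ (so that $\det B=1$). A direct calculation gives ${\rm tr}(AB)=\beta-\gamma$; eliminating $\beta=-1/\gamma$ reduces the condition ${\rm tr}(AB)=z$ to the quadratic $\gamma^{2}+z\gamma+1=0$, which has a root in ${\mathbb C}$ for every $z$.

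For the factorisation, the first step is the Fricke identity. Cayley--Hamilton for $M\in{\rm SL}(2,{\mathbb C})$ reads $M+M^{-1}={\rm tr}(M)\,I$, and multiplying by any $N$ and taking traces yields ${\rm tr}(MN)+{\rm tr}(M^{-1}N)={\rm tr}(M)\,{\rm tr}(N)$. An induction on word length using this identity shows that ${\rm tr}\bigl(w(A,B)\bigr)$ is a polynomial in $x={\rm tr}(A)$, $y={\rm tr}(B)$, $z={\rm tr}(AB)$ for every word $w$ in $A^{\pm 1}, B^{\pm 1}$. The second step is orbit separation on the irreducible locus $U\subset {\rm SL}(2,{\mathbb C})\times{\rm SL}(2,{\mathbb C})$: for $(A,B)\in U$ I conjugate $A$ into the companion form above, after which the remaining freedom is the one-dimensional centraliser $Z(A)$, and the irreducibility of $(A,B)$ forces $B$ to have nonzero off-diagonal entries. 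A short calculation shows that $\det B=1$, ${\rm tr}(B)=y$, ${\rm tr}(AB)=z$ together with the $Z(A)$-action cut out a single orbit, so $\tau$ separates ${\rm SL}(2,{\mathbb C})$-orbits on $U$. Combined with Hilbert--Nagata finite generation of the invariant ring (since ${\rm SL}(2,{\mathbb C})$ is reductive) and density of $U$, this identifies $\tau$ with the categorical quotient, yielding the desired polynomial $F$.

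The main obstacle I expect is the transition from the irreducible locus $U$ to its complement: reducible conjugation orbits are generally not closed, so a naive orbit-by-orbit separation breaks down. This is remedied either by working with the semisimplification of a reducible pair (the unique closed orbit in its orbit closure, obtained by simultaneous diagonalisation, whose ${\rm tr}$-values determine it up to the swap of eigenvalues, which conjugation inside the normaliser of the diagonal handles) or, more conceptually, by invoking the GIT identification of ${\rm Spec}\bigl({\mathbb C}[V]^{G}\bigr)$ with the variety of closed orbits.
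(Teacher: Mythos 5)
The paper gives no proof of Theorem~\ref{thm-fv} at all: it is imported as a classical result (Theorem A of Goldman \cite{Gol1}), and only the realisation statement and the explicit identities \eqref{eq-fv1}--\eqref{eq-fv2} are used afterwards. So your proposal is necessarily a different route: you actually prove the theorem. The realisation half is complete and correct: with $A=\bigl(\begin{smallmatrix}x&-1\\1&0\end{smallmatrix}\bigr)$ and $B=\bigl(\begin{smallmatrix}0&\beta\\ \gamma&y\end{smallmatrix}\bigr)$, $\beta\gamma=-1$, one indeed gets ${\rm tr}(AB)=\beta-\gamma$, and $\gamma^{2}+z\gamma+1=0$ always has a nonzero complex root. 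The factorisation half follows the standard trace-coordinate/GIT argument and is sound in outline, but two steps need tightening. First, once $A$ is in companion form the obstruction to irreducibility is \emph{not} that $B$ has a zero off-diagonal entry; that characterisation belongs to the normal form in which $A$ is diagonal (and the case ${\rm tr}(A)=\pm2$, $A$ a Jordan block, needs its own short computation). The conclusion you want -- that ${\rm tr}(A),{\rm tr}(B),{\rm tr}(AB)$ separate irreducible orbits -- is nevertheless true, and is exactly the Goldman proposition quoted immediately after Theorem~\ref{thm-fv} in the paper, since ${\rm tr}[A,B]\neq2$ is equivalent to irreducibility of the pair. Second, surjectivity plus separation of closed orbits only shows that the induced morphism ${\rm Spec}\,{\mathbb C}[V]^{G}\longrightarrow{\mathbb C}^{3}$ is bijective on points; bijectivity alone does not give generation of the invariant ring by the three traces (think of $t\mapsto(t^{2},t^{3})$), so you must add that in characteristic zero a bijective morphism onto the normal variety ${\mathbb C}^{3}$ is birational and hence an isomorphism by Zariski's main theorem -- or bypass the quotient argument entirely by quoting the first fundamental theorem for invariants of pairs of $2\times2$ matrices (generators ${\rm tr}(A),{\rm tr}(B),{\rm tr}(AB),\det(A),\det(B)$) together with surjectivity, by reductivity, of restriction of invariants to the closed subvariety ${\rm SL}(2,{\mathbb C})\times{\rm SL}(2,{\mathbb C})$. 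With either completion your argument proves the theorem; what the paper's citation buys is brevity, while your route actually explains why three traces suffice and makes the role of closed (completely reducible) orbits explicit.
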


In particular, Fricke and Vogt show we can express ${\rm tr}(A^{-1}B)$ or ${\rm tr}[A,B]$ as the following 
polynomials in ${\rm tr}(A)$, ${\rm tr}(B)$ and ${\rm tr}(AB)$:
\begin{eqnarray}
{\rm tr}(A^{-1}B) & = & {\rm tr}(A){\rm tr}(B)-{\rm tr}(AB), \label{eq-fv1} \\
{\rm tr}[A,B] & = & {\rm tr}^2(A)+{\rm tr}^2(B)+{\rm tr}^2(AB)-2-{\rm tr}(A){\rm tr}(B){\rm tr}(AB). \label{eq-fv2}
\end{eqnarray} 

This theorem almost says that the traces ${\rm tr}(A)$, ${\rm tr}(B)$, ${\rm tr}(AB)$ determine the pair $(A,B)$ up to conjugation.
In fact to get this statement we need to exclude the case where $A$ and $B$ commute.

\begin{proposition}[Section~2.2 of Goldman \cite{Gol1}] Let $A,\,B,\,A',\,B'\in{\rm SL}(2,{\mathbb C})$. Suppose 
$$
{\rm tr}(A)={\rm tr}(A'),\quad {\rm tr}(B)={\rm tr}(B'),\quad {\rm tr}(AB)={\rm tr}(A'B')
$$
and ${\rm tr}[A,B]\neq 2$ (so also ${\rm tr}[A',B']\neq 2$). Then there exists $D\in{\rm SL}(2,{\mathbb C})$ so that
$A'=DAD^{-1}$ and $B'=DBD^{-1}$.
\end{proposition}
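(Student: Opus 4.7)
The hypothesis $\operatorname{tr}[A,B]\neq 2$ can be reinterpreted via identity \eqref{eq-fv2}: substituting into the formula for $\operatorname{tr}[A,B]$, this is precisely the condition that $\langle A,B\rangle$ acts irreducibly on $\mathbb{C}^2$, i.e.\ that $A$ and $B$ share no common eigenvector. In particular neither $A$ nor $B$ equals $\pm I$ (otherwise $[A,B]=I$ and $\operatorname{tr}[A,B]=2$). The plan is to put $(A,B)$ into a rigid normal form that is determined by the three trace coordinates, then observe that $(A',B')$ admits the same normal form, and compose the two normalizing conjugations to obtain $D$.

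First I would consider the generic case $\operatorname{tr}(A)\neq\pm 2$. Then $A$ is diagonalizable with distinct eigenvalues $\lambda,\lambda^{-1}$, the two roots of $x^2-\operatorname{tr}(A)x+1$, and after conjugation we may assume
\[
A=\begin{pmatrix}\lambda & 0\\ 0 & \lambda^{-1}\end{pmatrix}.
\]
Write $B=\begin{pmatrix}p & q\\ r & s\end{pmatrix}$ with $ps-qr=1$. Irreducibility forces $q\neq 0$ and $r\neq 0$, since a zero off-diagonal entry of $B$ produces a common eigenvector with $A$. The centraliser of $A$ modulo the Weyl reflection consists of diagonal matrices $\operatorname{diag}(\mu,\mu^{-1})$, which transform $q\mapsto \mu^2 q$, so a unique choice of $\mu$ up to sign normalizes $q=1$, whence $r=ps-1$. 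The three trace identities become
\[
\operatorname{tr}(A)=\lambda+\lambda^{-1},\qquad \operatorname{tr}(B)=p+s,\qquad \operatorname{tr}(AB)=\lambda p+\lambda^{-1}s,
\]
and since $\lambda\neq\lambda^{-1}$ the last two form an invertible linear system determining $p$ and $s$. Thus $B$ is completely pinned down by the three traces. The Weyl ambiguity $\lambda\leftrightarrow\lambda^{-1}$ is absorbed by conjugating by $\bigl(\begin{smallmatrix}0&1\\-1&0\end{smallmatrix}\bigr)$, which also exchanges the roles of $q$ and $r$, so a unique normal form for $(A,B)$ results.

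The case $\operatorname{tr}(A)=\pm 2$ is the main subtlety. Here $A$ is parabolic (not $\pm I$), and we may conjugate it to $A=\varepsilon\bigl(\begin{smallmatrix}1 & 1\\0 & 1\end{smallmatrix}\bigr)$ with $\varepsilon=\pm 1$. The residual centraliser is $\pm\bigl(\begin{smallmatrix}1 & t\\0 & 1\end{smallmatrix}\bigr)$, a one-parameter subgroup. Writing $B=\bigl(\begin{smallmatrix}p & q\\r & s\end{smallmatrix}\bigr)$, irreducibility forces $r\neq 0$. Conjugation by $\bigl(\begin{smallmatrix}1 & t\\0 & 1\end{smallmatrix}\bigr)$ sends $p\mapsto p+tr$, so a unique $t$ normalizes $p=0$; then $s=\operatorname{tr}(B)$, $q=-1/r$ from $\det B=1$, and $\operatorname{tr}(AB)=\varepsilon(p+r+s)=\varepsilon(r+\operatorname{tr}(B))$ recovers $r$ uniquely. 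Again the three traces determine the normal form.

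Applying the same normalization to $(A',B')$ yields identical matrices by the trace hypotheses, so the composition of one normalizing conjugation with the inverse of the other supplies the desired $D\in\operatorname{SL}(2,\mathbb{C})$. The main obstacle is simply bookkeeping in the parabolic case and checking that the residual discrete ambiguities (the Weyl element, and the sign $\pm$ in the centraliser) cancel consistently for both pairs; no deep input beyond Fricke--Vogt and the irreducibility criterion is required.
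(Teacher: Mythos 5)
Your proof is correct, and it is worth noting at the outset that the paper itself gives no argument here: the proposition is quoted verbatim from Section~2.2 of Goldman's trace-coordinates survey, so there is no internal proof to compare against. Your normal-form argument is a legitimate, self-contained route. The key facts you use all check out: ${\rm tr}[A,B]\neq 2$ does rule out $A=\pm I$, $B=\pm I$ and a common eigenvector (the easy direction of the standard equivalence, which is all you need); in the regular semisimple case the residual torus $\mathrm{diag}(\mu,\mu^{-1})$ scales $q\mapsto\mu^2 q$, the system ${\rm tr}(B)=p+s$, ${\rm tr}(AB)=\lambda p+\lambda^{-1}s$ is invertible precisely because $\lambda\neq\lambda^{-1}$, and $r=ps-1$; in the parabolic case the centraliser $\pm\bigl(\begin{smallmatrix}1&t\\0&1\end{smallmatrix}\bigr)$ with $r\neq0$ lets you kill $p$, after which $s$, $r$, $q$ are pinned down exactly as you say. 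One small point of presentation: the Weyl ambiguity $\lambda\leftrightarrow\lambda^{-1}$ does not literally give a \emph{unique} normal form; rather, you should simply fix the same root $\lambda$ of $x^2-{\rm tr}(A)x+1$ for both pairs (possible since ${\rm tr}(A)={\rm tr}(A')$), and then both normal forms coincide on the nose, which is all the construction of $D$ requires. For comparison, Goldman's own treatment (and the philosophy the paper leans on elsewhere, e.g.\ in Lawton's Theorem~\ref{thm-lawton}) is character-theoretic: irreducibility plus ${\rm tr}[A,B]\neq2$ implies $\{I,A,B,AB\}$ spans $M_2({\mathbb C})$, so the character determines the representation up to conjugacy without choosing coordinates. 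Your approach trades that structural statement for explicit matrix bookkeeping, at the cost of a case split on ${\rm tr}(A)=\pm2$ but with the benefit of being completely elementary.
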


In the case where $A$, $B$, $C$ are loxodromic (hyperbolic) elements of ${\rm SL}(2,{\mathbb R})$ satisfying
$CBA=I$ there are various possibilities for the configuration of their axes in the hyperbolic plane. We are interested
in the case where the axes are pairwise disjoint and bound a common region. We can characterise this
configuration using traces.

\begin{proposition}[Gilman and Maskit \cite{G-M}]\label{prop-gm}
Let $A,\,B\,C$ be hyperbolic elements of ${\rm SL}(2,\mathbb{R})$ with $CBA=I$. Suppose that the axes of 
$A$, $B$ and $C$ are pairwise disjoint and that they bound a region in the hyperbolic plane. Then
\begin{equation*}
{\rm tr}(A){\rm tr}(B){\rm tr}(C)<0
\end{equation*}
\end{proposition}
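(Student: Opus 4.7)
The plan is to combine an invariance-of-sign observation with a continuity argument on a connected moduli space, verified by one explicit computation.

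First, I would note that $\operatorname{sign}\bigl({\rm tr}(A){\rm tr}(B){\rm tr}(C)\bigr)$ depends only on the image of $(A,B,C)$ in ${\rm PSL}(2,{\mathbb R})$. Indeed, given $CBA=I$ in ${\rm SL}(2,{\mathbb R})$, replacing $A$ by $-A$ (with $B$ fixed) forces $C\mapsto -C$, so $(a,b,c)\mapsto(-a,b,-c)$; similarly flipping the sign of $B$ yields $(a,b,c)\mapsto(a,-b,-c)$. All four lifts of a given ${\rm PSL}(2,{\mathbb R})$-triple therefore share the same value of $abc$.

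Next, under the hypotheses the group $\langle A,B\rangle$ is a discrete faithful Fuchsian representation of the fundamental group of a hyperbolic pair of pants with geodesic boundary, the axes of $A$, $B$, $C$ being the three boundary geodesics. By the classical right-angled hexagon decomposition (equivalently, Teichm\"uller theory of pants), such representations are parametrized up to ${\rm PSL}(2,{\mathbb R})$-conjugation by the triple of boundary lengths $(\ell_A,\ell_B,\ell_C)\in(0,\infty)^3$, a connected space. Since $|{\rm tr}(A)|=2\cosh(\ell_A/2)>2$ and similarly for $B$, $C$, the product $abc$ is continuous and nowhere zero on this moduli space, hence its sign is constant.

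Finally, I would determine the sign by a direct computation at one point. Take $A={\rm diag}(3,1/3)$, whose axis is the $y$-axis and for which ${\rm tr}(A)=10/3$, and let $B$ be the ${\rm SL}(2,{\mathbb R})$-conjugate of ${\rm diag}(3,1/3)$ by a M\"obius transformation sending $\{0,\infty\}$ to $\{-10,-5\}$, so ${\rm tr}(B)=10/3$ and the axis of $B$ is the semicircle from $-10$ to $-5$. A short matrix calculation yields ${\rm tr}(BA)=-46/9$, hence $C=(BA)^{-1}$ is hyperbolic with ${\rm tr}(C)=-46/9<0$ and axis the semicircle whose endpoints are the roots of $9z^2+50z+50=0$, approximately $-4.25$ and $-1.31$. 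On $\partial{\mathbb H}^2$ these endpoints occur in the cyclic order $0,\infty,-10,-5,-4.25,-1.31$, which is the sequential (``hexagonal'') pattern; so the three axes are pairwise disjoint and bound a common region. Thus ${\rm tr}(A){\rm tr}(B){\rm tr}(C)<0$ for this triple, and by the constancy of $\operatorname{sign}(abc)$, for every admissible triple.

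The main obstacle is the second step: translating the metric hypothesis on $(A,B,C)$ into a Fuchsian pants representation and invoking connectedness of the moduli space of the latter. A self-contained alternative avoids this by constructing, for any admissible $(A,B,C)$, an explicit continuous path in ${\rm SL}(2,{\mathbb R})^3$ to the reference triple above, staying within the class of admissible triples throughout; continuity of $abc$ and its nonvanishing then conclude the argument.
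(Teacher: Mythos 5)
The paper does not prove this proposition at all: it is quoted with a citation to Gilman--Maskit, so your argument has to stand on its own. Two of your three steps do stand: the sign of ${\rm tr}(A){\rm tr}(B){\rm tr}(C)$ is indeed independent of the choice of ${\rm SL}(2,{\mathbb R})$-lifts compatible with $CBA=I$, and your sample computation is correct (with the assignment $0\mapsto-10$, $\infty\mapsto-5$ one gets ${\rm tr}(BA)=-46/9$, the fixed points of $BA$ are the roots of $9z^2+50z+50=0$, and the six endpoints do occur in the unlinked, ``each pair adjacent'' cyclic order, so the three axes bound a common region).

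The gap is the second step, and it is the crux. The hypotheses of the proposition are purely about the configuration of the three axes; discreteness and faithfulness of $\langle A,B\rangle$ are not assumed. Your assertion that every admissible triple is the holonomy of a hyperbolic pair of pants with the three axes as boundary geodesics is a genuine theorem --- essentially the content of the Gilman--Maskit analysis being cited --- and without it you cannot invoke the right-angled-hexagon/Teichm\"uller parametrisation by boundary lengths, nor its connectedness. (Note that an arbitrary pair of hyperbolic elements with disjoint axes and hyperbolic product need not generate a discrete group; it is exactly the ``bound a common region'' condition that has to be exploited, and it encodes delicate information about translation directions. Indeed, in your own example replacing $B$ by $B^{-1}$ keeps $A$, $B$ hyperbolic with the same disjoint axes and keeps $C=(B^{-1}A)^{-1}$ hyperbolic, but now ${\rm tr}(B^{-1}A)=+146/9$ and the axis of the new $C$, with endpoints the roots of $9z^2+100z+50=0$, separates the other two axes --- so the trace product flips sign precisely when the common-region hypothesis fails.) Your proposed ``self-contained alternative'' --- an explicit path of admissible triples from an arbitrary admissible triple to the reference one --- is not a way around this: proving such a path exists and stays admissible is exactly the path-connectedness statement you would otherwise extract from the pants classification, and it is not carried out. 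As it stands the proof reduces the proposition to an unproved claim of comparable depth. A shorter and genuinely self-contained route is the direct one: conjugate so that $A$ is diagonal with $\lambda>1$, translate the hypotheses (disjointness of the axes of $B$ and $C$ from the axis of $A$, and the common-region condition) into sign conditions on the fixed points and matrix entries of $B$, and read off the sign of ${\rm tr}(BA)$; this is in the spirit of the argument in the cited source.
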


\subsection{Reducible representations}

Suppose that $\widehat{A},\,\widehat{B},\,\widehat{C}$ are elements of ${\rm SL}(2,{\mathbb C})$ with 
$\widehat{C}\widehat{B}\widehat{A}=I$. 
Then we define the following
block diagonal elements of ${\rm SL}(3,{\mathbb C})$:
\begin{equation}\label{eq-red-A-B-C}
A=\left(\begin{matrix} \widehat{A} & 0 \\ 0 & 1 \end{matrix}\right), \quad 
B=\left(\begin{matrix} \widehat{B} & 0 \\ 0 & 1 \end{matrix}\right), \quad 
C=\left(\begin{matrix} \widehat{C} & 0 \\ 0 & 1 \end{matrix}\right).
\end{equation}
Clearly we have $CBA=I$. It is also clear that ${\rm tr}(A^{-1})={\rm tr}(A)$, ${\rm tr}(B^{-1})={\rm tr}(B)$ and 
${\rm tr}(C^{-1})={\rm tr}(C)$. Similarly ${\rm tr}([A,B]^{-1})={\rm tr}[A,B]$.
Note that in this case the traces do not determine the group up to conjugation. In order to see this, observe that if ${\bf a}$
and ${\bf b}$ are any column vectors in ${\mathbb C}^2$ then 
$$
A'=\left(\begin{matrix} \widehat{A} & {\bf a} \\ 0 & 1 \end{matrix}\right), \quad 
B'=\left(\begin{matrix} \widehat{B} & {\bf b} \\ 0 & 1 \end{matrix}\right), \quad 
C'=\left(\begin{matrix} \widehat{C} & -\widehat{C}\widehat{B}{\bf a} -\widehat{C}{\bf b} \\ 0 & 1 \end{matrix}\right)
$$
satisfy $C'B'A'=I$ and ${\rm tr}(A')={\rm tr}(A)$ etc.

Note that there are other reducible representations, for example in \eqref{eq-red-A-B-C}
we can multiply $\widehat{A}$ by $\lambda\in{\mathbb C}-\{0\}$ and in $A$ make the bottom right hand 
entry $\lambda^{-2}$ instead of $1$. 
Similarly for $B$ and $C$. Such representations do not satisfy ${\rm tr}(A^{-1})={\rm tr}(A)$, and we will not consider them here. 

It is straightforward to use equations \eqref{eq-fv1} to write ${\rm tr}(A^{-1}B)$, and hence the shape invariant 
$\sigma_+=\sigma_-$ in terms of ${\rm tr}(A)$, ${\rm tr}(B)$ and ${\rm tr}(C)$.

\begin{lemma}\label{lem-shape-red}
Let $A,\,B,\,C\in{\rm SL}(3,{\mathbb C})$ with $CBA=I$ be as given in \eqref{eq-red-A-B-C}. Let
$\sigma_+$ and $\sigma_-$ be given by \eqref{eq:shape+} and  \eqref{eq:shape-}. Then
\begin{equation}\label{eq-red-shape}
\sigma_+=\sigma_-=3-{\rm tr}(A)-{\rm tr}(B)-{\rm tr}(C).
\end{equation}
\end{lemma}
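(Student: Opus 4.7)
The strategy is to exploit the block-diagonal form of $A$, $B$, $C$ to reduce the calculation of traces in ${\rm SL}(3,{\mathbb C})$ to calculations of traces in ${\rm SL}(2,{\mathbb C})$, and then to invoke Fricke--Vogt (specifically the polynomial identity \eqref{eq-fv1}) to rewrite ${\rm tr}(\widehat{A}^{-1}\widehat{B})$ in terms of the three traces ${\rm tr}(\widehat{A})$, ${\rm tr}(\widehat{B})$, ${\rm tr}(\widehat{C})$.

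First I would observe that for any matrix $M$ of the block form appearing in \eqref{eq-red-A-B-C} we have ${\rm tr}(M)={\rm tr}(\widehat{M})+1$, and that the product of two such matrices is again block-diagonal of the same shape. Hence ${\rm tr}(A^{-1}B)={\rm tr}(\widehat{A}^{-1}\widehat{B})+1$ and ${\rm tr}(B^{-1}A)={\rm tr}(\widehat{B}^{-1}\widehat{A})+1$. Since every element of ${\rm SL}(2,{\mathbb C})$ has the same trace as its inverse, these two quantities are equal; this already shows $\sigma_+=\sigma_-$, so it suffices to compute $\sigma_+$.

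Next, Theorem~\ref{thm-fv} (more precisely the identity \eqref{eq-fv1}) gives
$$
{\rm tr}(\widehat{A}^{-1}\widehat{B})={\rm tr}(\widehat{A}){\rm tr}(\widehat{B})-{\rm tr}(\widehat{A}\widehat{B}).
$$
The relation $\widehat{C}\widehat{B}\widehat{A}=I$ implies $\widehat{B}\widehat{A}=\widehat{C}^{-1}$, so ${\rm tr}(\widehat{A}\widehat{B})={\rm tr}(\widehat{B}\widehat{A})={\rm tr}(\widehat{C}^{-1})={\rm tr}(\widehat{C})$. Writing $a={\rm tr}(A)$, $b={\rm tr}(B)$, $c={\rm tr}(C)$, so that ${\rm tr}(\widehat{A})=a-1$ etc., substitution yields
$$
{\rm tr}(A^{-1}B)=(a-1)(b-1)-(c-1)+1=ab-a-b-c+3.
$$
Finally, since ${\rm tr}(A^{-1})={\rm tr}(A)=a$ and ${\rm tr}(B)=b$, the definition \eqref{eq:shape+} gives
$$
\sigma_+={\rm tr}(A^{-1}B)-{\rm tr}(A^{-1}){\rm tr}(B)=ab-a-b-c+3-ab=3-a-b-c,
$$
which is \eqref{eq-red-shape}.

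There is no real obstacle here; the proof is a short computation. The only small point to be careful about is the reduction of ${\rm tr}(\widehat{A}\widehat{B})$ to ${\rm tr}(\widehat{C})$ via the relation $\widehat{C}\widehat{B}\widehat{A}=I$ together with cyclicity of trace, and the observation that ${\rm tr}(M)={\rm tr}(M^{-1})$ in ${\rm SL}(2,{\mathbb C})$, which simultaneously collapses $\sigma_+$ and $\sigma_-$ to the same expression and ensures consistency with the fact established earlier (see Theorem~\ref{thm:branch-fact}(1)) that $\sigma_+=\sigma_-=3-a-b-c$ in this branching regime.
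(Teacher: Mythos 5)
Your proof is correct and follows essentially the same route as the paper: reduce to the $2\times 2$ blocks via ${\rm tr}(M)={\rm tr}(\widehat{M})+1$, apply the Fricke--Vogt identity \eqref{eq-fv1} together with $\widehat{B}\widehat{A}=\widehat{C}^{-1}$, and use the ${\rm SL}(2,{\mathbb C})$ fact that a matrix and its inverse have equal trace to obtain $\sigma_+=\sigma_-$. The only difference is cosmetic bookkeeping of where the $+1$ shifts are carried out.
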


\begin{proof}
First observe that ${\rm tr}(A)={\rm tr}(\widehat{A})+1$ and so on. Therefore, using \eqref{eq-fv1} we have
\begin{eqnarray*}
\sigma_+ & = & {\rm tr}(A^{-1}B)-{\rm tr}(A^{-1}){\rm tr}(B) \\
& = & \bigl({\rm tr}(\widehat{A}^{-1}\widehat{B})+1\bigr)-\bigl({\rm tr}(\widehat{A})+1\bigr)\bigl({\rm tr}(\widehat{B})+1\bigr) \\
& = & {\rm tr}(\widehat{A}){\rm tr}(\widehat{B})-{\rm tr}(\widehat{C})+1
-{\rm tr}(\widehat{A}){\rm tr}(\widehat{B})-{\rm tr}(\widehat{A})-{\rm tr}(\widehat{B})-1 \\
& = & 3-{\rm tr}(A)-{\rm tr}(B)-{\rm tr}(C).
\end{eqnarray*}
Since each of the traces of $A$, $B$ and $A^{-1}B$ equals the trace of its inverse we have $\sigma_-=\sigma_+$.
\end{proof}

\medskip

\begin{theorem}\label{thm-red}
Let $A,\,B,\,C$ be any elements of ${\rm SL}(3,{\mathbb C})$ satisfying:
\begin{enumerate}
\item[(a)] $CBA=I$, 
\item[(b)] ${\rm tr}(A^{-1})={\rm tr}(A)$, ${\rm tr}(B^{-1})={\rm tr}(B)$, ${\rm tr}(C^{-1})={\rm tr}(C)$ and 
${\rm tr}([A,B]^{-1})={\rm tr}[A,B]$,
\item[(c)] $\sigma_+(A,B,C)=\sigma_-(A,B,C)$.
\end{enumerate}
If $\sigma_+(A,B,C)=3-{\rm tr}(A)-{\rm tr}(B)-{\rm tr}(C)$ then the group 
$\langle A,B,C:CBA=I\rangle$ is reducible.
\end{theorem}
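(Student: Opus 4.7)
The plan is to use the uniqueness statement in Lawton's theorem (Theorem~\ref{thm-lawton-new}, part~(3), together with the remark following Theorem~\ref{thm-lawton}) by exhibiting an explicitly reducible representation with exactly the same Lawton invariants as $\langle A,B,C\rangle$, and then arguing by contradiction. Write $a={\rm tr}(A)$, $b={\rm tr}(B)$, $c={\rm tr}(C)$; by hypotheses (b) and (c) we have ${\rm tr}(A^{-1})=a$, ${\rm tr}(B^{-1})=b$, ${\rm tr}(C^{-1})=c$, and $\sigma_+=\sigma_-=3-a-b-c$.

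First I would build the reducible candidate. By the Fricke--Vogt theorem (Theorem~\ref{thm-fv}) there exist $\widehat{A},\widehat{B}\in{\rm SL}(2,{\mathbb C})$ with
$${\rm tr}(\widehat{A})=a-1,\quad {\rm tr}(\widehat{B})=b-1,\quad {\rm tr}(\widehat{A}\widehat{B})=c-1.$$
Setting $\widehat{C}=(\widehat{A}\widehat{B})^{-1}$ ensures $\widehat{C}\widehat{B}\widehat{A}=I$, and the block matrices $A',B',C'$ defined as in \eqref{eq-red-A-B-C} satisfy $C'B'A'=I$ and manifestly share the common fixed vector $(0,0,1)^T$, so $\langle A',B',C'\rangle$ is reducible.

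Next I would verify that $\langle A',B',C'\rangle$ has the same Lawton data as $\langle A,B,C\rangle$. The traces match by construction, and since ${\rm tr}(\widehat{X}^{-1})={\rm tr}(\widehat{X})$ in ${\rm SL}(2,{\mathbb C})$, we also have ${\rm tr}(A'^{-1})=a$, ${\rm tr}(B'^{-1})=b$, ${\rm tr}(C'^{-1})=c$. By Lemma~\ref{lem-shape-red} the shape invariants satisfy $\sigma_+(A',B',C')=\sigma_-(A',B',C')=3-a-b-c$, matching the hypothesis. For the commutator trace, since the block-diagonal structure gives $[A',B']=\bigl(\begin{smallmatrix}[\widehat{A},\widehat{B}]&0\\ 0&1\end{smallmatrix}\bigr)$, the Fricke--Vogt formula \eqref{eq-fv2} applied with $(\hat a,\hat b,\hat c)=(a-1,b-1,c-1)$ yields after expansion
$${\rm tr}[A',B']\;=\;\hat a^2+\hat b^2+\hat c^2-\hat a\hat b\hat c-2+1
\;=\;-abc+a^2+b^2+c^2+ab+bc+ac-3a-3b-3c+3,$$
which is exactly the value forced on ${\rm tr}[A,B]$ by Theorem~\ref{thm:branch-fact}~(1) under our hypotheses. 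In particular both groups lie in the branching locus $S^2=4P$ of the quadratic $Q$ and select the same (repeated) root.

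Finally I would conclude. Suppose, for contradiction, that $\langle A,B,C\rangle$ is irreducible. By Theorem~\ref{thm-lawton-new}~(3), an irreducible subgroup of ${\rm SL}(3,{\mathbb C})$ is determined up to conjugation by its eight traces together with the choice of root of $Q$. The remark following Theorem~\ref{thm-lawton} then forces \emph{any} representation with the same data --- in particular $\langle A',B',C'\rangle$ --- to be conjugate to $\langle A,B,C\rangle$ and hence irreducible. This contradicts the fact that $\langle A',B',C'\rangle$ fixes a line in ${\mathbb C}^3$. The main obstacle is the polynomial identity in Step 2 verifying that ${\rm tr}[A',B']$ agrees with Theorem~\ref{thm:branch-fact}~(1), but this is a direct substitution into \eqref{eq-fv2}; the conceptual content of the proof is entirely Lawton's rigidity.
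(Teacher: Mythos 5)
Your proposal is correct and follows essentially the same route as the paper: build a block-reducible representation with the same traces via Fricke--Vogt, match the shape invariants using Lemma~\ref{lem-shape-red}, and invoke Lawton's rigidity (Theorem~\ref{thm-lawton}~(3) and the remark following it) to force a contradiction with irreducibility. Your extra verification that ${\rm tr}[A',B']$ agrees with the value from Theorem~\ref{thm:branch-fact}~(1), so that both groups lie in the branching locus and the root of $Q$ is the same, is a welcome elaboration of a step the paper leaves implicit, but it is not a different argument.
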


\begin{proof}
Define $a:={\rm tr}(A)$, $b:={\rm tr}(B)$, $c:={\rm tr}(C)$. 

Using the theorem of Fricke and Vogt, Theorem~\ref{thm-fv} we can find 
$\widehat{A},\,\widehat{B},\,\widehat{C}\in{\rm SL}(2,{\mathbb C})$ with $\widehat{C}\widehat{B}\widehat{A}=I$ 
and so that ${\rm tr}(\widehat{A})=a-1$, ${\rm tr}(\widehat{B})=b-1$, ${\rm tr}(\widehat{C})=c-1$. 
Thus, we can construct 
$A_0$, $B_0$, $C_0$ in ${\rm SL}(3,{\mathbb C})$ of the form \eqref{eq-red-A-B-C} with 
${\rm tr}(A_0)=a$, ${\rm tr}(B_0)=b$ and ${\rm tr}(C_0)=c$.  
Using Lemma~\ref{lem-shape-red} we have
$$
(\sigma_0)_+ =(\sigma_0)_- = 3-{\rm tr}(A_0)-{\rm tr}(B_0)-{\rm tr}(C_0) \\
$$
Therefore, there is a reducible representation $\langle A_0,B_0,C_0:C_0B_0A_0=I\rangle$ for which the eight traces 
agree with those of $\langle A,B,C:CBA=I\rangle$. Hence, the latter group must also be reducible (see 
Lawton's theorem, Theorem~\ref{thm-lawton} (3), and the remark following this theorem). 
\end{proof}

\medskip 

\subsection{Irreducible representations}\label{sec-irred}

In this section, we consider the irreducible representation of ${\rm SL}(2,{\mathbb K})$ to ${\rm SL}(3,{\mathbb C})$ for
${\mathbb K}={\mathbb R}$ or ${\mathbb C}$.  

Consider the following map from ${\mathbb K}^2$ to ${\mathbb K}^3$:
$$
\Phi:{\bf w}=\left(\begin{matrix} w_1 \\ w_2 \end{matrix}\right) \longmapsto 
\left(\begin{matrix} -w_1^2 \\ \sqrt{2}\,w_1w_2 \\ w_2^2 \end{matrix}\right).
$$
Writing $z_1=-w_1^2$, $z_2=\sqrt{2}\,w_1w_2$ and $z_3=w_2^2$ we see that the image of of $\Phi$ satisfies
$2z_1z_3+z_2^2=0$. We can write the latter equation in terms of a quadratic form.
Let 
\begin{equation}\label{eq:J}
J=\left(\begin{matrix} 0 & 0 & 1 \\ 0 & 1 & 0 \\ 1 & 0 & 0 \end{matrix}\right).
\end{equation}
Then we can write
$$
2z_1z_3+z_2^2=(\begin{matrix} z_1 & z_2 & z_3 \end{matrix})
\left(\begin{matrix} 0 & 0 & 1 \\ 0 & 1 & 0 \\ 1 & 0 & 0 \end{matrix}\right)
\left(\begin{matrix} z_1 \\ z_2 \\ z_3 \end{matrix}\right) ={\bf z}^t J{\bf z}.
$$
Therefore $\Phi({\bf w})$ lies in the zero set of the quadratic form defined by $J$. It is easy to check $J$ has 
signature $(2,1)$, that is it has two positive eigenvalues (both $+1$) and one negative eigenvalue (which is $-1$).

Now consider ${\rm SL}(2,{\mathbb K})$. It acts naturally by left multiplication on ${\mathbb K}^2$. Applying $\Phi$
enables to to construct a map $\Phi_*:{\rm SL}(2,{\mathbb K}) \longmapsto {\rm SL}(3,{\mathbb K})$ as follows
$$
\Phi_*(\widehat{A})\Phi({\bf w}) = \Phi(\widehat{A}{\bf w}).
$$
A short calculation gives
\begin{equation}\label{eq-Phi*}
\Phi_*:\left(\begin{matrix} a & b \\ c & d \end{matrix}\right) \longmapsto
\left(\begin{matrix} a^2 & -\sqrt{2}\, ab & -b^2 \\
-\sqrt{2}\,ac & ad+bc & \sqrt{2}\,bd \\
-c^2 & \sqrt{2}\, cd & d^2
\end{matrix}\right).
\end{equation}
It is not hard to check that $\Phi_*$ is a homomorphism whose kernel is $\{\pm I\}$
and whose image is contained in ${\rm SO}(J;{\mathbb K})$. In fact, it is not hard to check that
when ${\mathbb K}={\mathbb C}$ then $\Phi_*$ maps ${\rm SL}(2,{\mathbb C})$ onto
${\rm SO}(J;{\mathbb C})$ and when ${\mathbb K}={\mathbb R}$ then $\Phi_*$ maps ${\rm SL}(2,{\mathbb R})$ onto
the identity component ${\rm SO}_0(J;{\mathbb R})$ of ${\rm SO}(J;{\mathbb R})$. Note that since $J$ has signature $(2,1)$, 
this means $\Phi_*$ is a bijection between ${\rm SO}_0(2,1;{\mathbb R})$ and 
${\rm PSL}(2,{\mathbb R})={\rm SL}(2,{\mathbb R})/\{\pm I\}$.
When ${\mathbb K}={\mathbb C}$ the signature of $J$ is not well defined. 

\begin{lemma}\label{lem-eval-irred}
Let $\widehat{A}\in{\rm SL}(2,{\mathbb K})$ with eigenvalues $\lambda$ and $\lambda^{-1}$. 
Then the eigenvalues of $A=\Phi_*(\widehat{A})$ are $\lambda^2$, $1$ and $\lambda^{-2}$. 
In particular, ${\rm tr}(A)={\rm tr}^2(\widehat{A})-1$. Moreover,
if ${\bf u}\in{\mathbb K}^2$ is an eigenvector of $\widehat{A}$ with eigenvalue $\lambda$, 
then $\Phi({\bf u})$ is an eigenvector of $A$ with eigenvalue $\lambda^2$.
\end{lemma}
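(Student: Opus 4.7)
The plan is to exploit the defining relation $\Phi_*(\widehat{A})\Phi({\bf w})=\Phi(\widehat{A}{\bf w})$ together with the fact that the map $\Phi$ is homogeneous of degree two, that is $\Phi(\mu{\bf w})=\mu^2\Phi({\bf w})$ for any scalar $\mu\in{\mathbb K}$. This homogeneity is immediate from the coordinate formula for $\Phi$.

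First I would establish the eigenvector statement, which is essentially the key to the whole lemma. If ${\bf u}\in{\mathbb K}^2$ is a nonzero eigenvector of $\widehat{A}$ with eigenvalue $\lambda$, then
$$
\Phi_*(\widehat{A})\Phi({\bf u})=\Phi(\widehat{A}{\bf u})=\Phi(\lambda{\bf u})=\lambda^2\Phi({\bf u}).
$$
Moreover $\Phi({\bf u})\neq 0$ because its first or last coordinate is $-u_1^2$ or $u_2^2$, at least one of which is nonzero. Hence $\lambda^2$ is an eigenvalue of $A=\Phi_*(\widehat{A})$ with eigenvector $\Phi({\bf u})$, which is the final assertion of the lemma.

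Applying the same argument to an eigenvector ${\bf v}$ of $\widehat{A}$ with eigenvalue $\lambda^{-1}$, linearly independent from ${\bf u}$ (which is possible when $\lambda\neq\pm 1$, so that $\widehat{A}$ is diagonalisable), gives $\lambda^{-2}$ as a second eigenvalue of $A$. Since $\det(A)=1$, the third eigenvalue is forced to be $\lambda^2\cdot 1\cdot\lambda^{-2}=1$, which proves the eigenvalue claim. The trace identity then follows from the algebraic manipulation
$$
{\rm tr}(A)=\lambda^2+1+\lambda^{-2}=(\lambda+\lambda^{-1})^2-1={\rm tr}^2(\widehat{A})-1.
$$

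The only subtle case is when $\lambda=\pm 1$, where $\widehat{A}$ may be non-diagonalisable. The cleanest way to handle this is by continuity: both ${\rm tr}(A)$ and ${\rm tr}^2(\widehat{A})-1$ are polynomial in the entries of $\widehat{A}$, so the identity extends from the dense diagonalisable locus in ${\rm SL}(2,{\mathbb K})$. Alternatively, one can put $\widehat{A}$ in Jordan form and compute $\Phi_*(\widehat{A})$ explicitly from the matrix formula \eqref{eq-Phi*}, verifying directly that the three eigenvalues of $A$ are all equal to $1$. Either way, the bulk of the lemma is an immediate consequence of the homogeneity of $\Phi$ and the equivariance of $\Phi_*$, and no significant obstacle is expected.
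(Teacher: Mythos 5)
Your proof is correct, and its core computation --- $A\Phi({\bf u})=\Phi_*(\widehat{A})\Phi({\bf u})=\Phi(\widehat{A}{\bf u})=\Phi(\lambda{\bf u})=\lambda^{2}\Phi({\bf u})$, resting on the equivariance of $\Phi_*$ and the degree-two homogeneity of $\Phi$ --- is exactly the computation in the paper. Where you diverge is in producing the eigenvalue $1$: the paper first reads off from the explicit formula \eqref{eq-Phi*} that ${\rm tr}(A^{-1})={\rm tr}(A)$ and then invokes Lemma~\ref{lem-1-evalue}, so that $1$ is an eigenvalue of $A$ for \emph{every} $\widehat{A}$, with no diagonalisability hypothesis; the eigenvector argument then supplies $\lambda^{2}$, the last eigenvalue $\lambda^{-2}$ comes from $\det A=1$, and the trace identity follows. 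You instead run the eigenvector argument twice and use $\det A=1$ to force the third eigenvalue, which obliges you to treat $\lambda=\pm1$ separately; your density/Jordan-form patch does handle that case. Two small points to tighten in your route: when $\lambda^{2}=\lambda^{-2}$ but $\lambda\neq\pm1$ (i.e.\ $\lambda=\pm i$), the determinant step needs $\Phi({\bf u})$ and $\Phi({\bf v})$ to be linearly independent, so that the eigenvalue $\lambda^{2}$ has multiplicity two; this is true (the Veronese-type map $\Phi$ sends linearly independent vectors to linearly independent vectors, by a short computation), but it should be stated. Also, for ${\mathbb K}={\mathbb R}$ an elliptic $\widehat{A}$ has its eigenvectors only in ${\mathbb C}^{2}$, so the eigenvector computation should be read after complexifying --- a point the paper's own proof elides as well. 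In summary, the paper's use of Lemma~\ref{lem-1-evalue} buys uniformity (no case analysis) and ties into the section's running theme that ${\rm tr}(A)={\rm tr}(A^{-1})$ characterises the image of $\Phi_*$, while your argument is more self-contained, needing neither that lemma nor the observation ${\rm tr}(A^{-1})={\rm tr}(A)$.
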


\begin{proof}
It is not hard to see from the \eqref{eq-Phi*} that ${\rm tr}(A^{-1})={\rm tr}(A)$. From Lemma~\ref{lem-1-evalue} we see that
$A$ has $1$ as an eigenvalue. Now suppose 
${\bf u}\in{\mathbb K}^2$ is an eigenvector of $\widehat{A}$ with eigenvalue $\lambda$. 
Then
$$
A\Phi({\bf u})=\Phi_*(\widehat{A})\Phi({\bf u})=\Phi(\widehat{A}{\bf u})=\Phi(\lambda{\bf u})=\lambda^2\Phi({\bf u}).
$$
This gives the second part. Thus, the eigenvalues are $\lambda^2$, $1$ and $\lambda^{-2}$.
Therefore 
$$
{\rm tr}(A)=\lambda^2+1+\lambda^{-2}=(\lambda+\lambda^{-1})^2-1={\rm tr}^2(\widehat{A})-1.
$$
\end{proof}

\medskip
 
We now consider $\widehat{A}$, $\widehat{B}$ and $\widehat{C}$ in ${\rm SL}(2,{\mathbb K})$ with 
$\widehat{C}\widehat{B}\widehat{A}=I$ 
and the corresponding $A=\Phi_*(\widehat{A})$, $B=\Phi_*(\widehat{B})$, $C=\Phi_*(\widehat{C})$ in
${\rm SL}(3,{\mathbb C})$. Since $\Phi_*$ is a homomorphism we have $CBA=I$.  We know that 
$\langle \widehat{A},\, \widehat{B},\, \widehat{C} : \widehat{C}\widehat{B}\widehat{A}=I\rangle <{\rm SL}(2,{\mathbb K})$
is determined up to conjugation by ${\rm tr}(\widehat{A})$,  ${\rm tr}(\widehat{B})$ and ${\rm tr}(\widehat{C})$ using 
the theorem of Fricke and Vogt, Theorem~\ref{thm-fv}. Therefore, it is tempting to say that its image under $\Phi_*$, 
namely $\langle A,\,B,\,C : CBA=I\rangle$, is determined up to conjugation by ${\rm tr}(A)$, ${\rm tr}(B)$ and ${\rm tr}(C)$. 
However, this is not quite true. Consider $\widehat{A}_1$, $\widehat{B}_1$, $\widehat{C}_1$ in ${\rm SL}(2,{\mathbb K})$
with $\widehat{C}_1\widehat{B}_1\widehat{C}_1=I$ and ${\rm tr}(\widehat{A}_1)=-{\rm tr}(\widehat{A})$, 
${\rm tr}(\widehat{B}_1)=-{\rm tr}(\widehat{B})$, ${\rm tr}(\widehat{C}_1)=-{\rm tr}(\widehat{C})$ (such matrices exist by
the theorem of Fricke and Vogt). As we have already seen, 
${\rm tr}(\widehat{A}){\rm tr}(\widehat{B}){\rm tr}(\widehat{C})$ is independent of the choice of lift of $\widehat{A}$ and 
$\widehat{B}$ from ${\rm PSL}(2,{\mathbb K})$ to ${\rm SL}(2,{\mathbb K})$. Since
$$
{\rm tr}(\widehat{A}_1){\rm tr}(\widehat{B}_1){\rm tr}(\widehat{C}_1)
=-{\rm tr}(\widehat{A}){\rm tr}(\widehat{B}){\rm tr}(\widehat{C}),
$$
the two groups
$\langle \widehat{A},\widehat{B},\widehat{C} :\widehat{C}\widehat{B}\widehat{A}=I\rangle$ \
$\langle \widehat{A}_1,\widehat{B}_1,\widehat{C}_1 :\widehat{C}_1\widehat{B}_1\widehat{A}_1=I\rangle$ \
correspond to different subgroups of ${\rm PSL}(2,{\mathbb K})$. 

Write $A_1=\Phi_*(\widehat{A}_1)$, $B_1=\Phi_*(\widehat{B}_1)$, $C_1=\Phi_*(\widehat{CB}_1)$. Then 
$$
{\rm tr}(A_1)=\bigl({\rm tr}(\widehat{A}_1)\bigr)^2-1
=\bigl(-{\rm tr}(\widehat{A})\bigr)^2-1={\rm tr}(A),
$$
and similarly ${\rm tr}(B_1)={\rm tr}(B)$ and ${\rm tr}(C_1)={\rm tr}(C)$. However 
$\langle A,B,C : CBA=I\rangle$ and $\langle A_1,B_1,C_1 : C_1B_1A_1=I\rangle$ are not conjugate.
The ambiguity is captured by
looking at ${\rm tr}(A^{-1}B)$ and ${\rm tr}(A_1^{-1}B_1)$, or in a more invariant way by the shape invariants
$\sigma_+=\sigma_-={\rm tr}(A^{-1}B)-{\rm tr}(A^{-1}){\rm tr}(B)$ and 
$(\sigma_1)_+=(\sigma_1)_-={\rm tr}(A_1^{-!}B_1)-{\rm tr}(A_1^{-1}){\rm tr}(B_1)$ which are the two roots of the 
quadratic polynomial in the following lemma.

\begin{lemma}\label{lem-shape-irred}
Suppose that $A$, $B$ and $C$ are all in the image of $\Phi_*$ and satisfy $CBA=I$. Write 
$a={\rm tr}(A)$, $b={\rm tr}(B)$ and $c={\rm tr}(C)$. Let 
$\sigma_+$ and $\sigma_-$ be given by \eqref{eq:shape+} and  \eqref{eq:shape-}. Then $\sigma_+=\sigma_-$
is a  root of the polynomial
$$
X^2-2\bigl(a+b+c+1)X-4abc+a^2+b^2+c^2-2ab-2ac-2bc-2a-2b-2c-3.
$$
That is,
$$
X=a+b+c+1\pm 2\sqrt{(a+1)(b+1)(c+1)}.
$$
\end{lemma}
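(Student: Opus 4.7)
The plan is to compute $\sigma_+$ directly in terms of the ${\rm SL}(2,\mathbb{K})$ traces of the preimages $\widehat{A}$, $\widehat{B}$, $\widehat{C}$, then convert back using Lemma~\ref{lem-eval-irred}. Write $A=\Phi_*(\widehat{A})$, $B=\Phi_*(\widehat{B})$, $C=\Phi_*(\widehat{C})$ with $\widehat{A},\widehat{B},\widehat{C}\in{\rm SL}(2,\mathbb{K})$. Since $\ker\Phi_*=\{\pm I\}$ and $CBA=I$, the lifts satisfy $\widehat{C}\widehat{B}\widehat{A}=\pm I$. Set $\hat{a}={\rm tr}(\widehat{A})$, $\hat{b}={\rm tr}(\widehat{B})$, $\hat{c}={\rm tr}(\widehat{C})$; Lemma~\ref{lem-eval-irred} then gives $a=\hat{a}^2-1$, $b=\hat{b}^2-1$, $c=\hat{c}^2-1$. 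Note also that $\sigma_+=\sigma_-$ here, because elements in the image of $\Phi_*$ satisfy ${\rm tr}(X^{-1})={\rm tr}(X)$, so ${\rm tr}(A^{-1}B)={\rm tr}(B^{-1}A)$ and ${\rm tr}(A^{-1})={\rm tr}(A)$.

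Next, I would apply Fricke--Vogt, equation \eqref{eq-fv1}, to the pair $\widehat{A},\widehat{B}$. From $\widehat{C}\widehat{B}\widehat{A}=\pm I$ we get ${\rm tr}(\widehat{A}\widehat{B})={\rm tr}(\pm\widehat{C}^{-1})=\pm\hat{c}$, hence
$$
{\rm tr}(\widehat{A}^{-1}\widehat{B}) \;=\; \hat{a}\hat{b}-{\rm tr}(\widehat{A}\widehat{B}) \;=\; \hat{a}\hat{b}\mp\hat{c}.
$$
Since $A^{-1}B=\Phi_*(\widehat{A}^{-1}\widehat{B})$, Lemma~\ref{lem-eval-irred} yields
$$
{\rm tr}(A^{-1}B) \;=\; (\hat{a}\hat{b}\mp\hat{c})^2-1 \;=\; (a+1)(b+1)+c\,\mp\,2\hat{a}\hat{b}\hat{c}.
$$
Subtracting ${\rm tr}(A^{-1}){\rm tr}(B)=ab$ gives the clean formula
$$
\sigma_+ \;=\; a+b+c+1\,\mp\,2\hat{a}\hat{b}\hat{c}.
$$

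Finally, to eliminate the auxiliary quantity $\hat{a}\hat{b}\hat{c}$, I would square the relation $\sigma_+-(a+b+c+1)=\mp 2\hat{a}\hat{b}\hat{c}$ and use $(\hat{a}\hat{b}\hat{c})^2=(a+1)(b+1)(c+1)$ to obtain
$$
\bigl(\sigma_+-(a+b+c+1)\bigr)^2 \;=\; 4(a+1)(b+1)(c+1).
$$
Expanding the left side and the product $4(a+1)(b+1)(c+1)$ and gathering terms yields exactly the quadratic displayed in the lemma. The explicit root formula $X=a+b+c+1\pm 2\sqrt{(a+1)(b+1)(c+1)}$ then follows from the quadratic formula, since the discriminant simplifies to $16(a+1)(b+1)(c+1)$.

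There is no real obstacle here: the argument is a direct translation of the Fricke--Vogt identity through the covering $\Phi_*$. The only subtlety is keeping track of the two independent sign choices (the lift $\widehat{C}\widehat{B}\widehat{A}=\pm I$, and the square roots $\hat{a}=\pm\sqrt{a+1}$ etc.), but these are precisely what correspond to the $\pm$ in the final root formula, and they disappear on squaring. Alternatively, one could invoke Theorem~\ref{thm:branch-fact}(2) to \emph{identify} the quadratic in advance and then only need to rule out the reducible case (1) by exhibiting $\sigma_+$ as the stated root of $T_2$, but the direct computation above is cleaner.
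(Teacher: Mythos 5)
Your proof is correct and follows essentially the same route as the paper: compute ${\rm tr}(A^{-1}B)$ through the ${\rm SL}(2,\mathbb{K})$ lifts using Lemma~\ref{lem-eval-irred} and the Fricke--Vogt identity \eqref{eq-fv1}, arrive at $\sigma_\pm = a+b+c+1 \mp 2\,{\rm tr}(\widehat{A}){\rm tr}(\widehat{B}){\rm tr}(\widehat{A}\widehat{B})$, and square to eliminate the ${\rm SL}(2)$ traces. The only cosmetic difference is that you carry the lift ambiguity $\widehat{C}\widehat{B}\widehat{A}=\pm I$ explicitly, whereas the paper works directly with ${\rm tr}(\widehat{A}\widehat{B})$ and never needs a lift of $C$; both signs disappear on squaring, as you note.
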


\begin{proof}
Writing $\widehat{A}$, $\widehat{B}$ for matrices that are sent to $A$ and $B$ by $\Phi_*$, we have
\begin{eqnarray*}
{\rm tr}(A^{-1}B) & = & {\rm tr}^2(\widehat{A}^{-1}\widehat{B})-1 \\
& = & \Bigl({\rm tr}(\widehat{A}\widehat{B})-{\rm tr}(\widehat{A}){\rm tr}(\widehat{B})\Bigr)^2-1 \\
& = & \bigl({\rm tr}(AB)+1)-2{\rm tr}(\widehat{A}){\rm tr}(\widehat{B}){\rm tr}(\widehat{A}\widehat{B})
+\bigl({\rm tr}(A)+1\bigr)\bigl({\rm tr}(B)+1\bigr) -1 \\
& = & {\rm tr}(A){\rm tr}(B) +{\rm tr}(A)+{\rm tr}(B)+{\rm tr}(C)+1
-2{\rm tr}(\widehat{A}){\rm tr}(\widehat{B}){\rm tr}(\widehat{A}\widehat{B}).
\end{eqnarray*}
Thus
\begin{equation}\label{eq-shape-irred}
\sigma_+=\sigma_-={\rm tr}(A)+{\rm tr}(B)+{\rm tr}(C)+1
-2{\rm tr}(\widehat{A}){\rm tr}(\widehat{B}){\rm tr}(\widehat{A}\widehat{B}).
\end{equation}
Therefore
\begin{eqnarray*}
\lefteqn{
\bigl(\sigma_\pm -{\rm tr}(A)-{\rm tr}(B)-{\rm tr}(C)-1\bigr)^2 } \\
& = & 4{\rm tr}^2(\widehat{A}){\rm tr}^2(\widehat{B}){\rm tr}^2(\widehat{A}\widehat{B}) \\
& = & 4\bigl({\rm tr}(A)+1\bigr)\bigl({\rm tr}(B)+1\bigr)\bigl({\rm tr}(C)+1\bigr).
\end{eqnarray*}
The result follows by rearranging this expression.
\end{proof}

\medskip

\begin{corollary}\label{cor-irred}
Suppose that $A,B,C\in{\rm SL}(3,{\mathbb C})$ with $CBA=I$. Suppose that 
$$
\begin{array}{lll}
{\rm tr}(A)={\rm tr}(A^{-1}),\quad &{\rm tr}(B)={\rm tr}(B^{-1}),\quad &{\rm tr}(C)={\rm tr}(C^{-1}),\\
\sigma_+(A,B,C)=\sigma_-(A,B,C),\quad &{\rm tr}[A,B]={\rm tr}\bigl([A,B]^{-1}\bigr). &
\end{array}
$$
where $\sigma_+,\sigma_-$ are given by \eqref{eq:shape+} and \eqref{eq:shape-}. 
Then either $\langle A,B,C:CBA=I\rangle$ is reducible or else, up to conjugacy, it is in the image of the map 
$\Phi_*$ from \eqref{eq-Phi*}.
\end{corollary}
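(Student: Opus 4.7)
The plan is to combine Theorem~\ref{thm:branch-fact}, Theorem~\ref{thm-red} and Lemma~\ref{lem-shape-irred} in a case analysis, then invoke part~(3) of Lawton's theorem (Theorem~\ref{thm-lawton-new}) to rigidify the conjugacy class from the eight trace/shape invariants.

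Set $a={\rm tr}(A)$, $b={\rm tr}(B)$, $c={\rm tr}(C)$, and note that since ${\rm tr}[A,B]={\rm tr}([A,B]^{-1})={\rm tr}[B,A]$, the polynomial $Q$ of \eqref{eqn:Q-y} for the group $\langle A,B,C\rangle$ has a repeated root. Together with the other trace identities, Theorem~\ref{thm:branch-fact} then splits the problem into two cases: either $\sigma_+=\sigma_-=3-a-b-c$, or $\sigma_+=\sigma_-$ is a root of $T_2(t)$. In the first case Theorem~\ref{thm-red} applies directly and the group is reducible, which is the first alternative in the conclusion.

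For the second case I want to build an auxiliary representation in the image of $\Phi_*$ whose eight traces (in the ${\bf y}$-coordinates of \eqref{eq:8-traces-y}) agree with those of $(A,B,C)$. By the theorem of Fricke and Vogt (Theorem~\ref{thm-fv}) I can produce $\widehat A,\widehat B\in{\rm SL}(2,\mathbb{C})$ with prescribed traces ${\rm tr}(\widehat A)=\alpha$, ${\rm tr}(\widehat B)=\beta$, ${\rm tr}(\widehat A\widehat B)=\gamma$, and then set $\widehat C=\widehat B^{-1}\widehat A^{-1}$, so that ${\rm tr}(\widehat C)=\gamma$. I will choose $\alpha^2=a+1$, $\beta^2=b+1$, $\gamma^2=c+1$ with the signs of the square roots arranged so that
$$
2\alpha\beta\gamma=a+b+c+1-\sigma_+,
$$
which is possible precisely because $\sigma_+$ satisfies $T_2(\sigma_+)=0$, equivalently $\bigl(a+b+c+1-\sigma_+\bigr)^2=4(a+1)(b+1)(c+1)$. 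Setting $A_0=\Phi_*(\widehat A)$, $B_0=\Phi_*(\widehat B)$, $C_0=\Phi_*(\widehat C)$, Lemma~\ref{lem-eval-irred} gives ${\rm tr}(A_0)={\rm tr}(A_0^{-1})=a$, and similarly for $B_0,C_0$, while Lemma~\ref{lem-shape-irred} together with \eqref{eq-shape-irred} ensures $\sigma_+(A_0,B_0,C_0)=\sigma_-(A_0,B_0,C_0)=\sigma_+$. Thus the full ${\bf y}$-coordinates of the two triples agree.

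To finish, observe that both $Q$ polynomials have repeated roots, so the choice of root of \eqref{eqn:Q-y} is the same for the two triples automatically. By Theorem~\ref{thm-lawton-new}(3) together with the remark following Theorem~\ref{thm-lawton}, if either of $\langle A,B,C\rangle$ or $\langle A_0,B_0,C_0\rangle$ is irreducible then both are and they are conjugate in ${\rm SL}(3,\mathbb{C})$; in that case $\langle A,B,C\rangle$ is conjugate to a subgroup of the image of $\Phi_*$, giving the second alternative. Otherwise $\langle A,B,C\rangle$ is reducible, giving the first alternative.

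The only slightly delicate step is the sign-matching in constructing $\widehat A,\widehat B,\widehat C$: one must check that the two roots of $T_2$ correspond exactly to the two choices of an overall sign in $\alpha\beta\gamma$, which is a direct consequence of $T_2(t)=(t-a-b-c-1)^2-4(a+1)(b+1)(c+1)$. Once this identification is in hand the rest is a straightforward invocation of Lawton's theorem.
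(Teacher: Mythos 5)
Your proposal is correct and follows essentially the same route as the paper: split into the two cases of Theorem~\ref{thm:branch-fact}, dispose of the first case by Theorem~\ref{thm-red}, realise the second case by ${\rm SL}(2,{\mathbb C})$ matrices pushed through $\Phi_*$, and conclude with Lawton's theorem (part (3)) together with the remark on reducible groups. The only differences are cosmetic: you spell out the sign-matching $2\alpha\beta\gamma=a+b+c+1-\sigma_+$ via the factorisation $T_2(t)=(t-a-b-c-1)^2-4(a+1)(b+1)(c+1)$, which the paper leaves implicit in Lemma~\ref{lem-shape-irred}, and your $\widehat{C}$ should be $\widehat{A}^{-1}\widehat{B}^{-1}$ rather than $\widehat{B}^{-1}\widehat{A}^{-1}$ to satisfy $\widehat{C}\widehat{B}\widehat{A}=I$ (the traces, and hence the argument, are unaffected).
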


\begin{proof}
From Theorem \ref{thm:branch-fact} either the traces of $A$, $B$, $C$ and the shape invariants satisfy 
$\sigma_+=\sigma_- = 3-{\rm tr}(A)-{\rm tr}(B)-{\rm tr}(C)$, in which case $\langle A,B,C:CBA=I\rangle$ is reducible by 
Theorem \ref{thm-red}, or else they satisfy 
\begin{eqnarray*}
0 & = & t^2-2(a+b+c+1)t \\
&& \quad -4abc+a^2+b^2+c^2-2ab-2bc-2ac-2a-2b-2c-3
\end{eqnarray*}
where $a={\rm tr}(A)$, $b={\rm tr}(B)$, $c={\rm tr}(C)$ and $t=\sigma_+(A,B,C)$. In this case there are
matrices $\widehat{A}$, $\widehat{B}$ and $\widehat{C}$ in ${\rm SL}(2,{\mathbb C})$ whose images 
under $\Phi_*$ have the desired traces. Providing this representation is irreducible, it is determined by these
traces up to conjugation, see Theorem~\ref{thm-lawton} (3). This gives the result.
\end{proof}

\medskip

In our application to three holed spheres, there is a consistent choice of root of the equation 
from Lemma~\ref{lem-shape-irred}. Let $Y$ be a a three holed sphere with boundary curves 
$\alpha$, $\beta$ and $\gamma$. We are interested in Fuchsian representations of $\pi_1(Y)$ in the case where
${\mathbb K}={\mathbb R}$ and quasi-Fuchsian representations in the case where ${\mathbb K}={\mathbb C}$. In the first
case, these are representations $\rho:\pi_1(Y)\longmapsto \Gamma$, where $\Gamma$ is a subgroup of 
${\rm Isom}_+({\bf H}^2_{\mathbb R})$, the orientation preserving isometries
of the hyperbolic plane, with the property that ${\bf H}^2_{\mathbb R}/\Gamma$ is homeomorphic to $Y$.
Necessarily this means that $\alpha$, $\beta$ and $\gamma$ are represented by hyperbolic (loxodromic) maps.

\begin{proposition}\label{prop-shape-fuchsian}
Let $Y$ be a three holed sphere with boundary curves $\alpha$, $\beta$, $\gamma$ and let 
$\rho:\pi_1(Y)\longrightarrow \Gamma<{\rm SO}_0(2,1)$ be a Fuchsian representation of its fundamental group. 
Let $A=\rho([\alpha])$, $B=\rho([\beta])$ and $C=\rho([\gamma])$. Then the shape invariants $\sigma_{\pm}$ of 
$\Gamma$ and the trace of $[A,B]$ are given by
\begin{eqnarray*}
\sigma_+ & = & \sigma_-={\rm tr}(A)+{\rm tr}(B)+{\rm tr}(C)+1
+2\sqrt{\bigl({\rm tr}(A)+1\bigr)\bigl({\rm tr}(B)+1\bigr)\bigl({\rm tr}(C)+1\bigr)}, \\
{\rm tr}[A,B] & = & \Bigl({\rm tr}(A)+{\rm tr}(B)+{\rm tr}(C)+1+
\sqrt{\bigl({\rm tr}(A)+1\bigr)\bigl({\rm tr}(B)+1\bigr)\bigl({\rm tr}(C)+1\bigr)}\Bigr)^2-1
\end{eqnarray*}
where we take the positive square root.
\end{proposition}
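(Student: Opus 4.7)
The plan is to lift the Fuchsian representation $\rho$ through the double cover $\Phi_*$ and then to apply the identities already established in the excerpt, with the sign of the square root pinned down by the Gilman--Maskit inequality. Since $\rho$ factors through ${\rm SO}_0(2,1) = \Phi_*\bigl({\rm SL}(2,{\mathbb R})\bigr)/\{\pm I\}$, I can choose lifts $\widehat{A},\widehat{B},\widehat{C}\in{\rm SL}(2,{\mathbb R})$ with $\widehat{C}\widehat{B}\widehat{A}=I$ and $\Phi_*(\widehat{A})=A$, etc. Because $\rho$ represents $Y$ as a hyperbolic pair of pants with geodesic boundary, the three lifts are hyperbolic elements whose axes in ${\bf H}^2$ are pairwise disjoint and bound a common region; Proposition~\ref{prop-gm} then gives the key sign
\[
{\rm tr}(\widehat{A})\,{\rm tr}(\widehat{B})\,{\rm tr}(\widehat{C})<0.
\]

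Next, I would combine Lemma~\ref{lem-eval-irred} and the already derived formula \eqref{eq-shape-irred}. Writing $a={\rm tr}(A)$, $b={\rm tr}(B)$, $c={\rm tr}(C)$, the lemma gives ${\rm tr}^2(\widehat{A})=a+1$, and similarly for $\widehat{B}$ and $\widehat{C}$; also $\widehat{A}\widehat{B}=\widehat{C}^{-1}$ so ${\rm tr}(\widehat{A}\widehat{B})={\rm tr}(\widehat{C})$. Hence
\[
\bigl({\rm tr}(\widehat{A}){\rm tr}(\widehat{B}){\rm tr}(\widehat{A}\widehat{B})\bigr)^2=(a+1)(b+1)(c+1),
\]
and by the inequality above this product equals $-\sqrt{(a+1)(b+1)(c+1)}$ with the principal (positive) square root. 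Substituting into \eqref{eq-shape-irred} picks out the $+$ branch of Lemma~\ref{lem-shape-irred} and yields the claimed formula for $\sigma_+=\sigma_-$.

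For the commutator trace, I would apply the Fricke--Vogt identity \eqref{eq-fv2} to $\widehat{A}$, $\widehat{B}$ in ${\rm SL}(2,{\mathbb C})$ to obtain
\[
{\rm tr}[\widehat{A},\widehat{B}] = {\rm tr}^2(\widehat{A})+{\rm tr}^2(\widehat{B})+{\rm tr}^2(\widehat{C})-2-{\rm tr}(\widehat{A}){\rm tr}(\widehat{B}){\rm tr}(\widehat{C}),
\]
which simplifies under the substitution ${\rm tr}^2(\widehat{A})=a+1$ and the sign computation above to
\[
{\rm tr}[\widehat{A},\widehat{B}] = a+b+c+1+\sqrt{(a+1)(b+1)(c+1)}.
\]
Since $[A,B]=\Phi_*\bigl([\widehat{A},\widehat{B}]\bigr)$, Lemma~\ref{lem-eval-irred} gives ${\rm tr}[A,B]={\rm tr}^2[\widehat{A},\widehat{B}]-1$, which is exactly the stated square minus one.

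The only real obstacle is the sign assertion "positive square root", and this is entirely handled by the Gilman--Maskit inequality applied to the lifts in ${\rm SL}(2,{\mathbb R})$; everything else is algebraic substitution using Lemma~\ref{lem-eval-irred}, equation~\eqref{eq-shape-irred}, and the Fricke--Vogt identity. A minor subtlety is the ambiguity in the lift $\widehat{A},\widehat{B},\widehat{C}$ up to an overall sign (the choice of lifts from ${\rm PSL}(2,{\mathbb R})$), but the sign of the triple product is independent of this choice, so the argument goes through unchanged.
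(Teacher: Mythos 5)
Your proposal is correct and follows essentially the same route as the paper: lift to ${\rm SL}(2,{\mathbb R})$ via $\Phi_*$, fix the sign of ${\rm tr}(\widehat{A}){\rm tr}(\widehat{B}){\rm tr}(\widehat{A}\widehat{B})$ by the Gilman--Maskit inequality, and substitute into \eqref{eq-shape-irred}. Your explicit treatment of ${\rm tr}[A,B]$ via the Fricke--Vogt identity \eqref{eq-fv2} and Lemma~\ref{lem-eval-irred} simply spells out a step the paper leaves implicit, so there is nothing to correct.
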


\begin{proof}
By construction, there exist $\widehat{A},\,\widehat{B}\in{\rm SL}(2,{\mathbb R})$ so that 
$A=\Psi_*(\widehat{A})$, $B=\Psi_*(\widehat{B})$, $C=(BA)^{-1}=\Psi_*(\widehat{A}^{-1}\widehat{B}^{-1})$, 
Hence
$$
{\rm tr}(A)+1={\rm tr}^2(\widehat{A}),\quad {\rm tr}(B)+1={\rm tr}^2(\widehat{B}),\quad 
{\rm tr}(C)+1={\rm tr}^2(\widehat{A}\widehat{B}).
$$
Using Proposition~\ref{prop-gm} we have
$$
{\rm tr}(\widehat{A}){\rm tr}(\widehat{B}){\rm tr}(\widehat{A}\widehat{B})<0.
$$
Therefore, taking the positive square root, we have
$$
{\rm tr}(\widehat{A}){\rm tr}(\widehat{B}){\rm tr}(\widehat{A}\widehat{B})
=-\sqrt{\bigl({\rm tr}(A)+1\bigr)\bigl({\rm tr}(B)+1\bigr)\bigl({\rm tr}(C)+1\bigr)}.
$$
We obtain the result by substituting this into equation \eqref{eq-shape-irred}.
\end{proof}

\medskip

The space of quasi-Fuchsian representations of $\pi_1(Y)$ is a connected set that contains the Fuchsian representations
and on which $A$, $B$ and $C$ are always loxodromic. A consequence of the latter condition is that for all
quasi-Fuchsian representations ${\rm tr}(A)\neq -1$, ${\rm tr}(B)\neq -1$ and ${\rm tr}(C)\neq -1$,  Thus, on the
space of quasi-Fuchsian representations there
is a well defined branch of $\sqrt{\bigl({\rm tr}(A)+1\bigr)\bigl({\rm tr}(B)+1\bigr)\bigl({\rm tr}(C)+1\bigr)}$ that
agrees with the positive square root when the three traces are real and positive. This branch is obtained
by analytic continuation along paths of quasi-Fuchsian representations.

\begin{corollary}\label{cor-shape-quasiF}
Let $Y$ be a three holed sphere with boundary curves $\alpha$, $\beta$, $\gamma$ and let 
$\rho:\pi_1(Y)\longrightarrow \Gamma<{\rm SO}(3;{\mathbb C})$ be a quasi-Fuchsian representation of its fundamental group. 
Let $A=\rho([\alpha])$, $B=\rho([\beta])$ and $C=\rho([\gamma])$. Then the shape invariants $\sigma_\pm$ of $\Gamma$ 
and the trace of $[A,B]$ are given by
\begin{eqnarray*}
\sigma_+ & = & \sigma_-={\rm tr}(A)+{\rm tr}(B)+{\rm tr}(C)+1
+2\sqrt{\bigl({\rm tr}(A)+1\bigr)\bigl({\rm tr}(B)+1\bigr)\bigl({\rm tr}(C)+1\bigr)}, \\
{\rm tr}[A,B] & = & \Bigl({\rm tr}(A)+{\rm tr}(B)+{\rm tr}(C)+1+
\sqrt{\bigl({\rm tr}(A)+1\bigr)\bigl({\rm tr}(B)+1\bigr)\bigl({\rm tr}(C)+1\bigr)}\Bigr)^2-1
\end{eqnarray*}
where the square root is a well defined branch that agrees with the positive square root when all three traces are
real and positive.
\end{corollary}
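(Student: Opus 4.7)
The plan is to deduce the corollary from Proposition \ref{prop-shape-fuchsian} by analytic continuation, using the fact that the discriminant $({\rm tr}(A)+1)({\rm tr}(B)+1)({\rm tr}(C)+1)$ never vanishes on the quasi-Fuchsian space.

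First, I note that on the whole deformation space of representations $\rho:\pi_1(Y)\to{\rm SO}(3;\mathbb{C})$, the three hypotheses of Lemma~\ref{lem-shape-irred} are automatically satisfied (all elements of ${\rm SO}(3;\mathbb{C})$ lie in the image of $\Phi_*$ and hence have ${\rm tr}(A^{-1})={\rm tr}(A)$, and $\sigma_+=\sigma_-$ by Theorem~\ref{thm-pants}(1) combined with the ${\rm tr}[A,B]={\rm tr}[B,A]$ condition). Thus the quadratic of Lemma~\ref{lem-shape-irred} holds, which I rewrite as
$$
R(\rho) \;:=\; \tfrac{1}{2}\bigl(\sigma_+(\rho)-{\rm tr}(A)-{\rm tr}(B)-{\rm tr}(C)-1\bigr)
$$
being a square root of $({\rm tr}(A)+1)({\rm tr}(B)+1)({\rm tr}(C)+1)$. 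Note that $R$ is manifestly a holomorphic (in fact, polynomial) function of the representation and, in particular, is single-valued on the quasi-Fuchsian space.

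Second, I show that on the quasi-Fuchsian space the argument of the square root is nowhere zero. For a quasi-Fuchsian representation, the three boundary elements $A,B,C$ are loxodromic, so their preimages $\widehat{A},\widehat{B},\widehat{C}\in{\rm SL}(2,\mathbb{C})$ under $\Phi_*$ are loxodromic and therefore satisfy ${\rm tr}(\widehat{A}),{\rm tr}(\widehat{B}),{\rm tr}(\widehat{C})\neq 0$. By Lemma~\ref{lem-eval-irred}, this gives ${\rm tr}(A)+1={\rm tr}^2(\widehat{A})\neq 0$ and similarly for $B$ and $C$. Hence $R(\rho)^2$ is a nowhere-vanishing holomorphic function on quasi-Fuchsian space, and so is $R(\rho)$ itself.

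Third, I appeal to connectedness of the quasi-Fuchsian space of $\pi_1(Y)$: it is a connected complex manifold containing the Fuchsian slice (this is part of the definition adopted in the text, and it passes from ${\rm SL}(2,\mathbb{C})$ down to ${\rm SO}(3;\mathbb{C})$ via $\Phi_*$). On the Fuchsian slice the three traces are real and exceed $3$, so $R^2>0$, and by Proposition~\ref{prop-shape-fuchsian} the value of $R$ equals the positive real square root. Since $R$ is a nowhere-zero holomorphic function on the connected quasi-Fuchsian space that agrees with the positive square root on the Fuchsian locus, $R$ is by construction the unique holomorphic branch of $\sqrt{({\rm tr}(A)+1)({\rm tr}(B)+1)({\rm tr}(C)+1)}$ obtained by analytic continuation from the Fuchsian slice. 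This yields the first displayed formula. The second displayed formula then follows by substituting this value of $\sigma_+$ into Theorem~\ref{thm:branch-fact}(2) and completing the square, exactly as in the proof of Proposition~\ref{prop-shape-fuchsian}.

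The main technical point (and the only real obstacle) is the non-vanishing of the discriminant on the quasi-Fuchsian locus, which is what allows the analytic continuation to produce a single-valued holomorphic branch rather than a genuinely multi-valued function; once that is established, connectedness of the quasi-Fuchsian space together with the Fuchsian base case pin down the sign uniquely.
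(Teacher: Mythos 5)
Your proposal is correct and follows essentially the same route as the paper, whose justification for this corollary is exactly the paragraph preceding it: Lemma~\ref{lem-shape-irred} gives the quadratic, loxodromy of the boundary elements gives ${\rm tr}(A),{\rm tr}(B),{\rm tr}(C)\neq-1$ so the discriminant never vanishes, and connectedness of the quasi-Fuchsian space together with the Fuchsian base case of Proposition~\ref{prop-shape-fuchsian} pins down the branch by analytic continuation. Your extra observation that $R=\tfrac12\bigl(\sigma_+-{\rm tr}(A)-{\rm tr}(B)-{\rm tr}(C)-1\bigr)$ is itself a globally defined polynomial function squaring to the discriminant is a nice way to see the branch is single-valued (the loose appeal to Theorem~\ref{thm-pants}(1) for $\sigma_+=\sigma_-$ is unnecessary, since this is already part of Lemma~\ref{lem-shape-irred} for groups in the image of $\Phi_*$), but it does not change the substance of the argument.
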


We now consider the twist-bend-bulge-turn parameters associated to the loxodromic map $A$. As before, we assume that 
${\bf v}_+(A),\,{\bf v}_0(A),\,{\bf v}_-(A)$ are the standard basis vectors, and so
$A$ is a diagonal matrix
$$
	A=\left(\begin{matrix} \lambda & 0 & 0 \\ 0 & 1 & 0 \\ 0 & 0 & \lambda^{-1}		
	\end{matrix}\right)
$$
where $\lambda\in{\mathbb K}$ has $|\lambda|>1$. 
We then consider $K\in{\rm SO}(J;{\mathbb K})$ in the centraliser $Z(A)$ of $A$. This has the form 
$$
K=\left(\begin{matrix} \kappa_1 & 0 & 0 \\ 0 & \kappa_2 & 0 \\ 0 & 0 & \kappa_3 \end{matrix}\right).
$$
Since $K$ is in the image of $\Phi_*$, we see that $\kappa_2=1$ and $\kappa_3=\kappa_1^{-1}$. Thus
$K=T^u$ for some $u\in{\mathbb K}$. Hence the bulge and the turn are both zero.

Summarising, when ${\mathbb K}={\mathbb R}$ a representation of $\pi_1(S_g)$ 
in ${\mathcal T}(S_g)$
is determined by the following parameters
\begin{enumerate}
\item[(1)] $3g-3$ real trace parameters ${\rm tr}(A_1),\,\ldots,\,{\rm tr}(A_{3g-3})$,
\item[(2)] $3g-3$ real twist parameters $t_\Gamma(\gamma_1),\,\ldots,\,t_\Gamma(\gamma_{3g-3})$.
\end{enumerate}
Moreover, the shape invariants are determined by the trace parameters using equation \eqref{eq:shape-SL} and the 
commutator polynomials $Q(Y_1),\,\ldots,\,Q(Y_{2g-2})$ all have repeated roots. Also the real bend parameters
$\theta_\Gamma(\gamma_1),\ldots,\,\theta_\Gamma(\gamma_{3g-3})$ and the complex bulge-turn 
turn parameters $(s+i\phi)_\Gamma(\gamma_1),\ldots,\,(s+i\phi)_\Gamma(\gamma_{3g-3})$ are all zero. 
This proves Theorem~\ref{thm:main-SO(2,1)}.

Likewise, when ${\mathbb K}={\mathbb C}$, a representation $\pi_1(S_g)$ 
in ${\mathcal Q}(S,{\mathcal L},{\rm SO}(3;{\mathbb C})$ is determined by the following parameters
\begin{enumerate}
\item[(1)] $3g-3$ complex trace parameters ${\rm tr}(A_1),\,\ldots,\,{\rm tr}(A_{3g-3})$,
\item[(2)] $3g-3$ complex twist-bend parameters $(t+i\theta)_\Gamma(\gamma_1),\,\ldots,\,(t+i\theta)_\Gamma(\gamma_{3g-3})$.
\end{enumerate}
Moreover, the shape invariants are determined by the trace parameters using equation \eqref{eq:shape-SL} and the 
commutator polynomials $Q(Y_1),\,\ldots,\,Q(Y_{2g-2})$ all have repeated roots. 
Finally, the complex bulge-turn turn parameters $(s+i\phi)_\Gamma(\gamma_1),\ldots,\,(s+i\phi)_\Gamma(\gamma_{3g-3})$ 
are all zero. 
This proves Theorem~\ref{thm:main-SO(3,C)}.

Finally, consider an irreducible subgroup of $\Gamma=\langle A,B,C : CBA=I\rangle$ where the traces $A$, $B$ and $C$
all equal the traces of their respective inverses and also $\sigma_+=\sigma_-$, so ${\rm tr}(A^{-1}B)={\rm tr}(B^{-1}A)$
and where $Q$ has repeated roots, so ${\rm tr}[A,B]={\rm tr}([A,B]^{-1})$. Using Theorem~\ref{thm:branch-fact} we see that
either $\sigma_+=3-{\rm tr}(A)-{\rm tr}(B)-{\rm tr}(C)$ or else it is a root of a particular quadratic polynomial 
$T_2$. Since the group is assumed to be irreducible, the former cannot happen, using Theorem~\ref{thm-red}. 
Hence the traces must satisfy $T_2$. Using Lawton's theorem, $\Gamma$ is uniquely determined up to conjugation by 
these traces and by Lemma~\ref{lem-shape-irred} there is a representation in the image of $\Phi_*$ with
the same values for the traces. Hence $\Gamma$ is a subgroup of ${\rm SO}(3;{\mathbb C})$. This proves
Theorem~\ref{thm-pants} (1).

\section{${\rm SL}(3,\mathbb{R})$-coordinates}\label{sec:SL(3,R)}

In this section we consider totally loxodromic representations of $\pi_1(S_g)$ to ${\rm SL}(3,{\mathbb R})$. 
We are interested in those 
representations that can be connected to the Fuchsian representations, that is those whose image lies in
${\rm SO}_0(2,1)$, through a continuous path of
representations. Choi and Goldman \cite{Choi-Gol} showed that the component of ${\rm SL}(3,{\mathbb R})$ 
representations containing the Fuchsian representations corresponds to the space of convex real projective 
structures on $S_g$. This component is called the Hitchin component. Since any loxodromic element of
of ${\rm SO}_0(2,1)$ has positive eigenvalues, each non-trivial element of $\pi_1(S_g)$ will be represented by
a loxodromic map with positive eigenvalues. In \cite{Gol}, Goldman defined Fenchel-Nielsen coordinates
for such representations. His parameters are boundary parameters, internal parameters and twist-bulge parameters.
These correspond to our trace parameters, shape invariants and twist-bulge parameters respectively. Goldman's internal
parameters were not symmetric under cyclic permutation of the boundary curves of each three-holed sphere, but later
Zhang \cite{Zhang2} showed how to symmetrise them. We will use Zhang's parameters. 

\subsection{Fenchel-Nielsen coordinates for ${\rm SL}(3,{\mathbb R})$}

It is clear that for representations to ${\rm SL}(3,{\mathbb R})$ the trace parameters
${\rm tr}(A_j^{\pm 1})$ and the shape invariants $\sigma_\pm(Y_k)$ are all real. Using Lawton's theorem,
we see that $\rho\bigl(\pi_1(Y_k)\bigr)$ is determined by these parameters together with a choice of
root of the commutator quadratic $Q(Y_k)$. 

Now consider the twist-bend-bulge-turn parameters. Recall from Section~\ref{sec:alg-FN} that if $A$ is loxodromic
then an element $K$ of the centraliser $Z(A)$ of $A=\rho([\alpha])$ can be written as $T^uU^v$ and has eigenvalues 
$e^{u-v}$, $e^{2v}$ and $e^{-u-v}$. For representations to ${\rm SL}(3,{\mathbb R})$ these all need to be real.
Hence the bend $\theta_\Gamma(\alpha)={\rm Im}(u)$ and the turn $\phi_\Gamma(\alpha)={\rm Im}(v)$ are zero. 
We are left with the twist and bulge parameters
$t_\Gamma(\alpha)={\rm Re}(u)$ and $s_\Gamma(\alpha)={\rm Re}(v)$, see Section~5.5 of Goldman \cite{Gol}. 
Note that in the next section we will use $s$ and $t$ to denote Goldman's internal parameters rather than the bulge and twist. \
The context will make this clear.

Thus we have proved that $\rho:\bigl(\pi_1(S_g)\bigr)\longrightarrow {\rm SL}(3,{\mathbb R})$ is determined by
\begin{enumerate}
\item[(1)] $6g-6$ real trace parameters ${\rm tr}(A_1),\,\ldots,\,{\rm tr}(A_{3g-3})$ and 
${\rm tr}(A_1^{-1}),\,\ldots,\,{\rm tr}(A_{3g-3}^{-1})$;
\item[(2)] $4g-4$ real shape invariants $\sigma_+(Y_1),\,\ldots,\,\sigma_+(Y_{2g-2})$ and 
$\sigma_-(Y_1),\,\ldots,\,\sigma_-(Y_{2g-2})$;
\item[(3)] a choice of root for each of the $2g-2$ polynomials $Q(Y_1),\,\ldots,\, Q(Y_{2g-2})$;
\item[(4)] $3g-3$ real twist parameters $t_\Gamma(\gamma_1),\,\ldots,\, t_\Gamma(\gamma_{3g-3})$ and 
$3g-3$ real bulge parameters $s_\Gamma(\gamma_1),\,\ldots,\, s_\Gamma(\gamma_{3g-3})$.
\end{enumerate}
This proves Theorem~\ref{thm:main-SL(3,R)}. 

Moreover, consider a subgroup $\langle A,B,C:CBA=I\rangle$ of ${\rm SL}(3,{\mathbb C})$ where 
${\rm tr}(A^{\pm 1})$, ${\rm tr}(B^{\pm 1})$, ${\rm tr}(C^{\pm 1})$, and $\sigma_\pm$ are all real. 
Using Lawton's theorem all traces in this group are determined by real polynomial functions of these traces, and so 
must themselves be real. Hence, using Acosta's theorem we see that the group is conjugate to a subgroup
of ${\rm SL}(3,{\mathbb R})$. This proves Theorem~\ref{thm-pants} (2).

\subsection{Goldman-Zhang coordinates}

Now we relate our method of parameterising loxodromic maps with Goldman's. This relates our trace parameters and
shape invariants to Goldman's boundary and internal parameters. (Our twist and bulge parameters agree with his.)

Suppose that $A\in{\rm SL}(3,{\mathbb R})$ is loxodromic with (real) eigenvalues $\lambda_A$, $\mu_A$, $\nu_A$ satisfying
$0<\lambda_A<\mu_A<\nu_A$. Note this implies $0<\lambda_A<1$. Goldman defines $\tau_A=\mu_A+\nu_A$ and he shows that 
$2/\sqrt{\lambda_A}<\tau_A<\lambda_A+\lambda_A^{-2}$. Since the eigenvalues of $A^{-1}$ are $\lambda_A^{-1}$, 
$\mu_A^{-1}=\lambda_A\nu_A$ and $\nu_A^{-1}=\lambda_A\mu_A$ we see
$$
{\rm tr}(A)=\lambda_A+\tau_A,\quad {\rm tr}(A^{-1})=\lambda_A^{-1}+\lambda_A\tau_A.
$$
It is easy to see that the Jacobian of the map $(\lambda_A,\tau_A)\longmapsto\bigl({\rm tr}(A),{\rm tr}(A^{-1})\bigr)$
is zero if and only if $\tau_A=\lambda_A+\lambda_A^{-2}$. Hence when $\tau_A<\lambda_A+\lambda_A^{-2}$ there is a 
bijection between the parameters $\bigl({\rm tr}(A),{\rm tr}(A^{-1})\bigr)$ and $(\lambda_A,\tau_A)$. The inverse map
can be constructed by solving the characteristic polynomial of $A$, whose coefficients are determined by
${\rm tr}(A)$ and ${\rm tr}(A^{-1})$.

Now consider a triple of loxodromic maps $A,B,C$ in ${\rm SL}(3,{\mathbb R})$ with positive eigenvalues and satisfying
$CBA=I$. Goldman paramerises this triple by $(\lambda_A,\tau_A,\lambda_B,\tau_B,\lambda_C,\tau_C)$, which he calls
boundary invariants, and two further parameters $s$ and $t$, called internal parameters. 
Goldman's internal parameter $s$ is invariant under cyclic permutation
of $A$, $B$, $C$, but $t$ is not. Zhang \cite{Zhang2} remedied this by defining a parameter $r$. We will relate our 
parameters $\sigma_\pm$ with Zhang's parameters $s$ and $r$. 

Let ${\bf r}_A$, ${\bf r}_B$ and ${\bf r}_C$ be vectors in ${\mathbb R}^3$ corresponding to the repelling fixed points 
of $A$, $B$ and $C$. The parameter $s$ may be expressed in terms of certain ${\rm SL}(3,{\mathbb R})$ invariant
cross-ratios denoted $(a,b,c,d)_e$ as follows, see Section 4 of \cite{Gol} or equation~2.2 of \cite{Zhang2}.
\begin{eqnarray*}
\rho_A(s) := \bigl(A^{-1}{\bf r}_B,{\bf r}_C,{\bf r}_B,A{\bf r}_C\bigr)_{{\bf r}_A}
& = & 1+\sqrt{\frac{\lambda_C\lambda_A}{\lambda_B}}\tau_As+\frac{\lambda_C}{\lambda_B}s^2, \\
\rho_B(s) := \bigl(B^{-1}{\bf r}_C,{\bf r}_A,{\bf r}_C,B{\bf r}_A\Bigr)_{{\bf r}_B}
& = & 1+\sqrt{\frac{\lambda_A\lambda_B}{\lambda_C}}\tau_Bs+\frac{\lambda_A}{\lambda_C}s^2, \\
\rho_C(s) := \bigl(C^{-1}{\bf r}_A,{\bf r}_B,{\bf r}_A,C{\bf r}_B\Bigr)_{{\bf r}_C}
& = & 1+\sqrt{\frac{\lambda_B\lambda_C}{\lambda_A}}\tau_As+\frac{\lambda_C}{\lambda_B}s^2, \\
\end{eqnarray*}
This defines the internal parameter $s$. Following Zhang, Proposition~2.19 of \cite{Zhang2}, we define
the internal parameter $r$ by 
\begin{eqnarray*}
r & = & \Bigl(\bigl(B^{-1}{\bf r}_C,{\bf r}_B,B{\bf r}_A,{\bf r}_C\bigr)_{{\bf r}_A}-1\Bigr)
\Bigl(C^{-1}{\bf r}_A,{\bf r}_B,{\bf r}_A,C{\bf r}_B\Bigr)_{{\bf r}_C} \\
& = & \Bigl(\bigl(A^{-1}{\bf r}_B,{\bf r}_A,A{\bf r}_C,{\bf r}_B\bigr)_{{\bf r}_C}-1\Bigr)
\Bigl(B^{-1}{\bf r}_C,{\bf r}_A,{\bf r}_C,B{\bf r}_A\Bigr)_{{\bf r}_B} \\
& = & \Bigl(\bigl(C^{-1}{\bf r}_A,{\bf r}_C,C{\bf r}_B,{\bf r}_A\bigr)_{{\bf r}_B}-1\Bigr)
\Bigl(A^{-1}{\bf r}_B,{\bf r}_C,{\bf r}_B,A{\bf r}_C\Bigr)_{{\bf r}_A}.
\end{eqnarray*}
Note that Goldman's internal parameter $t$ is given by $t=r/\rho_B(s)$.

In \cite{Gol} Goldman gives implicit matrices for the representation of the three boundary elements of a pair of pants 
$Y$, this matrices are
\begin{eqnarray*}
A & = & \left(\begin{matrix}
\alpha_1 & \alpha_1a_2+\gamma_1a_3c_2 & \gamma_1a_3 \\
0 & -\beta_1+\gamma_1b_3c_2 & \gamma_1b_3 \\
0 & -\gamma_1c_2 & -\gamma_1 \end{matrix}\right), \\
B & = & \left(\begin{matrix}
-\alpha_2 & 0 & -\alpha_2a_3 \\
\alpha_2b_1 & \beta_2 & \beta_2b_3+\alpha_2a_3b_1 \\
\alpha_2c_1 & 0 & -\gamma_2+\alpha_2a_3c_1 \end{matrix}\right),\\
C & = & \left(\begin{matrix}
-\alpha_3+\beta_3a_2b_1 & \beta_3a_2 & 0 \\
-\beta_3b_1 & -\beta_3 & 0 \\
\gamma_3c_1+\beta_3b_1c_2 & \beta_3c_2 & \gamma_3 \end{matrix}\right) 
\end{eqnarray*}

Where
$$ 
\begin{array}{lll}
\alpha_1\beta_1\gamma_1={\rm det}(A)=1,\quad
\lambda_A=\alpha_1,\quad \tau_A=-\beta_1+\gamma_1(b_3c_2-1), \\
\alpha_2\beta_2\gamma_2={\rm det}(B)=1,\quad
\lambda_B=\beta_2,\quad \tau_B=-\gamma_2+\alpha_2(a_3c_1-1), \\
\alpha_3\beta_3\gamma_3={\rm det}(C)=1,\quad
\lambda_C=\gamma_3,\quad \tau_C=-\alpha_3+\beta_3(a_2b_1-1).
\end{array}
$$
The inverses of $A$, $B$ and $C$ are given by
\begin{eqnarray*}
A^{-1} & = & \left(\begin{matrix}
\alpha_1^{-1} & \beta_1^{-1}a_2 & \alpha_1^{-1}a_3+\beta_1^{-1}a_2b_3 \\
0 & -\beta_1^{-1} & -\beta_1^{-1}b_3 \\
0 & \beta_1^{-1}c_2 & -\gamma_1^{-1}+\beta_1^{-1}b_3c_2 
\end{matrix}\right), \\
B^{-1} & = & \left(\begin{matrix}
-\alpha_2^{-1}+\gamma_2^{-1}a_3c_1 & 0 & \gamma_2^{-1}a_3 \\
\beta_2^{-1}b_1+\gamma_2^{-1}b_3c_1 & \beta_2^{-1} & \gamma_2^{-1}b_3 \\
-\gamma_2^{-1}c_1 & 0 & -\gamma_2^{-1}
\end{matrix}\right), \\
C^{-1} & = & \left(\begin{matrix}
-\alpha_3^{-1} & -\alpha_3^{-1}a_2 & 0 \\
\alpha_3^{-1}b_1 & -\beta_3^{-1}+\alpha_3^{-1}a_2b_1 & 0 \\
\alpha_3^{-1}c_1 & \gamma_3^{-1}c_2+\alpha_3^{-1}a_2c_1 & \gamma_3^{-1}
\end{matrix}\right).
\end{eqnarray*}

Since we have the presentation $CBA=I$ then 
$$
\alpha_1\alpha_2\alpha_3=\beta_1\beta_2\beta_3=\gamma_1\gamma_2\gamma_3=1.
$$  

Changing variables as Goldman does and using Zhang's symmetrised coordinates, we have 
$$
\begin{array}{lll}
\alpha_1=\lambda_A, \quad & 
\displaystyle{\alpha_2=\sqrt{\frac{\lambda_C}{\lambda_A\lambda_B}}\,\frac{1}{s},}\quad &
\displaystyle{\alpha_3=\sqrt{\frac{\lambda_B}{\lambda_A\lambda_C}}\,s, }\\
\displaystyle{\beta_1=\sqrt{\frac{\lambda_C}{\lambda_A\lambda_B}}\,s, }\quad & 
\beta_2=\lambda_B, \quad & 
\displaystyle{\beta_3=\sqrt{\frac{\lambda_A}{\lambda_B\lambda_C}}\,\frac{1}{s},}\\
\displaystyle{\gamma_1=\sqrt{\frac{\lambda_B}{\lambda_A\lambda_C}}\,\frac{1}{s},}\quad &
\displaystyle{\gamma_2=\sqrt{\frac{\lambda_A}{\lambda_B\lambda_C}}\,s,}\quad &
\gamma_3=\lambda_C
\end{array}
$$
and
$$
\begin{array}{lll}
\displaystyle{ a_2 = \frac{r}{\rho_B(s)},}\quad &
a_3=2,\quad &
\displaystyle{ b_1 =\frac{\rho_B(s)\rho_C(s)}{r},} \quad \\
b_3 = 2, \quad & 
\displaystyle{c_1=\frac{\rho_B(s)}{2}}, \quad &
\displaystyle{c_2=\frac{\rho_A(s)}{2}}.
\end{array}
$$
Then a lengthy, but straightforward calculation yields
\begin{eqnarray*}
\sigma_+ & = &  \left(\sqrt{\lambda_A\lambda_B\lambda_C}+\frac{1}{\sqrt{\lambda_A\lambda_B\lambda_C}}\right)s 
+\left(r+\frac{\rho_A(s)\rho_B(s)\rho_C(s)}{r}\right)\frac{1}{s^2} \\
&&\quad +\left(\sqrt{\frac{\lambda_C}{\lambda_B}}\,\sqrt{\lambda_A}\tau_A
+\sqrt{\frac{\lambda_A}{\lambda_C}}\,\sqrt{\lambda_B}\tau_B
+\sqrt{\frac{\lambda_B}{\lambda_A}}\,\sqrt{\lambda_C}\tau_C\right)\frac{1}{s}+\frac{2}{s^2}, \\
\sigma_- & = & \left(\sqrt{\lambda_A\lambda_B\lambda_C}+\frac{1}{\sqrt{\lambda_A\lambda_B\lambda_C}}\right)\frac{1}{s}
+\left(\sqrt{\lambda_A\lambda_B\lambda_C}\, r+\frac{\rho_A(s)\rho_B(s)\rho_C(s)}
{\sqrt{\lambda_A\lambda_B\lambda_C}\,r}\right)\frac{1}{s} \\
&& \quad +\left(\sqrt{\frac{\lambda_B}{\lambda_C}}\,\sqrt{\lambda_A}\tau_A 
+\sqrt{\frac{\lambda_C}{\lambda_A}}\,\sqrt{\lambda_B}\tau_B 
+\sqrt{\frac{\lambda_A}{\lambda_B}}\,\sqrt{\lambda_C}\tau_C\right) s + 2s^2. % \\
\end{eqnarray*}

\bigskip

\section{${\rm SU}(2,1)$-coordinates}\label{sec:SU(2,1)}

In \cite{PP} Parker and Platis constructed Fenchel Nielsen coordinates for surface groups. Much of the construction
we have given in previous sections is modelled on their coordinates. However, there is one big difference. Parker and Platis
did not give coordinates for $\pi_1(Y)$ that are invariant under cyclic permutation of the three boundary curves.
Instead they focussed on two of them, $\alpha$ and $\beta$, represented by $A$ and $B$ respectively. They then used
${\rm tr}(A)$, ${\rm tr}(B)$ and the cross-ratios of the fixed points of $A$ and $B$. In this section we will show
that there is a bijection between our coordinates and the Parker-Platis coordinates.

\subsection{Hermitian forms and ${\rm SU}(2,1)$}

Consider a Hermitian form $\langle \cdot, \cdot \rangle$ on ${\mathbb C}^3$. We can write this form in terms of a matrix
$J$ and we suppose this form has signature $(2,1)$. In what follows, we suppose $J$ is given by \eqref{eq:J}. The group
${\rm SU}(J)$ is the group of matrices with determinant 1 that preserve the form $\langle \cdot,\cdot\rangle$. From this
it follows that any $A$ in ${\rm SU}(J)$ satisfies $A^*JA=J$ where $A^*$ is the conjugate transpose matrix of $A$. 
That is, $A^{-1}=J^{-1}A^*J$. Since ${\rm tr}(A^*)=\overline{{\rm tr}(A)}$, we make
the important observation that ${\rm tr}(A^{-1})=\overline{{\rm tr}(A)}$. Applying Lawton's theorem, we immediately
see that $\rho:\pi_1(Y)\longrightarrow {\rm SU}(J)$ is determined up to conjugation by 
${\rm tr}(A)$, ${\rm tr}(B)$, ${\rm tr}(C)$ and $\sigma_+$ together with a root of the quadratic $Q(Y)$. Since the roots
of the latter are the traces of $[A,B]$ and its inverse, these roots are complex conjugates of each other, see
Parker \cite{Par}.

Suppose that $A\in{\rm SU}(J)$ is loxodromic, that is its eigenvalues $\lambda$, $\mu$, $\nu$ satisfy 
$|\lambda|>|\mu|>|\nu|$ and $\lambda\mu\nu=1$. Now the eigenvalues of $A^{-1}$ are the same as those
of $A^*$. The former are $\lambda^{-1}$, $\mu^{-1}$, $\nu^{-1}$ and those of the latter are 
$\overline{\lambda}$, $\overline{\mu}$ and $\overline{\nu}$. By looking at their absolute values, we immediately
see that $\lambda^{-1}=\overline{\nu}$, $\mu^{-1}=\overline{\mu}$ and $\nu^{-1}=\overline{\lambda}$. Thus,
$\nu=\overline{\lambda}^{-1}$ and $\mu=\lambda^{-1}\nu^{-1}=\lambda^{-1}\overline{\lambda}$. Hence the
trace of $A$ is completely determined by $\lambda$. Indeed, for loxodromic maps there is a bijection between 
${\rm tr}(A)$ and $\lambda$, see Lemma~4.1 of Parker and Platis \cite{PP}.

\subsection{Fenchel-Nielsen coordinates for ${\rm SU}(2,1)$}

It is clear that for representations to ${\rm SU}(2,1)$ the trace parameters satisfy
${\rm tr}(A_j^{-1})=\overline{{\rm tr}(A_j)}$ and the shape invariants satisfy $\sigma_-(Y_k)=\overline{\sigma_+(Y_k)}$. 
Using Lawton's theorem, we see that $\rho\bigl(\pi_1(Y_k)\bigr)$ is determined by these parameters together with a choice of
root of the commutator quadratic $Q(Y_k)$. 

Now consider the twist-bend-bulge-turn parameters. Suppose that $A\in{\rm SU}(2,1)$ is loxodromic
and $K$ is in the centraliser $Z(A)$ of $A=\rho([\alpha])$. Without loss of generality, we can write
$$
A=\left(\begin{matrix} \lambda & 0 & 0 \\ 0 & \lambda^{-1}\overline{\lambda} & 0 \\
0 & 0 & \overline{\lambda}^{-1} \end{matrix}\right),\quad
K=T^uU^v=\left(\begin{matrix} e^{u-v} & 0 & 0 \\ 0 & e^{2v} & 0 \\ 0 & 0 & e^{-u-v} \end{matrix}\right).
$$
The $e^{2v}$ eigenspace of $K$ is the same as the $\lambda^{-1}\overline{\lambda}$ eigenspace of $A$,
which is in $V_+$. Therefore $|e^{2v}|=1$ and so $v$ is purely imaginary. In particular, the bulge 
$s_\Gamma(\alpha)={\rm Re}(v)$ is zero.
Furthermore, we have $e^{-u-v}=\overline{e^{u-v}}^{-1}=e^{-\overline{u}+\overline{v}}=e^{-\overline{u}-v}$.
In turn, this implies that $u=\overline{u}$ and so $u$ is real. In particular the bend 
$\theta_\Gamma(\alpha)={\rm Im}(u)$ is zero. Therefore, 
we only have twist and turn parameters $t_\Gamma(\alpha)={\rm Re}(u)$ and $\phi_\Gamma(\alpha)={\rm Im}(v)$. 
Note that Parker and Platis used the word bend for what we are calling turn.

Thus we have proved that $\rho:\bigl(\pi_1(S_g)\bigr)\longrightarrow {\rm SU}(2,1)$ is determined by
\begin{enumerate}
\item[(1)] $3g-3$ complex trace parameters ${\rm tr}(A_1),\,\ldots,\,{\rm tr}(A_{3g-3})$;
\item[(2)] $2g-2$ complex shape invariants $\sigma_+(Y_1),\,\ldots,\,\sigma_+(Y_{2g-2})$;
\item[(3)] a choice of root for each of the $2g-2$ polynomials $Q(Y_1),\,\ldots,\, Q(Y_{2g-2})$;
\item[(4)] $3g-3$ real twist parameters $t_\Gamma(\gamma_1),\,\ldots,\, t_\Gamma(\gamma_{3g-3})$ and 
$3g-3$ real turn parameters $\phi_\Gamma(\gamma_1),\,\ldots,\, \phi_\Gamma(\gamma_{3g-3})$.
\end{enumerate}
This proves Theorem~\ref{thm:main-SU(2,1)}. 

Moreover, consider a subgroup $\langle A,B,C:CBA=I\rangle$ of ${\rm SL}(3,{\mathbb C})$ where 
${\rm tr}(A^{-1})=\overline{{\rm tr}(A)}$, ${\rm tr}(B^{-1})=\overline{{\rm tr}(B)}$, ${\rm tr}(C^{-1})=\overline{{\rm tr}(C)}$
and $\sigma_-=\overline{\sigma_+}$. 
Using Lawton's theorem all traces in this group are determined by functions of these traces satisfy
${\rm tr}(W^{-1})=\overline{{\rm tr}(A)}$. Hence, using Acosta's theorem we see that the group is conjugate to a subgroup
of ${\rm SU}(2,1)$. This proves Theorem~\ref{thm-pants} (3).

\subsection{Parker-Platis coordinates}

It remains to discuss the relationship between our method of parametrisation of 
$\rho\bigl(\pi_1(Y)\bigr)=\langle A,B,C\,:\,CBA=I\rangle$ and that of Parker and Platis. 

The main difference between our parameterisation and that of Parker and Platis is that they use cross-ratios. 
Suppose that ${\bf r}_A$, ${\bf a}_A$ be repulsive and attractive eigenvectors of $A$ and 
${\bf r}_B$, ${\bf a}_B$ be repulsive and attractive eigenvectors of $B$ respetively. 
Following Section~6.1 of Parker and Platis \cite{PP} we define three cross-ratios associated to $A$ and $B$ as follows
\begin{eqnarray}
{\mathbb X}_1 & = & \frac{\langle{\bf r}_A,{\bf a}_B\rangle\langle{\bf r}_B,{\bf a}_A\rangle}
{\langle{\bf r}_B,{\bf a}_B\rangle\langle{\bf r}_A,{\bf a}_A\rangle}, \label{eq:X1} \\
{\mathbb X}_2 & = & \frac{\langle{\bf a}_A,{\bf a}_B\rangle\langle{\bf r}_B,{\bf r}_A\rangle}
{\langle{\bf r}_B,{\bf a}_B\rangle\langle{\bf a}_A,{\bf r}_A\rangle}, \label{eq:X2} \\
{\mathbb X}_3 & = & \frac{\langle{\bf a}_B,{\bf a}_A\rangle\langle{\bf r}_B,{\bf r}_A\rangle}
{\langle{\bf r}_B,{\bf a}_A\rangle\langle{\bf a}_B,{\bf r}_A\rangle}. \label{eq:X3}
\end{eqnarray}
Falbel \cite{Falbel} showed they satisfy the following equations, see also Proposition~5.2 of \cite{PP}
\begin{eqnarray*}
|{\mathbb X}_2| & = & |{\mathbb X}_1|\,|{\mathbb X}_3|, \\
2|{\mathbb X}_1|^2{\rm Re}({\mathbb X}_3) 
& = & |{\mathbb X}_1|^2+|{\mathbb X}_2|^2+1-2{\rm Re}({\mathbb X}_1+{\mathbb X}_2).
\end{eqnarray*}
Note that these two equations determine $|{\mathbb X}_3|$ and ${\rm Re}({\mathbb X}_3)$ in terms of
${\mathbb X}_1$ and ${\mathbb X}_2$. But there remains an ambiguity in the choice of sign of
${\rm Im}({\mathbb X}_3)$.

In \cite{PP} Parker and Platis use $(\lambda_A,\lambda_B,{\mathbb X}_1,{\mathbb X}_2,{\mathbb X}_3)$ to 
parametrise $\rho\bigl(\pi_1(Y)\bigr)=\langle A,B\rangle$.

\begin{proposition}
Suppose that $A$ and $B$ are loxodromic elements of ${\rm SU}(J)$. 
Let ${\mathbb X}_j$ for $j=1,2,3$ be the
cross-ratios of their eigenvectors as defined by \eqref{eq:X1}, \eqref{eq:X2}, \eqref{eq:X3}. Write $C=(AB)^{-1}$ and 
$\sigma_+={\rm tr}(A^{-1}B)-{\rm tr}(A^{-1}){\rm tr}(B)$. 
There is a bijection depending only on the eigenvalues of $A$ and $B$ between 
$(\lambda_A,\lambda_B,{\mathbb X}_1,{\mathbb X}_2)$ and $({\rm tr}(A),{\rm tr}(B),{\rm tr}(C),\sigma_+)$. 
Moreover, fixing the other parameters,
the sign of the imaginary part of ${\mathbb X}_3$ is determined by the choice of a root of the commutator quadratic $Q$.
\end{proposition}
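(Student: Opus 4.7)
The plan is to reduce the bijection to the two-parameter case (with eigenvalues fixed) and then use an explicit matrix normalisation. As noted before the proposition, Lemma~4.1 of Parker and Platis gives a bijection between $\lambda_A$ and ${\rm tr}(A)$ for loxodromic elements of ${\rm SU}(J)$, and similarly for $B$. It therefore suffices, with the eigenvalues of $A$ and $B$ fixed, to establish a bijection between $({\mathbb X}_1,{\mathbb X}_2)$ and $\bigl({\rm tr}(C),\sigma_+\bigr)$.

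To build this bijection I will conjugate so that $A$ is diagonal in the basis $\{{\bf r}_A,{\bf p},{\bf a}_A\}$ where ${\bf p}$ is the third eigenvector, with the Hermitian form $\langle\cdot,\cdot\rangle$ given by $J$ in \eqref{eq:J}. With the usual normalisation $\langle{\bf r}_A,{\bf a}_A\rangle=1$, the matrix of $A$ depends only on $\lambda_A$. Next, the eigenvectors ${\bf r}_B$ and ${\bf a}_B$ of $B$ are null vectors (since $B$ is loxodromic in ${\rm SU}(J)$), and after scaling I can write them as explicit column vectors whose entries are algebraic expressions in ${\mathbb X}_1,{\mathbb X}_2$ and the eigenvalues; the constraint $\langle{\bf r}_B,{\bf r}_B\rangle=\langle{\bf a}_B,{\bf a}_B\rangle=0$ pins down the first and last entries in terms of the middle entry, and then the definitions \eqref{eq:X1} and \eqref{eq:X2} determine everything up to the remaining scaling freedom. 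Since the middle eigenspace of $B$ is orthogonal to both null eigenvectors, this suffices to determine $B$.

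Once $A$ and $B$ are written down explicitly, I compute ${\rm tr}(AB)$ and ${\rm tr}(A^{-1}B)$ as rational functions of $\lambda_A,\lambda_B,{\mathbb X}_1,{\mathbb X}_2$. Since ${\rm tr}(C)=\overline{{\rm tr}(C^{-1})}=\overline{{\rm tr}(AB)}$ and
$$
\sigma_+={\rm tr}(A^{-1}B)-{\rm tr}(A^{-1}){\rm tr}(B),
$$
this yields the forward map $({\mathbb X}_1,{\mathbb X}_2)\mapsto({\rm tr}(C),\sigma_+)$. The inverse is obtained by solving these two (generically linear in one of the variables after clearing denominators) equations for ${\mathbb X}_1$ and ${\mathbb X}_2$; the fact that Lawton's theorem, Theorem~\ref{thm-lawton-new}, together with the irreducibility of $\langle A,B\rangle$ (guaranteed by the cross-ratios being generic), forces the map to be injective gives the bijection.

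For the second assertion, recall from the discussion in Section~6 that $|{\mathbb X}_3|$ and ${\rm Re}({\mathbb X}_3)$ are determined by ${\mathbb X}_1$ and ${\mathbb X}_2$ through Falbel's identities, so the only remaining data in ${\mathbb X}_3$ is the sign of ${\rm Im}({\mathbb X}_3)$. On the other hand, for $\rho(\pi_1(Y))\subset{\rm SU}(2,1)$ we have ${\rm tr}[B,A]=\overline{{\rm tr}[A,B]}$, so the two roots of the commutator quadratic $Q$ from \eqref{eqn:Q-y} are complex conjugates and selecting one of them is equivalent to selecting the sign of ${\rm Im}\bigl({\rm tr}[A,B]\bigr)$. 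A direct computation of ${\rm tr}[A,B]$ using the explicit matrices above expresses it as a polynomial in the cross-ratios whose imaginary part is an odd, nonzero function of ${\rm Im}({\mathbb X}_3)$ once ${\mathbb X}_1,{\mathbb X}_2$ and the eigenvalues are fixed. Hence the sign of ${\rm Im}({\mathbb X}_3)$ matches the choice of root of $Q$. The main obstacle is the bookkeeping of this last trace computation: one must verify non-degeneracy of the dependence on ${\rm Im}({\mathbb X}_3)$ away from the reducible locus, which is where both roots of $Q$ coincide and the sign ambiguity genuinely disappears.
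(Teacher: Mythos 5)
Your strategy (explicit normal forms after diagonalising $A$, then direct trace computations) could in principle work, but as written it has a genuine gap at its first step. You claim that, once $A$ is diagonalised and the nullity conditions are imposed, the eigenvectors ${\bf r}_B,{\bf a}_B$ are determined by ${\mathbb X}_1,{\mathbb X}_2$ ``up to the remaining scaling freedom''. They are not: with the eigenvalues fixed, the configuration of the four fixed points modulo the stabiliser of the eigenbasis of $A$ has four real moduli, and a given pair $({\mathbb X}_1,{\mathbb X}_2)$ is generically realised by \emph{two} inequivalent configurations, distinguished exactly by the sign of ${\rm Im}({\mathbb X}_3)$ --- which is the content of the second assertion of the proposition. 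So your recipe produces $B$ only up to a branch choice, and you must prove that ${\rm tr}(AB)$ and ${\rm tr}(A^{-1}B)$ do not depend on that choice before your forward map $({\mathbb X}_1,{\mathbb X}_2)\mapsto\bigl({\rm tr}(C),\sigma_+\bigr)$ is even well defined. That well-definedness is precisely what the paper imports from Proposition~7.6 of Parker--Platis, which expresses ${\rm tr}(BA)$, ${\rm tr}(A^{-1}B^{-1})$, ${\rm tr}(A^{-1}B)$, ${\rm tr}(B^{-1}A)$ as real-linear combinations of ${\mathbb X}_1,\overline{\mathbb X}_1,{\mathbb X}_2,\overline{\mathbb X}_2$ with coefficients depending only on the eigenvalues. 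Your injectivity argument is also circular: Lawton's theorem determines the group from the traces \emph{together with} a root of $Q$, so two groups sharing $({\rm tr}(A),{\rm tr}(B),{\rm tr}(C),\sigma_+)$ but with different roots need not be conjugate, and to conclude that they nevertheless share $({\mathbb X}_1,{\mathbb X}_2)$ you would need the very statement about ${\mathbb X}_3$ you have not yet proved. Note too that the traces are not holomorphic in the cross-ratios (they involve $\overline{\mathbb X}_1,\overline{\mathbb X}_2$), so ``solving two equations, generically linear after clearing denominators'' is not the right picture; the paper instead inverts the $4\times4$ real-linear system above, whose determinant is $(\lambda_A-\nu_A)^2(\lambda_A-\mu_A)^2(\nu_A-\mu_A)^2(\lambda_B-\nu_B)^2(\lambda_B-\mu_B)^2(\nu_B-\mu_B)^2\neq0$ whenever $A,B$ are loxodromic --- no genericity hypothesis is needed, and invertibility gives the bijection in both directions at once.

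The second half of your argument is asserted rather than proved: the claim that ${\rm Im}\bigl({\rm tr}[A,B]\bigr)$ is an odd, nonvanishing function of ${\rm Im}({\mathbb X}_3)$ is exactly the content of Corollary~6.5 of Parker--Platis, which the paper quotes in the form ${\mathbb X}_3=\bigl(F(\lambda_A,\lambda_B,{\mathbb X}_1,{\mathbb X}_2)-{\rm tr}[A,B]\bigr)$ divided by a positive real quantity, with $F$ real valued; predicting that a computation will come out this way is not a proof, and the required non-degeneracy is the whole point. Finally, you identify the repeated-root locus of $Q$ with the reducible locus, which contradicts the remark following Theorem~\ref{thm-lawton}: irreducible groups can lie in the branch locus (for instance those conjugate into ${\rm SO}(3;{\mathbb C})$, and in the ${\rm SU}(2,1)$ setting any group with ${\rm tr}[A,B]$ real), so the degenerate case you set aside is not confined to reducible representations.
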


\begin{proof}
Write the eigenvalues of $A$ and $B$ as 
$\lambda_A$, $\mu_A$ and $\nu_A$ and $\lambda_B$, $\mu_B$ and $\nu_B$ with $|\lambda_A|>|\mu_A|>|\nu_A|$ and 
$|\lambda_B|>|\mu_B|>|\nu_B|$. 

First, the eigenvalues of $A$ are the roots of the characteristic polynomial 
$$
\chi_A(x)=x^3-{\rm tr}(A)x^2+\overline{{\rm tr}(A)}x-1.
$$
Thus there is a bijection between ${\rm tr}(A)$ and the ordered set eigenvalues of $A$. 
Now suppose that $A\in{\rm SU}(J)$ is loxodromic. Since we have
$\mu_A=\lambda_A^{-1}\overline{\lambda}_A$ and $\nu_A=\overline{\lambda}_A^{-1}$, we see that there
is a bijection between the set of possible values of ${\rm tr}(A)$ and the set of possible values of 
$\lambda_A$, see Lemma~4.1 of Parker and Platis.

We know that ${\rm tr}(C)={\rm tr}(A^{-1}B^{-1})$ and ${\rm tr}(C^{-1})={\rm tr}(BA)$. Also
$$
\sigma_-={\rm tr}(B^{-1}A)-{\rm tr}(B^{-1}){\rm tr}(A)=\overline{\sigma}_+.
$$ 
Therefore it suffices to show there
is a bijection between the two sets 
$({\mathbb X}_1,\overline{\mathbb X}_1,{\mathbb X}_2,\overline{\mathbb X}_2)$ and
$({\rm tr}(BA),{\rm tr}(A^{-1}B^{-1}),{\rm tr}(A^{-1}B),{\rm tr}(AB^{-1}))$. 
 
As above, write the eigenvalues of $A$ and $B$ as 
$\lambda_A$, $\mu_A$ and $\nu_A$ and $\lambda_B$, $\mu_B$ and $\nu_B$ with $|\mu_A|=|\mu_B|=1$.
Then from Proposition 7.6 of Parker-Platis \cite{PP} we have
\begin{eqnarray*}
\lefteqn{{\rm tr}(BA)-(\lambda_A+\nu_A)\mu_B-\mu_A(\lambda_B+\nu_B)+\mu_A\mu_B} \\
& = & (\nu_A-\mu_A)(\nu_B-\mu_B){\mathbb X}_1 
+(\lambda_A-\mu_A)(\lambda_B-\mu_B)\overline{\mathbb X}_1 \\
&& \quad +(\lambda_A-\mu_A)(\nu_B-\mu_B){\mathbb X}_2
+(\nu_A-\mu_A)(\lambda_B-\mu_B)\overline{\mathbb X}_2, \\
\lefteqn{{\rm tr}(A^{-1}B^{-1})-(\lambda_A^{-1}+\nu_A^{-1})\mu_B^{-1}
-\mu_A^{-1}(\lambda_B^{-1}+\nu_B^{-1})+\mu_A^{-1}\mu_B} \\
& = & (\nu_A^{-1}-\mu_A^{-1})(\nu_B^{-1}-\mu_B^{-1}){\mathbb X}_1 
+(\lambda_A^{-1}-\mu_A^{-1})(\lambda_B^{-1}-\mu_B^{-1})\overline{\mathbb X}_1 \\
&& \quad +(\lambda_A^{-1}-\mu_A^{-1})(\nu_B^{-1}-\mu_B^{-1}){\mathbb X}_2
+(\nu_A^{-1}-\mu_A^{-1})(\lambda_B^{-1}-\mu_B^{-1})\overline{\mathbb X}_2, \\
\lefteqn{{\rm tr}(A^{-1}B)-(\lambda_A^{-1}+\nu_A^{-1})\mu_B-\mu_A^{-1}(\lambda_B+\nu_B)+\mu_A^{-1}\mu_B} \\
& = & (\nu_A^{-1}-\mu_A^{-1})(\nu_B-\mu_B){\mathbb X}_1 
+(\lambda_A^{-1}-\mu_A^{-1})(\lambda_B-\mu_B)\overline{\mathbb X}_1 \\
&& \quad +(\lambda_A^{-1}-\mu_A^{-1})(\nu_B-\mu_B){\mathbb X}_2
+(\nu_A^{-1}-\mu_A^{-1})(\lambda_B-\mu_B)\overline{\mathbb X}_2, \\
\lefteqn{{\rm tr}(B^{-1}A)-(\lambda_A+\nu_A)\mu_B^{-1}-\mu_A(\lambda_B^{-1}+\nu_B^{-1})+\mu_A\mu_B^{-1}} \\
& = & (\nu_A-\mu_A)(\nu_B^{-1}-\mu_B^{-1}){\mathbb X}_1 
+(\lambda_A-\mu_A)(\lambda_B^{-1}-\mu_B^{-1})\overline{\mathbb X}_1 \\
&& \quad +(\lambda_A-\mu_A)(\nu_B^{-1}-\mu_B^{-1}){\mathbb X}_2
+(\nu_A-\mu_A)(\lambda_B^{-1}-\mu_B^{-1})\overline{\mathbb X}_2.
\end{eqnarray*}
This forms a set of linear equations in ${\mathbb X}_1$, $\overline{\mathbb X}_1$, 
${\mathbb X}_2$ and $\overline{\mathbb X}_2$. We can solve for the cross-ratios provided the determinant of
the corresponding matrix is non-zero. A brief calculation shows that this determinant is
\begin{eqnarray*}
\Delta & = & \Bigl( (\lambda_A-\mu_A)(\nu_A^{-1}-\mu_A^{-1})-(\nu_A-\mu_A)(\lambda^{-1}_A-\mu^{-1}_A)\Bigr)^2 \\
&& \quad \cdot \Bigl( (\lambda_B-\mu_B)(\nu_B^{-1}-\mu_B^{-1})-(\nu_B-\mu_B)(\lambda^{-1}_B-\mu^{-1}_B)\Bigr)^2 \\
& = & (\lambda_A-\nu_A)^2(\lambda_A-\mu_A)^2(\nu_A-\mu_A)^2
(\lambda_B-\nu_B)^2(\lambda_B-\mu_B)^2(\nu_B-\mu_B)^2.
\end{eqnarray*}
On the last line we used $\lambda_A\mu_A\nu_A=\lambda_B\mu_B\nu_B=1$. Since $A$ and $B$ were assumed
to be loxodromic we see they do not have repeated eigenvalues, and hence $\Delta\neq 0$.

Furthermore, given $(\lambda_A,\lambda_B,{\mathbb X}_1,{\mathbb X}_2)$, or equivalently
$({\rm tr}(A),{\rm tr}(B),{\rm tr}(C),\sigma_+)$, using Corollary 6.5 of \cite{PP}, we have
$$
{\mathbb X}_3 = \frac{F(\lambda_A,\lambda_B,{\mathbb X}_1,{\mathbb X}_2)-{\rm tr}[A,B]}
{|{\mathbb X}_1|^2|\lambda_A|^2|\nu_A-\mu_A|^2|\lambda_A|^2|\nu_B-\mu_B|^2}
$$
where $F(\lambda_A,\lambda_B,{\mathbb X}_1,{\mathbb X}_2)$ is a real valued, real analytic function.
Thus, the ambiguity in the roots of the commutator equation is the same as the
ambiguity in the sign of the imaginary part of ${\mathbb X}_3$. This completes the proof.
\end{proof}

\medskip

\end{document}